\newcommand     {\comment}[1]   {}
 \newcommand{\mute}[2] {}
\newcommand     {\printname}[1] {}
\newcommand{\labell}[1] {\label{#1}}
\numberwithin{equation}{section}
\newtheorem {Theorem}                   {Theorem}
\newtheorem*{Theorem*}                   {Theorem}
\newtheorem {refTheorem}[equation]      {Theorem}
\newtheorem {Lemma}[equation]           {Lemma}
\newtheorem* {Lemma*}                    {Lemma}
\newtheorem {Corollary} [equation]      {Corollary}
\newtheorem* {Corollary*}                {Corollary}
\newtheorem {Proposition} [equation]    {Proposition}
\theoremstyle{definition}
\newtheorem{Definition}[equation]{Definition}
\newtheorem*{Definition*}{Definition}
\theoremstyle{remark}
\newtheorem{Remark}[equation]{Remark}
\newtheorem*{Remark*}{Remark}
\newtheorem*{Example*}{Example}
\long\def\symbolfootnote[#1]#2{\begingroup%
\def\thefootnote{\fnsymbol{footnote}}\footnote[#1]{#2}\endgroup} 
\def    \C             {{\mathbb C}}
\def    \R             {{\mathbb R}}
\def    \cO             {{\mathcal O}}
\def    \cE             {{\mathcal E}}
\def    \cL             {{\mathcal L}}
\def \Hat 	{\widehat}
\def \ssetminus	{\smallsetminus}
\def \del	{{\partial}}
\def \delbar	{\overline{\partial}}
\def \Tilde	{\widetilde}
\def \P	{\mathbb P}
\def    \inv    {^{-1}}
\def    \cut  {{\operatorname{cut}}}
\def	\modt	{{/\!/}_{\! t}\,  S^1}
\def	\modc	{{/\!/}_{\! c_+}\,  S^1}
\def	\modt	{{/\!/}_{\! t}\,  S^1}
\def    \CP	{{\mathbb C}{\mathbb P}}
\def    \R	{{\mathbb R}}
\def    \C	{{\mathbb C}}
\def    \Z       {{\mathbb Z}}
\def    \U      {{\operatorname{U}}}
\def    \tomega       {{\widetilde{\omega}}}
\def    \tepsilon       {{\widetilde{\epsilon}}}
\def    \tPsi       {{\widetilde{\Psi}}}
\def    \tM       {{\widetilde{M}}}
\def    \tJ       {{\widetilde{J}}}
\def    \tsigma       {{\widetilde{\sigma}}}
\begin{document}

\title[Non-Hamiltonian actions]
{Non-Hamiltonian actions with fewer isolated fixed points}

\author[Donghoon Jang]{Donghoon Jang}
\address{Department of Mathematics, Pusan National University, Pusan, Korea}
\email{donghoonjang@pusan.ac.kr}

\author[Susan Tolman]{Susan Tolman}
\address{Department of Mathematics, University of Illinois at Urbana-Champaign,
 Urbana, IL 61801}
\email{stolman@math.uiuc.edu}

\thanks{\emph{2010 Mathematics Subject Classification}.
Primary 53D20, 53D35, 37J15.  Secondary 14J28, 57S15.}
\thanks{Donghoon Jang is supported by Basic Science Research Program through the National Research Foundation of Korea(NRF) funded by the Ministry of Education(2018R1D1A1B07049511). Susan Tolman is partially supported by National Science Foundation Grant DMS-1206365, and Simons Foundation Collaboration Grant 637996.}

\begin{abstract}
In an earlier paper, the second author resolved a question of McDuff by constructing a non-Hamiltonian symplectic circle action on a closed, connected six-dimensional symplectic manifold with exactly 32 fixed points.
In this paper, we improve on this example by reducing the number of fixed points.
More concretely, we construct a non-Hamiltonian symplectic circle action 
on a closed, connected six-dimensional symplectic manifold 
with exactly $2k$ fixed points for any $k \geq 5$.
 \end{abstract}

\maketitle
\section{Introduction}

Let $(M,\omega)$ be a (non-empty) closed, connected $2n$-dimensional symplectic manifold.
Let the circle $S^1 \simeq \R/\Z$ act on $M$,
inducing a vector field\footnote{In this paper, we always take the vector field associated to a primitive lattice element in the Lie algebra of $S^1$.}
$\xi_M$ on $M$.
Assume that the action preserves the symplectic form, that is, the action is  {\bf symplectic}.
Equivalently, assume that $\iota_{\xi_M} \omega$
is closed.  
The {\bf (isotropy) weights} at a fixed point $p \in M$ are the elements of the
unique multiset of  integers $\{w_1,\dots,w_n\}$ so that the symplectic representation of $S^1$ on $T_p M$
is isomorphic to the representation on $\big(\C^n, \frac{\sqrt{-1}}{2} \sum_i d z_i \wedge d \overline{z}_i \big) $ given by 
$\lambda \cdot z = (\lambda^{w_1} z_1,\dots,\lambda^{w_n} z_n)$.

The action is {\bf Hamiltonian} if there exists a {\bf moment map}, that is, a map
$\Psi \colon M \to \R$ satisfying
$$d \Psi = - \iota_{\xi_M} \omega.$$
Equivalently, the action is Hamiltonian if  $\iota_{\xi_M} \omega$ is exact.
In this case, we can reduce the number of degrees of freedom by passing
to the {\bf reduced space} $M \modt := \Psi^{-1}(t)/S^1$, which is 
a $(2n-2)$-dimensional symplectic orbifold for all regular values $t$  of $\Psi$.
Additionally, the moment map $\Psi$ is a Morse-Bott function whose critical set is the fixed set $M^{S^1}$. The index $\mu_F$ of a fixed component
$F \subset M^{S^1}$ is twice the number of negative weights at any $p \in F$.
This implies, for example,
that $\Psi$ is a perfect Morse function, i.e.,
\begin{equation}\labell{Morse}
\dim H^i(M;\R) = \sum_{F \subset M^{S^1}} \dim H^{i - \mu_F}(F;\R),
\end{equation}
where the sum is over all fixed components \cite{Ki}.
Additionally, since $\Psi$ has a minimum, $M$ must have a fixed point with no negative weights.

This raises a natural question: Are symplectic actions always Hamiltonian?
If $H^1(M;\R) = 0$, then every symplectic action is Hamiltonian because every closed one-form is exact.
In contrast, there are many symplectic circle actions with no fixed points,
including the diagonal action on the torus $S^1 \times S^1$.
These actions are never Hamiltonian.

In 1959, Frankel made significant progress towards understanding which symplectic actions are Hamiltonian
by proving that a K\"ahler circle action on a closed, connected K\"ahler manifold is Hamiltonian exactly
if the fixed set is nonempty \cite{Fr}.
In 1988, McDuff proved that a symplectic circle action on a $4$-dimensional closed, connected symplectic manifold is Hamiltonian exactly if the  fixed set is nonempty \cite{Mc}. Since then,  many authors have proved additional results showing that symplectic actions must be Hamiltonian in a variety of circumstances \cite{CHS}, \cite{Fe}, \cite{G}, \cite{J3}, \cite{Kot}, \cite{LM}, \cite{Li}, \cite{MW}, \cite{O}, \cite{Ro}, \cite{S}, \cite{TWe}; 
see \cite{T} for a brief survey. 
On the other hand, in her 1988 paper McDuff  also constructed
a non-Hamiltonian symplectic circle action on a $6$-dimensional closed, connected symplectic manifold
with fixed tori.
However, for a long time, this was the only known example.
This led to the following more refined question, often known as the ``McDuff conjecture" \cite{MS}: 
Does there exist a non-Hamiltonian symplectic circle action
on a closed, connected symplectic manifold with a non-empty discrete fixed set?
In 2017, the second author answered this question by constructing
a non-Hamiltonian symplectic circle action on a six-dimensional
closed, connected symplectic manifold with exactly 32 isolated fixed points \cite{T}.
As a corollary, examples exist in all dimensions greater than $4$.

This relates to another important question: What is the minimum positive number
of fixed points for a symplectic action on a closed, connected $2n$-dimensional
manifold? If the action is Hamiltonian then
$[\omega^i] \in H^{2i}(M)$ is nonvanishing for all $0 \leq i \leq n$, 
and so \eqref{Morse} implies that $M$ has at least $n+1$ fixed points.
This bound is sharp; generic $S^1$ subgroups of the natural $(S^1)^n/S^1$ action on $\CP^n$ have $n +1$ fixed points.
More generally, by the Atiya-Bott-Berline-Vergne localization theorem,
any circle action that preserves an almost complex structure,
and has at least one fixed point, must have more than one fixed point. 
Additionally, Kosniowski proved that it must have more
than two fixed points if $n \notin \{1,3\}$ \cite{Kos1}, \cite{Kos2}; see also \cite{PT}.
Finally, the first author proved that it cannot have 3 fixed points unless $n=2$ \cite{J1}, \cite{J2}; see also \cite{Ku}.
Kosniowski conjectured that any circle action that preserves an almost complex structure,
and has at least one fixed point must have more than $\frac{n}{2}$ fixed points.
This bound is sharp when $n = 3$;
there's an almost complex structure on $S^6$ which is preserved by an $S^1$ action with two fixed points.
This raises another important question: Is there a (necessarily non-Hamiltonian)
symplectic circle action on a closed $6$-dimensional symplectic
manifold with exactly $2$ fixed points?


In this paper, we show that the number fixed points
in the second author's examples can be substantially reduced.

\begin{Theorem}\label{maintheorem}
Given an integer $k \geq 5$,
 there exists a non-Hamiltonian symplectic circle action on a closed, connected six-dimensional symplectic manifold with exactly $2k$ fixed points.
\end{Theorem}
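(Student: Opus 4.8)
The plan is to exploit the fact that, although a non-Hamiltonian symplectic action admits no $\R$-valued moment map, one can arrange for the closed one-form $\iota_{\xi_M}\omega$ to have integral periods, so that it integrates to a \emph{circle-valued} moment map $\Phi \colon M \to \R/\Z$. The obstruction to being Hamiltonian is then precisely the condition that $[\iota_{\xi_M}\omega] \in H^1(M;\R)$ be nonzero, i.e. that $\Phi$ have nonzero period. Just as in the Hamiltonian theory, $\Phi$ is a circle-valued Morse--Bott function whose critical set is $M^{S^1}$, and at each regular value $t$ the reduced space $M\modt = \Phi^{-1}(t)/S^1$ is a closed symplectic $4$-orbifold. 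The essential difference from the Hamiltonian case is that $\Phi$ has \emph{no} global minimum or maximum: every fixed point is interior, so, since the fixed points are to be isolated, each carries exactly one or two negative weights, i.e. has index $2$ or index $4$.

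First I would reduce the problem to prescribing this circle of reduced spaces together with the wall-crossing data. Crossing a regular level that contains a single index-$2$ fixed point replaces $M\modt$ by its symplectic blow-up at a point, while crossing an index-$4$ fixed point performs the inverse blow-down; between critical values the reduced space is unchanged up to an isotopy rescaling the symplectic form. Traversing $\R/\Z$ once must return the reduced space to itself, and since a blow-up raises the second Betti number by one while a blow-down lowers it, this forces the number of index-$2$ fixed points to equal the number of index-$4$ fixed points. Arranging exactly $k$ of each therefore yields exactly $2k$ fixed points, which explains why the count is even.

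Concretely, I would fix a convenient generic reduced space---a suitable rational symplectic $4$-manifold obtained from $\CP^2$ or a ruled surface by a controlled number of blow-ups---and then specify a cyclic sequence of $k$ blow-ups and $k$ blow-downs, at prescribed points and with prescribed exceptional areas, as $t$ runs around $\R/\Z$. From this combinatorial-symplectic data one reconstructs $M$ by the standard cut-and-reglue procedure built from the operation $\Phicut$: over each arc between consecutive critical values one assembles the trivial part of the gradient-sphere fibration, and at each critical value one inserts the local equivariant model of an isolated fixed point with the designated weights. The resulting $M$ is a closed, connected symplectic $6$-manifold carrying a symplectic $S^1$-action with precisely the $2k$ prescribed fixed points, and it is non-Hamiltonian because $\Phi$ is built with nonzero period, so that $\iota_{\xi_M}\omega$ is closed but not exact.

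The main obstacle is the global consistency of this cyclic construction. Each blow-up consumes symplectic area from the ambient classes and each blow-down returns it, and the exceptional and fiber classes must keep strictly positive area at every value of $t$ while the whole family closes up after a single loop; this is a system of linear inequalities on the chosen areas and weights whose solvability is genuinely delicate. I expect that a short numerical analysis shows the system is feasible exactly when there are enough walls to route the areas consistently around the circle, which is what produces the threshold $k \geq 5$. The remaining tasks---verifying smoothness of the glued total space, its connectedness, and that the gluing introduces no extra fixed points---should then be routine bookkeeping on top of the local models.
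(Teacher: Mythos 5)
Your general framework---a circle-valued moment map, reduced spaces, wall-crossing, and closing up around $\R/\Z$---is indeed the framework of the paper, but the specific construction you propose cannot work, and the obstruction is structural rather than a matter of delicate bookkeeping. Your wall-crossing claim, that passing an index-$2$ fixed point replaces $M \modt$ by its blow-up at a smooth point, holds only when the weights at that fixed point are $\{-1,1,1\}$; for any other weights the reduced space acquires or loses an orbifold singularity rather than undergoing a smooth blow-up. Since you insist that every reduced space be a smooth rational surface and that every wall-crossing be an ordinary blow-up or blow-down, all $2k$ fixed points must have weights $\pm\{1,1,-1\}$ and the action must be free away from the fixed set; that is, the action is semifree. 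But by the theorem of Tolman and Weitsman \cite{TWe} (cited in this very paper), a semifree symplectic circle action on a closed connected symplectic manifold with non-empty isolated fixed point set is automatically Hamiltonian. So your cyclic family of blow-ups and blow-downs can never close up into a non-Hamiltonian manifold: the system of positivity constraints that you hope ``a short numerical analysis'' will verify is infeasible for every $k$, which is exactly why the McDuff question was hard in the first place.

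The paper's construction is designed precisely to evade this rigidity. The fixed points are given weights $\pm\{2,-1,-1\}$, so each wall-crossing exchanges an isolated $\Z_2$ orbifold singularity of the reduced space for a $(-2)$-curve (an orbifold resolution, not a blow-up at a smooth point), and away from the fixed points the action is only locally free, with $\Z_2$ stabilizers. Consequently the reduced spaces cannot be rational surfaces: they are tame K3 surfaces on one side of each critical level and generalized K3 surfaces with isolated $\Z_2$ singularities on the other, produced via the Kobayashi--Todorov period-map classification, and glued using the uniqueness result of \cite{T} for free actions with K3 reduced spaces. This also corrects your guess about where the threshold $k \geq 5$ comes from; it is not ``routing areas.'' On the smooth K3 side the Duistermaat--Heckman function satisfies $\varphi'' = 2(\eta,\eta)$ with $\eta$ a nonzero class in the even K3 lattice, forcing $\varphi'' \leq -4$ there, while each weight-$\pm\{2,-1,-1\}$ fixed point changes $(\eta,\eta)$ by only $1/2$ upon crossing; to make the other side convex, so that $\varphi$ can close up around the circle, one needs at least $5$ such points per critical level, and embedding the corresponding orthogonal $(-2)$-classes into the K3 lattice caps each level at $9$, whence the decomposition $k = \sum_j k_j$ with $k_j \in \{5,\dots,9\}$ in the paper's proof.
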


In particular, there exists an example with exactly $10$ fixed points,
which we describe in more detail below.
To do so, we
need some additional terminology.

Let's again assume that the circle acts symplectically
on a closed $2n$-dimensional symplectic 
manifold $(M,\omega)$. 
If $[\omega] \in H^2(M;\R)$ is integral, there exists a 
{\bf generalized moment map}, that is, 
a map $\Psi \colon M \to S^1$ satisfying $\Psi^*(dt) = - \iota_{\xi_M} \omega$.
As in the Hamiltonian case, the {\bf reduced space} $M \modt :=
\Psi^{-1}(t)/S^1$ is a $(2n -2)$-dimensional orbifold
endowed with a symplectic form $\omega_t \in \Omega^2(M \modt)$
for all regular values $t$  of $\Psi$. 
The {\bf Duistermaat-Heckman}
function of $M$ is the unique continuous real-valued function $\varphi$ on the
moment image $\Psi(M)$ to $\R$ such that
$$\varphi(t) = \int_{M \modt} \omega_t^{n-1}$$
for all regular values $t$ of $\Psi$ \cite{DH}.

A \textbf{K3 surface} is a  closed, connected complex surface $(\Hat X, \Hat I)$
with $H^1(\Hat X;\mathbb{R})=\{0\}$ and trivial canonical bundle.
A closed complex surface $(X,I)$ with at worst simple singular points and with minimal resolution $q \colon \Hat X \to X$ is a {\bf generalized K3 surface} if $\Hat X$ is a K3 surface; see Section 4 for more details.
In this paper, we focus on generalized K3 surfaces that only
have isolated $\Z_2$ singularities. (When we 
specify the number of such singularities, 
we are implying that the surface is otherwise smooth.)
For example, the quotient of the complex torus $\mathbb{C}^2/(\mathbb{Z}^2+ \sqrt{-1} \, \mathbb{Z}^2)$ by the involution $[z] \to [-z]$ is a generalized K3 surface with 16 isolated $\mathbb{Z}_2$ singularities.
A symplectic form $\sigma \in \Omega^2(\Hat X)$ is {\bf tamed} if $\sigma(v,\Hat I(v)) > 0$ for every
nonzero tangent vector $v$.
In this case, we say that the triple $(\Hat X, \Hat I, \sigma)$ is a {\bf tame K3 surface}.
A tamed symplectic form $\sigma \in \Omega^2(\Hat X)$ is  {\bf K\"ahler} if, additionally, $\sigma(\Hat I(v),\Hat I(w)) = \sigma(v,w)$
for all tangent vectors $v$ and $w$. 

\begin{Theorem} \label{theorem10}
There exists a non-Hamiltonian symplectic circle action on a closed, connected six-dimensional symplectic manifold $(M,\omega)$ 
with a generalized moment map $\Psi \colon M \rightarrow \mathbb{R}/10\Z\simeq S^1$ satisfying the following properties: 
The level sets $\Psi^{-1}(\pm 1)$  each contain 5 fixed points with weights $\pm\{2,-1,-1\}$; otherwise, the action is locally free. The Duistermaat-Heckman function is
\begin{equation*}
\varphi(t)=\begin{cases} 
12-2t^2 & -1  \leq t \leq 1
\\ 
2 + \frac{1}{2}(t-5)^2 & \ \ 1  \leq t \leq 9.
\end{cases}
\end{equation*}
Finally, the reduced space $M\modt$ is diffeomorphic to a generalized K3 surface with $5$ isolated
$\Z_2$ singularities for all $t \in (1,9)$, and symplectomorphic to a tame K3 surface for all $t \in (-1,1)$. \end{Theorem}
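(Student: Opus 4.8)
The plan is to construct $M$ by a reduction-and-gluing procedure: first build a compact Hamiltonian $S^1$-manifold with an $\R$-valued moment map over an interval whose two reduced ends are symplectomorphic, and then glue those ends together to obtain a genuinely $S^1$-valued generalized moment map. Concretely, I would unroll the target circle $\R/10\Z$ at the regular value $t=0$ into the interval $[0,10]$ and aim to produce a family of symplectic $4$-orbifolds $(M\modt,\omega_t)$ serving as the reduced spaces: tame K3 surfaces on the smooth range and generalized K3 surfaces with five $\Z_2$ singularities for $t\in(1,9)$. The two critical levels $t=1$ and $t=9\equiv -1$ are where the five-plus-five fixed points sit, and crossing them is exactly where the reduced space changes topological type.

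The key local input is the wall-crossing dictionary at an isolated fixed point. In an equivariant Darboux chart a fixed point with weights $\{2,-1,-1\}$ has local moment map $\half\left(2|z_1|^2-|z_2|^2-|z_3|^2\right)$. For $t$ just below the critical value the reduction lives where $(z_2,z_3)\neq 0$ and $S^1$ acts freely, so the reduced space is smooth there; for $t$ just above it the reduction lives where $z_1\neq 0$ and the stabilizer is $\Z_2$ (the weight on $z_1$ is $2$, and $-1\in S^1$ then acts on $(z_2,z_3)$ by $(-z_2,-z_3)$), producing an isolated $\Z_2$ orbifold point. Thus crossing one such fixed point contracts a $(-2)$-sphere $\CP^1$ to a simple $\Z_2$ singularity, the inverse of the minimal resolution appearing in the definition of a generalized K3 surface. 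With five fixed points on the level $t=1$ this simultaneously contracts five disjoint $(-2)$-spheres, turning the tame K3 surface into a generalized K3 surface with five isolated $\Z_2$ singularities, while the reverse crossing at $t=9$ resolves them.

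Next I would pin down the symplectic cohomology data so that the total space and its Duistermaat-Heckman function come out as claimed. On each regular interval the reduced class varies affinely, $[\omega_t]=[\omega_{t_0}]+(t-t_0)\,e$, where $e$ is the Euler class of the reduction circle bundle, so $\varphi(t)=\int_{M\modt}\omega_t^2$ is quadratic with leading coefficient $\int e^2$. Matching the stated formula forces $\int e^2=-2$ on the smooth range and $\int e^2=\half$ (computed with orbifold multiplicities) on the singular range, so the jump $\tfrac{5}{2}$ in $\int e^2$ across $t=1$ must be accounted for by the five crossings, each contributing $\half=1/|2|$ through its single transverse weight. I would check this against the matching of values and first derivatives at the two critical levels together with the periodicity of $\varphi$ on $\R/10\Z$, reconstruct the total space over each interval as the $S^1$-bundle over the reduced space carrying the minimal-coupling (Sternberg) symplectic form, insert the standard $\C^3$ local models near the ten fixed points, and glue the two ends at $t\equiv 0$ by an equivariant symplectomorphism. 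Since the resulting $\Psi\colon M\to\R/10\Z$ is onto, a loop $\gamma$ covering the circle once gives $\int_\gamma\iota_{\xi_M}\omega=\int_\gamma\Psi^*dt=10\neq 0$, so $\iota_{\xi_M}\omega$ is not exact and the action is non-Hamiltonian.

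The main obstacle, I expect, is the geometric construction of the reduced K3 family itself: producing a K3 surface carrying five disjoint $(-2)$-spheres together with a path of tamed (and, on $(-1,1)$, K\"ahler) symplectic forms whose classes realize the required line $[\omega_{t_0}]+(t-t_0)e$, whose restrictions to the five spheres have areas shrinking to zero precisely at the critical levels so the spheres can be contracted, and whose two endpoints agree up to the symplectomorphism needed to close the circle. Establishing that this family exists, and that the reconstructed object is a genuine smooth closed symplectic six-manifold rather than merely an orbifold (the $\Z_2$ isotropy must be resolved upstairs even though it survives in the reduced spaces), is where the real work lies; the identification of the $t\in(-1,1)$ reduced spaces as tame K3 surfaces and the verification of the fixed-point weights are then comparatively routine consequences of the construction.
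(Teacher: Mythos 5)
Your outline correctly reproduces the \emph{architecture} of the construction: reduced-space families over intervals of the circle, the local wall-crossing picture at a fixed point with weights $\{2,-1,-1\}$ (a $(-2)$-sphere collapsing to an isolated $\Z_2$ point), the Duistermaat--Heckman bookkeeping (the jump of $\tfrac52$ in the leading coefficient across each critical level, $\tfrac12$ per fixed point), and the final gluing over $\R/10\Z$ with a period argument for non-Hamiltonicity (the paper instead observes that no fixed point has all weights of one sign, so there is no extremum; both work). But the proposal defers, rather than solves, the problems that constitute the theorem's actual content -- as you acknowledge in your last paragraph -- so there is a genuine gap, in fact three.

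First, you never produce the reduced family itself: a generalized K3 surface with exactly five $\Z_2$ points carrying a K\"ahler orbifold form $\zeta$ and a real symplectic orbifold form $\nu$ whose classes realize the required affine line with the right intersection numbers and integrality. The paper obtains this from Kobayashi--Todorov's surjectivity of the polarized period map for generalized K3 surfaces (Theorem~\ref{KT}, Corollary~\ref{corKT}) together with an explicit construction in the K3 lattice (Lemma~\ref{roots}), packaged as Proposition~\ref{propKT}; ``minimal coupling'' can only be run after this input exists. Second, smoothness of the total space over $(1,9)$: the circle bundle over an orbifold reduced space is a priori only an orbifold, and you flag but do not resolve this. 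The paper's mechanism is a parity trick: it builds a holomorphic $\C^\times$ bundle $\cL$ over the singular surface whose Euler class pairs \emph{oddly} with each exceptional divisor, and Lemma~\ref{l39} shows exactly then $\cL$ is a manifold -- the $\Z_2$ isotropy of the base is absorbed into the fiber. Third, ``insert the standard $\C^3$ local models near the ten fixed points'' and ``glue the two ends by an equivariant symplectomorphism'' are not constructions. Creating the fixed points requires modifying the locally free manifold near the critical levels while controlling the reduced spaces on both sides; the paper does this with the symplectic cutting/blow-up machinery of [TWa] applied to two tamed complex structures $\breve J_\pm$ built for this purpose (Section~\ref{lfree}, Proposition~\ref{p62}). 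And the existence of the gluing symplectomorphism is not a consequence of matching DH functions alone: it is the uniqueness statement Proposition~\ref{freeunique2}, whose hypotheses -- reduced classes of the form $\kappa - t\eta$ with $\kappa,\eta$ inducing a primitive embedding $\Z^2 \hookrightarrow H^2(X;\Z)$ -- are precisely the technical conditions the paper must track through every stage of the construction. Without these three ingredients your proposal establishes the numerology of Theorem~\ref{theorem10}, but not the existence claim.
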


We now briefly outline the construction of the symplectic manifold 
with $10$ fixed points
delineated in Theorem~\ref{theorem10}.
The ideas behind the  construction of the more general case described in Theorem~\ref{maintheorem}  are identical,
although the details are slightly more complicated.


First, in Section~\ref{free}, we review the classification of free circle actions
on symplectic manifolds\footnote{In this paper, we only consider manifolds {\em without} boundary.}
 with proper moment maps to an open interval $(a,b) \subset \R$,
under the assumption that the reduced space is symplectomorphic to a tame K3 surface $(X',I'_t,\sigma'_t)$ for all $t \in (a,b)$; we also need to insist on the technical assumption that $[\sigma'_t] = \kappa' - t \eta'$ for all $t \in (a,b)$, 
where $\kappa',\eta'$ induce
a primitive embedding\footnote{An embedding $(\ell_1,\ell_2) \mapsto \ell_1 \kappa' + \ell_2 \eta'$ is {\bf primitive} if every lattice element that is a {\em real} linear combination of $\kappa'$ and $\eta'$ is also
an {\em integral} linear combination.}  $\Z^2 \hookrightarrow H^2(X';\Z)$.
By \cite{T}, such manifolds are essentially classified by their Duistermaat-Heckman functions, which can be any positive polynomial of degree at most two with even coefficients.  
In particular, there is a free circle action on a symplectic manifold $(M',\omega')$ with a proper moment
map $\Psi' \colon M' \to (-1,1)$  satisfying the criteria above
with Duistermaat-Heckman function $12 - 2t^2$.

Next, in Section~\ref{lfree}, we construct a locally free
circle action on a symplectic manifold $(\breve M, \breve \omega)$ with a proper moment map
$\breve \Psi \colon \breve M \to \R$ that satisfies the requirements of Theorem~\ref{theorem10} if we restrict
consideration to $(1,9)$. In particular, the Duistermaat-Heckman function of $\breve M$ is
$2 + \frac{1}{2}(t-5)^2$, and each reduced space is diffeomorphic to a generalized K3 surface with $5$ isolated
$\Z_2$ singularities.
Additionally, we endow $\breve M$ with two congenial complex structures.

The results in Section~\ref{lfree} rely heavily on our construction of 
symplectic orbifold forms on
generalized K3 surfaces with isolated $\Z_2$ singularities, which can be found in Section~\ref{K3}.
This in turn relies on Kobayashi and Todorov's classification of K\"ahler generalized K3 surfaces, and on the relationship between a $2$-dimensional complex orbifold with isolated $\Z_2$ singularities and its blow up, which is described in Section~\ref{orbifold}.

In Section~\ref{fixed}, we construct a  circle action on a symplectic manifold $(\tM,\tomega)$ with a proper moment
map $\tPsi \colon \tM \to (1-\tepsilon, 9 + \tepsilon)$, where $\tepsilon > 0$,
which satisfies the requirements of Theorem~\ref{theorem10} if we restrict consideration to $(1 - \tepsilon, 9 + \tepsilon)$;
in particular, the reduced space $\tM \modt$ is symplectomorphic to a tame K3 surface
$(\Hat X, \Hat I, (\Hat \sigma_\pm)_t)$ for all $t \in 5 \pm (4,4+ \tepsilon)$.
Additionally, $[(\Hat \sigma_\pm)_t] = \Hat \kappa_\pm - t \Hat \eta_\pm$,
where $\Hat \kappa_\pm, \Hat \eta_\pm$ induce a primitive embeddding $\Z^2 \hookrightarrow H^2(\Hat X;\Z)$.
This construction relies on techniques from \cite{TWa}: We use symplectic cutting twice on $\breve M$ to
construct a closed symplectic orbifold $M_\cut$ with 10 isolated $\mathbb{Z}_2$ singularities.
The manifold $\tM$ is an open subset of the blow up of $M_\cut$ at these $10$ points, with
a slightly perturbed symplectic form.  


The moment preimages $(\Psi')^{-1}(1 - \tepsilon, 1) \subset M'$ and $\tPsi^{-1}(1 - \tepsilon,1) \subset \tM$
are symplectic manifolds with free circle actions and proper moment maps to $(1  - \tepsilon,1)$; moreover, their reduced spaces are symplectomorphic to tame K3 surfaces, and both satisfy the technical assumption. 
Since their Duistermaat-Heckman functions agree, this implies that they are equivariantly symplectomorphic -- as long as we restrict
to the moment preimages of a small open set.  A nearly identical argument applies to the moment preimages
 $(\Psi')^{-1}(-1, -1+\tepsilon) \subset M'$ and $\tPsi^{-1}(9, 9 +   \tepsilon) \subset \tM$.
Therefore, as we show in Section~\ref{proof}, we can construct $M$ by gluing together $M'$ and $\tM$.
Finally, $M$ cannot be Hamiltonian, because every fixed point has both positive and negative weights.

\section{Free circle actions}
\labell{free}

In this section, we review the classification of  free Hamiltonian
circle actions on symplectic manifolds with reduced spaces symplectomorphic
to tame K3 surfaces (that also satisfy a technical condition) from \cite{T}.
Such manifolds are essentially classified by their Duistermaat-Heckman function, which can be any positive polynomial of degree at most two with even coefficients. 
The more precise statements below correspond to
\cite[Proposition 3.1]{T} and
\cite[Proposition 3.3]{T}, respectively.
We use the first proposition to construct the free portions of the manifolds described in our main theorems,
e.g., the part of the manifold constructed in Theorem 2 that lies over $(-1,1) \subset \R/10\Z$.
We use the second  to attach this portion to the rest of the manifold.

\begin{Proposition}\labell{freeunique2} 
Let the circle act freely on symplectic manifolds
$(M,\omega)$ and $(M',\omega')$ with proper moment maps
$\Psi \colon M \to (a,b)$ and $\Psi' \colon M' \to (a,b).$
Assume that, for all $t \in (a,b),$  the reduced spaces 
$M \modt  $ and 
$M' \modt $ are  
symplectomorphic 
to tame K3 surfaces $(X,I_t,\sigma_t)$
and $(X',I_t',\sigma_t')$, respectively; moreover,
\begin{itemize}
\item $\int_X \sigma_t^2 = \int_{X'}(\sigma'_t)^2$; and
\item $[\sigma_t] = \kappa - t \eta$ and $[\sigma'_t] = \kappa' - t \eta'$, where 
$\kappa, \eta $ and $\kappa',\eta'$ 
induce primitive embeddings
$\Z^2 \hookrightarrow H^2(X;\Z)$ and $\Z^2 \hookrightarrow H^2(X';\Z)$, respectively.
\end{itemize}
Then every $t \in (a,b)$ has a open neighborhood $U$ so that
$\Psi\inv(U)$ and $(\Psi')\inv(U)$ are equivariantly symplectomorphic.
\end{Proposition}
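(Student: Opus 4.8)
The plan is to fix a point $t_0 \in (a,b)$ and reduce the statement to a matching problem on the single reduced space over $t_0$, using the Duistermaat--Heckman normal form for free Hamiltonian circle actions. Because the action is free and $\Psi$ is proper with values in an interval, a neighborhood $\Psi^{-1}(U)$ of the level $P := \Psi^{-1}(t_0)$ is equivariantly symplectomorphic to a standard \emph{minimal coupling} model built from two pieces of data: the reduced symplectic manifold $(Z,\omega_{t_0})$, where $Z := \Psi^{-1}(t_0)/S^1$, and the principal $S^1$-bundle $P \to Z$, encoded by its Euler class $e \in H^2(Z;\Z)$. Concretely, choosing a connection $\theta$ on $P$ with curvature representing $e$, the model symplectic form on $P \times U$ is $\pi^*\omega_{t_0} + d\big((t-t_0)\,\theta\big)$, the action is the principal one, the moment map is $t$, and the Duistermaat--Heckman theorem gives the cohomological evolution $[\omega_t] = [\omega_{t_0}] - (t-t_0)\,e$ in $H^2(Z;\Z)$. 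The same holds for $M'$, with reduced space $Z' := (\Psi')^{-1}(t_0)/S^1$ and Euler class $e'$. Moreover this model depends on $\theta$ only up to equivariant symplectomorphism, since two connections on isomorphic bundles differ by a closed pullback and an interpolation/Moser argument absorbs the difference. Thus, after possibly shrinking $U$, it suffices to produce a symplectomorphism $f \colon (Z,\omega_{t_0}) \to (Z',\omega'_{t_0})$ with $f^*e' = e$.

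Next I would transport this data to the model tame K3 surfaces. Under the given symplectomorphisms $Z \cong (X,I_{t_0},\sigma_{t_0})$ and $Z' \cong (X',I'_{t_0},\sigma'_{t_0})$, comparing the evolution $[\omega_t] = [\omega_{t_0}] - (t-t_0)e$ with the hypothesis $[\sigma_t] = \kappa - t\eta$ identifies $e$ with $\eta$, and similarly $e'$ with $\eta'$. So the matching problem becomes: find a symplectomorphism $f \colon (X,\sigma_{t_0}) \to (X',\sigma'_{t_0})$ with $f^*\eta' = \eta$ (and hence, automatically, $f^*[\sigma'_{t_0}] = [\sigma_{t_0}]$).

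The heart of the argument --- and the step I expect to be the main obstacle --- is constructing this symplectomorphism of reduced K3 surfaces, which I would obtain by combining the global Torelli theorem with Moser's theorem. Since the Duistermaat--Heckman functions agree, the polynomials $\int_X \sigma_t^2 = (\kappa - t\eta)^2$ and $\int_{X'}(\sigma'_t)^2 = (\kappa' - t\eta')^2$ coincide for all $t$, forcing $\kappa^2 = (\kappa')^2$, $\kappa \cdot \eta = \kappa' \cdot \eta'$, and $\eta^2 = (\eta')^2$. Hence the primitive rank-two sublattices $\langle \kappa, \eta\rangle \subset H^2(X;\Z)$ and $\langle \kappa', \eta'\rangle \subset H^2(X';\Z)$ are abstractly isometric. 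By Nikulin's theory of primitive embeddings into the even unimodular K3 lattice (here rank $2$ inside signature $(3,19)$, where uniqueness applies), this extends to an isometry $\phi \colon H^2(X';\Z) \to H^2(X;\Z)$ with $\phi(\kappa') = \kappa$ and $\phi(\eta') = \eta$; arranging $\phi$ to respect the Hodge decomposition and the positive cone determined by the tamed structures at $t_0$, the surjectivity of the period map and the global Torelli theorem realize $\phi$ as $g^*$ for a diffeomorphism $g \colon X \to X'$. Then $g^*\sigma'_{t_0}$ is cohomologous to $\sigma_{t_0}$ and tames a complex structure, so Moser's theorem replaces $g$ by a symplectomorphism $f$ isotopic to it, with $f^*\eta' = \eta$ preserved.

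Finally I would assemble the pieces. The symplectomorphism $f$ identifies $(Z,\omega_{t_0})$ with $(Z',\omega'_{t_0})$ and carries $e'$ to $e$, so the principal $S^1$-bundles are isomorphic; $f$ therefore lifts to an $S^1$-equivariant diffeomorphism of the levels $P \to P'$ through which $\theta'$ pulls back to a connection representing $e$. Plugging into the minimal coupling models of the first step, using that the connection is immaterial up to equivariant symplectomorphism, and shrinking $U$ so that every interpolated reduced form $[\omega_{t_0}] - (t-t_0)e$ remains in the positive cone (hence symplectic), one obtains the desired equivariant symplectomorphism $\Psi^{-1}(U) \cong (\Psi')^{-1}(U)$. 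The only genuinely hard input is the K3-specific matching in the third paragraph; the Duistermaat--Heckman normal form and the final Moser/gluing step are standard once the lattice isometry and its geometric realization are in hand.
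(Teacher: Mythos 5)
The paper itself does not prove Proposition~\ref{freeunique2}: it is quoted from \cite[Proposition 3.1]{T}, so your proposal has to be measured against the proof given there. Your overall architecture does match that proof --- an equivariant normal form near a level set reduces the problem to producing a symplectomorphism between the reduced spaces at $t_0$ that carries the Euler class of one level to that of the other; lattice theory supplies an isometry matching $(\kappa',\eta')$ with $(\kappa,\eta)$; one then realizes this matching geometrically and glues. One smaller issue first: since the symplectomorphisms $M \modt \to (X,\sigma_t)$ are \emph{not} assumed to depend continuously on $t$, ``comparing the evolutions'' does not by itself identify $e$ with $\eta$. This is fixable: the pullback maps $H^2(X;\Z) \to H^2(\Psi\inv(t_0)/S^1;\Z)$ induced by the various identifications form a countable set of lattice isometries, so a single isometry $A$ occurs for two distinct values of $t$, which forces $A(\eta) = e$ and $A(\kappa) = [\omega_{t_0}] + t_0 e$.

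The genuine gap is in your third paragraph, at exactly the step you flag as the heart of the argument. From a diffeomorphism $g \colon X \to X'$ with $g^*\kappa' = \kappa$ and $g^*\eta' = \eta$ you conclude: ``$g^*\sigma'_{t_0}$ is cohomologous to $\sigma_{t_0}$ and tames a complex structure, so Moser's theorem replaces $g$ by a symplectomorphism.'' Moser's theorem does not say that two cohomologous symplectic forms are symplectomorphic; it requires a \emph{path} of symplectic forms with constant cohomology class joining them, and no such path is exhibited. Convexity of the taming condition would produce such a path if both forms tamed the \emph{same} complex structure, but $g^*\sigma'_{t_0}$ tames $g^*I'_{t_0}$ while $\sigma_{t_0}$ tames $I_{t_0}$, and these are unrelated; indeed, whether cohomologous symplectic forms on a K3 manifold are isotopic is open in general. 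Bridging precisely this gap is the hard content of Section 2 of \cite{T}: there one uses Yau's theorem to produce hyperk\"ahler metrics, twistor rotations to show that a tamed form is isotopic, through cohomologous tamed forms, to one compatible with a hyperk\"ahler structure, and only then the global Torelli theorem and surjectivity of the period map to match the two hyperk\"ahler structures by a map carrying $\eta'$ to $\eta$. Relatedly, your parenthetical ``arranging $\phi$ to respect the Hodge decomposition'' is not something you are free to do: the Hodge structures at $t_0$ are given, and an isometry sending $(\kappa',\eta')$ to $(\kappa,\eta)$ will generally not send one period point to the other; realizing an arbitrary lattice isometry by a diffeomorphism is possible (by the Donaldson--Borcea--Matumoto description of the image of the diffeomorphism group in the isometry group, not by Torelli), but that yields only a diffeomorphism and returns you to the unjustified Moser step. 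So the proposal correctly locates the difficulty but does not surmount it.
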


\begin{Proposition}\labell{existsfree2} 
Fix  a polynomial $P$ of degree at most two with even coefficients  that
is positive on $[a,b] \subset \R$.
Then there exists a free circle action on a symplectic
manifold $(M,\omega)$ with a proper moment map $\Psi \colon M \to (a , b)$
so that, for all $t \in (a, b)$, 
the reduced space
$M \modt $ is symplectomorphic to a tame
K3 surface $(X,I_t,\sigma_t)$; moreover,
\begin{itemize}
\item $\int_X \sigma_t^2 = P(t);$ and
\item $[\sigma_t] = \kappa - t \eta$, where $\kappa, \eta$  induce a primitive embedding $\Z^2 \hookrightarrow H^2(X;\Z)$.
\end{itemize}

\end{Proposition}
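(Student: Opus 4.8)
The plan is to split the statement into a purely cohomological problem on a K3 surface and a standard minimal-coupling construction of the total space. Writing $P(t) = p_2 t^2 + p_1 t + p_0$, the even-coefficient hypothesis is precisely what I need in order to realize the data inside the (even) K3 lattice. So first I would look for a complex structure $I$ on a K3 surface $X$ and primitive integral classes $\kappa,\eta \in H^2(X;\Z)$ with
\[ \kappa^2 = p_0, \qquad \kappa\cdot\eta = -\tfrac12 p_1, \qquad \eta^2 = p_2, \]
together with a smooth family of Kähler forms $\sigma_t$ for $I$ with $[\sigma_t] = \kappa - t\eta$ for all $t \in [a,b]$. Granting this, $(X,I,\sigma_t)$ is automatically a tame K3 surface, $\int_X \sigma_t^2 = (\kappa - t\eta)^2 = P(t)$, and $(\kappa,\eta)$ gives a primitive embedding $\Z^2 \hookrightarrow H^2(X;\Z)$ by construction; only the assembly of $M$ then remains.

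To realize the classes I would use lattice theory together with the global Torelli theorem. The intersection form on $H^2$ of a K3 surface is the even unimodular lattice of signature $(3,19)$, and evenness of the $p_i$ makes $\begin{pmatrix} p_0 & -p_1/2 \\ -p_1/2 & p_2 \end{pmatrix}$ the Gram matrix of an even rank-two lattice $L$. By Nikulin's theory of primitive embeddings of even lattices into even unimodular lattices—there is ample room in rank $22$—I can embed $L$ primitively into $H^2(X;\Z)$. Since $P > 0$ on $[a,b]$, every class $\kappa - t\eta$ has positive square, and these all lie in a single component of the positive cone. In the case relevant here $L$ has signature $(1,1)$, and by the surjectivity of the period map I can choose the period so that $\kappa$ and $\eta$ both become of type $(1,1)$, i.e. $L$ sits inside the Néron--Severi lattice. (When the prescribed form is instead positive definite, one lets the complex structure vary with $t$, realizing the positive plane by a hyperkähler rotation; the rest of the argument is unchanged.)

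Next I would make the endpoints Kähler and invoke convexity. Using the description of the Kähler cone as the component of the positive cone on which $\delta \cdot x > 0$ for every $(-2)$-curve $\delta$, and composing the marking with reflections in $(-2)$-classes if necessary, I can place both $\kappa - a\eta$ and $\kappa - b\eta$ in the Kähler cone. Because the Kähler cone is convex and $\kappa - t\eta = \frac{b-t}{b-a}(\kappa - a\eta) + \frac{t-a}{b-a}(\kappa - b\eta)$ is a convex combination of the endpoints, the entire segment is Kähler. Fixing Kähler forms $\sigma_a,\sigma_b$ representing the two endpoint classes, the affine family $\sigma_t := \frac{b-t}{b-a}\sigma_a + \frac{t-a}{b-a}\sigma_b$ consists of Kähler forms and represents $\kappa - t\eta$ for every $t$, as required.

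Finally I would build $M$ by minimal coupling. Let $\pi\colon Q \to X$ be the principal $S^1$-bundle with $c_1(Q) = \eta$, let $\theta$ be a connection whose curvature form $\beta := (\sigma_b - \sigma_a)/(b-a)$ represents the Duistermaat--Heckman class, so $[\beta] = -\eta$, and set $\alpha := \sigma_a - a\beta$, so that $\sigma_t = \alpha + t\beta$ and $[\alpha] = \kappa$. On $M := Q \times (a,b)$, with $t$ the coordinate on the interval, put $\Psi = t$ and
\[ \omega \;=\; \pi^*\alpha + d(t\,\theta) \;=\; \pi^*(\alpha + t\beta) + dt \wedge \theta. \]
This form is closed and $S^1$-invariant, satisfies $\iota_{\xi_M}\omega = -d\Psi$, and is nondegenerate because each $\sigma_t = \alpha + t\beta$ is symplectic; the action is free since $Q$ is a principal bundle, $\Psi$ is proper onto $(a,b)$, and the reduced space $M\modt$ is $(X,I,\sigma_t)$, with $\int_X \sigma_t^2 = P(t)$. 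The minimal-coupling step is routine (this is the Duistermaat--Heckman picture \cite{DH}); the real work, and the step I expect to be the main obstacle, is the K3 realization of the second and third paragraphs—simultaneously placing the two prescribed integral endpoint classes in a single Kähler chamber (the $(-2)$-curve bookkeeping) and, when the prescribed quadratic form is not of signature $(1,1)$, replacing the fixed complex structure by a hyperkähler-rotated family $I_t$ so that each $\kappa - t\eta$ is tamed.
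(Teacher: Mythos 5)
The paper does not actually prove this proposition; it is imported verbatim from \cite[Proposition 3.3]{T}, so the comparison target is Tolman's proof there, whose very formulation (the complex structure is written $I_t$, and only \emph{taming} is required) already signals that the complex structure must vary with $t$. Your minimal-coupling step and the lattice realization of the Gram matrix are fine in outline, but your central case --- signature $(1,1)$, a \emph{fixed} complex structure $I$ with $\kappa,\eta \in H^{1,1}$, and convexity of the K\"ahler cone --- has a genuine gap: the K\"ahler cone is a single chamber of the positive cone cut out by the walls $\delta^\perp$ of $(-2)$-classes, so if some wall crosses the segment $\{\kappa - t\eta : t \in [a,b]\}$, no composition of reflections can place both endpoints (let alone the whole segment) in the K\"ahler cone. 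Tameness buys nothing here: if $\sigma$ tames $I$ and $\delta$ is an integral $(1,1)$-class with $\delta^2 = -2$, then $\pm\delta$ is effective by Riemann--Roch, so $[\sigma]\cdot\delta \neq 0$. This failure is not exotic; it occurs in exactly the cases this paper needs. For $P(t) = 12 - 2t^2$ on $[-1,1]$ (the free piece of Theorem \ref{theorem10}) one is forced to take $\kappa^2 = 12$, $\eta^2 = -2$, $\kappa\cdot\eta = 0$; then $\eta$ itself is a root, and at $t = 0$ the class $\kappa$ is orthogonal to it, so for \emph{any} complex structure making both $\kappa$ and $\eta$ of type $(1,1)$ there is no K\"ahler, or even tamed, form in the class $\kappa$. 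Whenever $p_2 = -2$, the wall of the root $\eta$ sits at the vertex of the parabola $P$, which lies inside the interval in all of the paper's applications; so the period must tilt near the wall, making $\eta$ non-$(1,1)$ there.

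A second gap: your two branches together still miss the degenerate case $p_1^2 = 4p_0p_2$ (e.g.\ constant $P$). Then $\mathrm{span}(\kappa,\eta)$ contains a nonzero isotropic vector orthogonal to a vector of positive square, and such a plane embeds neither in $H^{1,1}$ (signature $(1,19)$) nor in a positive-definite $3$-plane, so neither the fixed-$I$ argument nor rotation within a fixed hyperk\"ahler metric can apply (and Nikulin's embedding theory, which you invoke, concerns nondegenerate lattices). The repair is to run your hyperk\"ahler idea with a $t$-\emph{dependent} metric in all cases: choose a smooth family of generic positive-definite $3$-planes $H_t \subset L_\R$ with $\kappa - t\eta \in H_t$ and no $(-2)$-class of $L$ orthogonal to $H_t$; surjectivity of the period map for hyperk\"ahler (Einstein) metrics on K3 then produces metrics $g_t$, and the $g_t$-harmonic representative of $\kappa - t\eta$ is parallel and self-dual, hence a K\"ahler form for a compatible rotated complex structure $I_t$. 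Note that this family of forms is no longer affine in $t$, so your coupling form must be built with a $t$-dependent connection $\theta_t$ whose curvature represents $-\eta$ and equals $\dot\sigma_t$; this is a routine but necessary modification of your last step.
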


\section{Complex orbifolds with isolated $\Z_2$ singularities}
\label{orbifold}

In this section, we analyze the relationship between
a $2$-dimensional complex orbifold with isolated $\Z_2$ singularities and its
blow up, focusing on cohomology classes, holomorphic forms, and holomorphic  bundles.
Most of these facts are well known, and are true in a (much) more general context.
However, for completeness, we include here  elementary proofs for this simple case.

Let $X$ be a complex orbifold.
By definition, there exists an open cover $\{V_\alpha\}$ of $X$
and an {\bf orbifold chart}
$\{ \Tilde V_\alpha, \Gamma_\alpha, \phi_\alpha\}$ for each $\alpha$,
 where  $\Tilde V_\alpha$ is an
open subset of $\C^n$ that is invariant under the linear action of
a finite group $\Gamma_\alpha$, and $\phi_\alpha$ is a homeomorphism
from $\Tilde V_\alpha/\Gamma_\alpha$ to $V_\alpha$.
An \textbf{orbifold form} on $X$ 
is a  $\Gamma_\alpha$ invariant  form  on each orbifold chart $\Tilde V_\alpha$
so that
any two agree on overlaps.
We extend the notion of closed form, K\"ahler form, holomorphic form, etc.
to orbifolds analogously.

Let $\C^2/\Z_2$ be the quotient of $\C^2$ by the diagonal $\Z_2$ action.
Define an algebraic variety $Z := \{x \in \C^3 \mid x_1 x_3 = x_2^2\}$.
The  map from $\C^2/\Z_2$ to $Z$
that sends $[z_1,z_2]$ to $(z_1^2, z_1 z_2, z_2^2)$
is a continuous proper bijection to a locally compact Hausdorff space;
hence, it's a homeomorphism.
Moreover, by Luna \cite{L}, this homeomorphism  induces an isomorphism
between the sheaf of orbifold holomorphic functions on $\C^2/\Z_2$, and
the sheaf of holomorphic functions on $Z \subset \C^3$.
(In contrast, it does not induce an isomorphism between the sheaves
of smooth functions; the inverse homeomorphism from $Z$ to $\C/\Z_2$ is not smooth.)
Therefore, a $2$-dimensional complex orbifold
with isolated $\Z_2$ singularities is naturally
a complex analytic space, and maps between such orbifold are
maps of complex analytic spaces.

Conversely, any complex analytic space that is locally modelled
on  the variety $Z$ (or $\C^2$)  is naturally  a $2$-dimensional complex orbifold
with isolated $\Z_2$ singularities, and maps between such complex analytic
spaces are  maps of orbifolds.
This is an immediate consequence on the lemma below, adapted from \cite{KLM}.

\begin{Lemma}\label{l:lift}
Let $\Z_2$ act diagonally on $\C^2$ and let $\pi \colon \C^2 \to \C^2/\Z_2$
be the quotient map.
Let $U$ and $U'$ be  simply connected $\Z_2$ equivariant  subsets of $\C^2$,
and let $f \colon \pi(U) \to \pi(U')$ be a homeomorphism that induces an
isomorphism between the sheaves of holomorphic functions.
Then there  exists 
an equivariant  biholomorphic map
$\tilde f \colon U \to U'$ such that  $\pi \circ \tilde f = f \circ \pi$.
\end{Lemma}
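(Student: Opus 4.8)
The plan is to lift $f$ by covering space theory on the smooth locus, enforce equivariance there, and then fill in the singular point. Write $\sigma$ for the generator of the $\Z_2$-action, $\sigma(z) = -z$, so that $\pi$ is a branched double cover, branched only over the origin, and the action is free on $\C^2 \setminus \{0\}$. Since $f$ is an isomorphism of complex analytic spaces, it preserves singular loci; as the singular locus of $\pi(U)$ is $\{\pi(0)\}$ when $0 \in U$ and empty otherwise, this forces $0 \in U$ if and only if $0 \in U'$, and in that case $f(\pi(0)) = \pi(0)$. On the smooth loci $\pi(U) \setminus \{\pi(0)\}$ and $\pi(U') \setminus \{\pi(0)\}$ the orbifold holomorphic functions are the ordinary holomorphic functions, so $f$ restricts to a biholomorphism of complex manifolds there, and $\pi$ restricts to honest connected $\Z_2$-coverings $U^* := U \setminus \{0\} \to \pi(U) \setminus \{\pi(0)\}$ and $U'^* := U' \setminus \{0\} \to \pi(U') \setminus \{\pi(0)\}$.

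First I would produce the lift. Removing the point $0$ from the open simply connected set $U \subseteq \C^2$ changes neither connectedness nor the fundamental group, since $\{0\}$ has real codimension four; thus $U^*$ is still simply connected. The lifting criterion for covering spaces then produces a continuous map $\tilde f \colon U^* \to U'^*$ with $\pi \circ \tilde f = f \circ \pi$, and this $\tilde f$ is automatically holomorphic because $\pi$ is a local biholomorphism off the origin while $f$ is holomorphic there. (When $0 \notin U$ the same step applies verbatim with $U^* = U$.)

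Next I would force equivariance. The map $z \mapsto \sigma(\tilde f(\sigma z)) = -\tilde f(-z)$ is again a lift of $f \circ \pi$, so by uniqueness of lifts on the connected set $U^*$ it equals either $\tilde f$ or $\sigma \circ \tilde f$. In the first case $\tilde f \circ \sigma = \sigma \circ \tilde f$, which is the desired equivariance. The second case would make $\tilde f$ invariant, hence of the form $g \circ \pi$ for a continuous $g \colon \pi(U) \setminus \{\pi(0)\} \to U'^*$; then $\pi \circ g = f$, so $s := g \circ f^{-1}$ would be a continuous section of the \emph{connected} double cover $\pi \colon U'^* \to \pi(U') \setminus \{\pi(0)\}$, which is impossible. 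Hence $\tilde f$ is equivariant on $U^*$.

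It remains to extend across the origin and verify bijectivity. As $z \to 0$ we have $\pi(\tilde f(z)) = f(\pi(z)) \to f(\pi(0)) = \pi(0)$, so $\tilde f(z) \to 0$; since $\{0\}$ has codimension two in $\C^2$, the Hartogs removable-singularity theorem extends $\tilde f$ to a holomorphic map $U \to U'$ with $\tilde f(0) = 0$, and both $\pi \circ \tilde f = f \circ \pi$ and the equivariance $\tilde f \circ \sigma = \sigma \circ \tilde f$ persist by continuity. Finally, running the whole construction on $f^{-1}$ yields an equivariant holomorphic lift $\widetilde{f^{-1}} \colon U' \to U$; the composite $\widetilde{f^{-1}} \circ \tilde f$ is a holomorphic lift of the identity, hence one of the deck transformations $\pm \id_U$, and in particular invertible, so $\tilde f$ is a biholomorphism. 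The one genuinely delicate point is ruling out the invariant lift in the equivariance step; this is precisely where connectedness of the double cover, equivalently the nonexistence of a continuous section, is essential.
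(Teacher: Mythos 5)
Your proof is correct and follows essentially the same route as the paper's: lift $f$ over the punctured sets via covering space theory, use uniqueness of lifts (deck transformations) to get equivariance and biholomorphy, and extend across the origin by a removable-singularity theorem. The only differences are cosmetic — the paper pins down $f([0])=[0]$ by local homology rather than by the singular locus of the analytic structure, and it gets boundedness near $0$ from properness of $\pi$ rather than from continuity of $f$; your explicit ruling-out of the invariant lift fills in a step the paper dismisses with ``similarly.''
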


\begin{proof}

If $0 \in U$ then
$f([0])= [0]$,
because the local homology group
$H_*(\C^2/\Z_2, \C^2/\Z_2  \smallsetminus \{[0]\};\Z)$ at $[0]$
is different from that of any other point.

The quotient map $\pi$ restricts to
a covering map from  $\C^2 \smallsetminus \{0\}$  onto $\C^2/\Z_2 \smallsetminus \{[0]\}$.
Hence, since
$U \smallsetminus \{0\}$ is simply connected,
there's a  lift of the restriction of $f$ to $\pi(U) \smallsetminus \{[0]\}$,
that is, a  holomorphic map $\tilde f_0 \colon U \smallsetminus \{0\}
\to U' \smallsetminus \{0\}$ such that
$\pi \circ \tilde f_0 = f \circ \pi$ on $U \smallsetminus \{0\}$.
Moreover, the composition of $\tilde f_0$ with the lift of the
restriction of $f^{-1}$ to $\pi(U') \smallsetminus \{[0]\}$
is either the identity or multiplication by $-1$.
Hence, $\tilde f_0$ is biholomorphic.
Similarly, $\tilde f_0$ is $\Z_2$ equivariant.

Next, fix a  compact neighborhood $B \subset U$ of $0$.
Since $\pi$ is proper, $\pi^{-1}(f(\pi(B)))$ is compact;
hence, $\tilde f_0(B \smallsetminus \{0\}) \subset \pi^{-1}(f(\pi(B)))$ is bounded.
Therefore, by  the Riemann extension theorem, $\tilde f_0$ has a unique holomorphic
extension $\tilde f \colon U \to U'$, which is the required 
lift of $f$. \end{proof}

Let $X$ be a $2$-dimensional complex orbifold with isolated $\Z_2$ singularities $p_1,\dots,p_k$.
We define the {\bf blow up} $q \colon \Hat{X} \to X$ of $X$ at $p_1,\dots,p_k$
to be its blow up as a complex analytic space. (See, for example, \cite{Fi}.)
Blow ups are  unique up to canonical isomorphism. Moreover, the map  $q$  
restricts to a biholomorphism from $q^{-1}(X \smallsetminus \{p_1,\dots,p_k\})$
to $X \smallsetminus \{p_1,\dots,p_k\}$.
Hence, since blowing up commutes with restriction to open subspaces, the blow up of $X$ is determined by the  lemma below.
In particular, $q$ is proper and the {\bf exceptional divisor} $E_i := q\inv(p_i)$ is biholomorphic to $\CP^{1}$ for all $i$.

\begin{Lemma}
Let $\C^\times$ act on $(\C^2 \ssetminus \{0\}) \times \C$ by
$$\lambda \cdot (z_1,z_2;u) = (\lambda z_1,  \lambda z_2; \lambda^{-2} u).$$
The blow up of $\C^2/\Z_2$ is 
the complex surface
$$\Hat{ \C^2/\Z_2} := (\C^2 \ssetminus \{0\}) \times_{\C^\times} \C,$$
and the map
that
takes $ [z_1,z_2;u] \in \Hat{\C^2/\Z_2}$  to $[ \sqrt{u} z_1,  \sqrt{u} z_2] \in \C^2/\Z_2$.
\end{Lemma}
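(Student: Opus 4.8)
The plan is to pass to the identification $\C^2/\Z_2 \cong Z := \{x_1 x_3 = x_2^2\}$ recorded above and to exhibit $q$ directly as the blow up $\mathrm{Bl}_0 Z$ of $Z$ at the origin. First I would eliminate the square root from the formula for $q$: composing with the homeomorphism $\C^2/\Z_2 \to Z$ that sends $[w_1,w_2]$ to $(w_1^2, w_1 w_2, w_2^2)$, the map becomes
$$ q[z_1,z_2;u] = (u z_1^2,\ u z_1 z_2,\ u z_2^2). $$
This is manifestly well defined and holomorphic -- both the sign ambiguity of $\sqrt u$ and the $\C^\times$-scaling disappear -- and it lands in $Z$ because $(u z_1^2)(u z_2^2) = (u z_1 z_2)^2$. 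A routine check then shows that $q$ restricts to a biholomorphism over $Z \ssetminus \{0\}$ and that $q^{-1}(0)$ is the zero section $\{u = 0\} \cong \CP^1$.

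The heart of the argument is a chartwise comparison with the blow up. I would present $\mathrm{Bl}_0 Z$ as the blow up of the maximal ideal $\mathfrak{m} = (x_1, x_2, x_3)$, namely the closure in $Z \times \CP^2$ of the graph of $x \mapsto [x_1 : x_2 : x_3]$ over $Z \ssetminus \{0\}$; its fibre over the origin is the conic $\{y_1 y_3 = y_2^2\}\subset \CP^2$, a single $\CP^1$. In the affine chart $\{y_1 \neq 0\}$ this blow up is smooth with coordinates $(x_1, t)$ and map $(x_1, t) \mapsto (x_1, t x_1, t^2 x_1)$, and symmetrically in $\{y_3 \neq 0\}$ with coordinates $(x_3, s)$ and map $(x_3, s) \mapsto (s^2 x_3, s x_3, x_3)$. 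On the other side I would cover $\Hat{\C^2/\Z_2}$ by the two $\C^\times$-slices $\{z_1 \neq 0\}$ and $\{z_2 \neq 0\}$, with coordinates $(a, b) := (z_2/z_1,\ u z_1^2)$ and $(a', b') := (z_1/z_2,\ u z_2^2)$. In these coordinates the formula above for $q$ reads $(a,b) \mapsto (b, ab, a^2 b)$ and $(a',b') \mapsto (a'^2 b', a' b', b')$, exactly the two chart expressions of the blow-up map. Since the coordinate changes also match ($a' = 1/a$ and $b' = a^2 b$, i.e. the transition $t \mapsto 1/t$ together with the $\mathcal{O}(-2)$ gluing of the fibres), this produces a biholomorphism $\Hat{\C^2/\Z_2} \to \mathrm{Bl}_0 Z$ commuting with the two maps to $Z$. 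By uniqueness of the blow up up to canonical isomorphism over the base, $(\Hat{\C^2/\Z_2}, q)$ is the blow up of $\C^2/\Z_2$ at its singular point, and properness of $q$ is inherited from that of $\mathrm{Bl}_0 Z \subset Z \times \CP^2$.

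As a conceptual check on the chart computation I would verify invertibility of the pulled-back ideal: $q^{-1}\mathfrak{m} \cdot \mathcal{O} = u\,(z_1^2, z_1 z_2, z_2^2)\,\mathcal{O}$ equals $(b)$ on $\{z_1 \neq 0\}$ and $(b')$ on $\{z_2 \neq 0\}$, with $b/b' = (z_1/z_2)^2$ a unit on the overlap, so this ideal is invertible and $q$ satisfies the universal property that characterizes the blow up. I expect the main difficulty to be bookkeeping rather than conceptual depth. Two points need care: that ``blow up at the point'' is understood as the blow up of the maximal ideal, so that this single blow up already resolves the $A_1$-singularity and yields a smooth surface; and that the two charts genuinely glue to a biholomorphism -- that is, $q$ is an isomorphism of complex spaces onto $\mathrm{Bl}_0 Z$, not merely a continuous bijection -- which is where checking the transition functions and the behaviour along the exceptional $\CP^1$ does the real work.
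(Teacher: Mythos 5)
Your proposal is correct and follows essentially the same route as the paper: both pass to the identification $\C^2/\Z_2 \cong Z = \{x_1x_3 = x_2^2\}$, realize the blow up concretely as $\Hat Z \subset Z \times \CP^2$ cut out by $y_1y_3 = y_2^2$ and $x_iy_j = x_jy_i$, and exhibit an explicit isomorphism $\Hat{\C^2/\Z_2} \to \Hat Z$ commuting with the projections to $Z$. The only difference is presentational: the paper writes one global holomorphic map $[z_1,z_2;u] \mapsto \bigl((uz_1^2, uz_1z_2, uz_2^2), [z_1^2, z_1z_2, z_2^2]\bigr)$ together with an explicit inverse, whereas you verify the same isomorphism chart by chart (with the transition-function and invertible-ideal checks), which amounts to the same computation.
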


Note that this map is not smooth.

\begin{proof}
The  blow up of $Z  := \{x \in \C^3 \mid x_1 x_3 = x_2^2\}$ at $0$ is
the algebraic variety
$$\Hat{Z} := \{ (x, [y]) \in \C^3 \times \CP^2 \mid
y_1 y_3 = y_2^2 \mbox{ and }  x_i y_j = x_j y_i \ \forall \ i,j  \},$$
and  the map  $q \colon \Hat{Z} \to Z$ defined by $q(x,[y]) =x$.
Define  a holomorphic map from  $\Hat{\C^2/\Z_2}$ to $\Hat Z$ by 
$$[z_1,z_2;u] \mapsto ((u z_1^2, u z_1 z_2, u z_2^2), [ z_1^2, z_1 z_2, z_2^2]).$$
An inverse map is given by $((x_1,x_2,x_3),[y_1,y_2,y_3]) \mapsto
(\sqrt{y_1}, \frac{y_2}{\sqrt{y_1}}; \frac{x_1}{y_1}) $ for $y_1 \neq 0$,
and by 
 $((x_1,x_2,x_3),[y_1,y_2,y_3]) \mapsto
(\frac{y_2}{\sqrt{y_3}}, \sqrt{y_3}; \frac{x_3}{y_3}) $ for $y_3 \neq 0$.
Under the resulting identifications of $\Hat{\C^2/\Z_2}$ with
$\Hat{Z}$ and $\C^2/\Z_2$ with $Z$, the  map $q \colon \Hat{Z} \to Z$
takes $ [z_1,z_2;u] \in \Hat{\C^2/\Z_2}$  to $[ \sqrt{u} z_1,  \sqrt{u} z_2] \in \C^2/\Z_2$. \end{proof}

We are now ready to demonstrate some relationships between the cohomologies of $X$ and $\Hat X$.

\begin{Lemma} \label{lcoho} 
Let $q:\Hat X \rightarrow X$ be the blow up  of a $2$-dimensional
complex orbifold
 at isolated $\mathbb{Z}_2$ singularities $p_1,\dots,p_k$.
Let $I(X)$ be  the kernel of the restriction map
$H^2(\Hat X;\Z) \to H^2( \cup_i E_i; \Z)$,
where $E_i := q^{-1}(p_i)$ for each $i$.
\begin{enumerate}
\item The map $q$ induces an isomorphism $q^* \colon H^2(X;\mathbb{Z}) \to I(X)$. 
\item 
The natural inclusion  $\hat \iota \colon \Hat X \ssetminus \cup_i E_i \to \Hat X$ 
induces an injection  $\hat \iota^* \colon I(X) \to H^2(\Hat X \ssetminus \cup_i E_i;\Z)$.
\end{enumerate}
\end{Lemma}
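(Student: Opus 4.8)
The plan is to compute $H^2(X;\Z)$ and the subgroup $I(X)\subset H^2(\Hat X;\Z)$ by Mayer--Vietoris, using compatible open covers on the two spaces, and then to compare the two computations through $q$. Write $U := X \ssetminus \{p_1,\dots,p_k\}$ for the smooth locus and $\Hat U := \Hat X \ssetminus \cup_i E_i$; since $q$ restricts to a biholomorphism $\Hat U \to U$, I identify these and write $j \colon U \to X$ and $\Hat\iota \colon \Hat U \to \Hat X$ for the inclusions. For each $i$ I pick a contractible neighborhood $N_i$ of $p_i$ (a cone on the link $S^3/\Z_2 \cong \R P^3$), put $\Hat N_i := q^{-1}(N_i)$, and set $N = \cup_i N_i$, $\Hat N = \cup_i \Hat N_i$. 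Then $\Hat N_i$ deformation retracts onto $E_i \cong \CP^1$ (its normal bundle is $\cO(-2)$ by the previous lemma, though I will not need this), while $U \cap N$ and $\Hat U \cap \Hat N$ are both identified with the disjoint union $L := \sqcup_i L_i$ of links, each $L_i$ homotopy equivalent to $\R P^3$. The only topological inputs I use are $H^1(L;\Z)=0$, the contractibility of each $N_i$, and the fact that restriction to $\Hat N$ equals the restriction $r \colon H^2(\Hat X) \to H^2(\cup_i E_i)$ to the exceptional divisors, so that $I(X)=\ker r$.

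For part (2), I run Mayer--Vietoris for $\Hat X = \Hat U \cup \Hat N$. Because $H^1(L;\Z)=0$, the combined restriction map $(\Hat\iota^*,\, r) \colon H^2(\Hat X) \to H^2(\Hat U) \oplus H^2(\cup_i E_i)$ is injective. Since $I(X)=\ker r$, this immediately shows that $\Hat\iota^*$ is injective on $I(X)$, which is exactly part (2).

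For part (1), Mayer--Vietoris for $X = U \cup N$, together with $H^2(N)=0$ and $H^1(L)=0$, shows that $j^* \colon H^2(X) \to H^2(U)$ is injective with image $\ker \rho$, where $\rho \colon H^2(U) \to H^2(L)$ is restriction to the links. Next, $q^*\alpha$ restricts to zero on each $E_i$, since $q|_{E_i}$ is the constant map to $p_i$; hence $q^*$ indeed lands in $I(X)$, and since $q$ is the identity on $U = \Hat U$ we have $\Hat\iota^* \circ q^* = j^*$. Now take any $\gamma \in I(X)$. Its restriction to $L$ equals the restriction of $r(\gamma)=0$, so $\rho(\Hat\iota^*\gamma)=0$, i.e. $\Hat\iota^*\gamma \in \ker\rho = \image(j^*)$. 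Choosing $\alpha$ with $j^*\alpha = \Hat\iota^*\gamma$ gives $\Hat\iota^*(q^*\alpha) = j^*\alpha = \Hat\iota^*\gamma$, and the injectivity of $\Hat\iota^*$ on $I(X)$ from part (2) forces $q^*\alpha=\gamma$. Thus $q^*$ is onto $I(X)$; it is injective because $j^* = \Hat\iota^* \circ q^*$ is injective. This proves (1).

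The main obstacle is not any single hard computation but the bookkeeping that makes the two Mayer--Vietoris sequences genuinely comparable: one must verify that restriction-to-links on the $X$-side and on the $\Hat X$-side agree under the identification $U \cong \Hat U$, and that the $\Hat N$-component of the second sequence is literally the map $r$ defining $I(X)$ (this uses the deformation retraction of $\Hat N_i$ onto $E_i$). Both follow from $q$ being a biholomorphism away from the singular points, but they are precisely what legitimizes the diagram chase in part (1). I emphasize that the detailed structure of the restriction $H^2(\CP^1) \to H^2(\R P^3)$ (reduction mod $2$, coming from the self-intersection $-2$) is never needed; only $H^1(L)=0$ and the contractibility of the $N_i$ enter.
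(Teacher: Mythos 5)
Your proof is correct. For part (2) it is essentially the paper's own argument: the paper also applies Mayer--Vietoris to the cover of $\Hat X$ by $\Hat X \ssetminus \cup_i E_i$ and neighborhoods $\Hat U_i$ of the exceptional divisors, using $H^1(\R\P^3;\Z)=0$ exactly as you do. Part (1) is where you genuinely diverge. The paper proves (1) directly, and independently of (2), via relative cohomology: since $q$ restricts to a homeomorphism away from the exceptional set, and the $\Hat U_i$ (resp.\ the $U_i$) deformation retract onto $E_i$ (resp.\ $p_i$), the pullback $q^* \colon H^*(X,\{p_i\}_i;\Z)\to H^*(\Hat X,\cup_i E_i;\Z)$ is an isomorphism; then the long exact sequences of the two pairs, together with $H^1(\cup_i E_i;\Z)=H^1(\{p_i\}_i;\Z)=H^2(\{p_i\}_i;\Z)=0$, identify $H^2(\Hat X,\cup_i E_i;\Z)$ with $I(X)$ and $H^2(X,\{p_i\}_i;\Z)$ with $H^2(X;\Z)$, giving claim (1) at once. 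You instead avoid relative cohomology altogether: a second Mayer--Vietoris, on $X=U\cup N$, identifies $\image(j^*)$ with $\ker\rho$, and you get surjectivity of $q^*$ onto $I(X)$ by a chase that feeds the injectivity from part (2) back into part (1). Both routes are sound. The paper's approach buys logical independence of the two claims and packages the comparison of $X$ and $\Hat X$ into a single excision-type isomorphism of pairs; yours is marginally more elementary (only absolute cohomology of open covers) at the price of the compatibility bookkeeping you flag and of reversing the logical order, making (1) depend on (2) --- which is harmless here, since your proof of (2) nowhere uses (1).
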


\begin{proof}
Let $U_i$ be an open  neighborhood of $p_i$ that lifts to a  disk in
an orbifold chart  for each $i$.
Then  $\{p_i\}$ is a deformation retract of $U_i$, and $E_i$ is
a deformation retract of $\Hat U_i := q^{-1}(U_i) $, for all $i$.
Moreover,  $q$ restricts to a homeomorphism from $\Hat X \smallsetminus \cup_i E_i$
to $X \smallsetminus \{p_i\}_{i}$. Hence,  the pullback map $q^* \colon
H^*(X, \{p_i\}_{i};\Z) \to 
H^*(\Hat X, \cup_i E_i;\Z)$ is an isomorphism.
Since $H^1(\cup_i E_i;\Z) = H^1(\{p_i\}_i;\Z) = H^2( \{p_i\}_i;\Z) = 0$,
the natural maps $H^2(\Hat X , \cup_i E_i;\Z) \to I(X)$ 
and
$H^2(X, \{p_i\}_i;\Z) \to H^2(X;\Z)$
are isomorphisms;
the first claim follows immediately.
Since $H^1( \Hat U_i \ssetminus  E_i;\Z) = H^1( \R\P^{3};\Z) = \{0\},$
the second claim follows from the 
long exact Mayer-Vietoris sequence
for the pair $(\Hat X \smallsetminus  \cup_i E_i,\, \cup_i \Hat U_i)$. 
\end{proof}

Given an $n$-dimensional  manifold  $N$ and a nonnegative integer $m$, let $\Omega^m_c(N)$ be the vector space of
smooth forms of degree $m$ on $N$ with compact support.
An {\bf $\mathbf m$-current} is a continuous (in the sense of distributions)
linear functional  $T \colon \Omega^{n-m}_c(N) \to \R$.
Let $\mathcal C_m(N)$ denote
the vector-space of $m$-currents on $N$.
Define a differential  $\del \colon \mathcal C_m(N) \to \mathcal C_{m+1}(N)$ by
$(\del T)(\psi) := (-1)^{m+1} T(d \psi)$ for all $\psi \in \Omega_c^{n-m-1}(N)$;
clearly, $\del^2 = 0$.

A (smooth) $m$-form $\eta \in \Omega^m(N)$ defines an  $m$-current by $\psi \mapsto \int_N \eta \wedge \psi$ for all $\psi \in \Omega_c^{n-m}(N)$;
we identify this current with $\eta$.
By Stokes' Theorem, this identification intertwines the differentials on $\Omega^*(N)$
 and $\mathcal C_*(N)$.
Moreover, 
every closed current is homologous to a differential form,  and a form
is the boundary of a current exactly if it is the boundary of another  form
 \cite[Theorem 14]{deRham}. 
Therefore, every closed current represents a unique cohomology class.

Now let $q \colon \Hat X \to X$ be the blow up  of a $2$-dimensional
complex orbifold $X$ at isolated $\Z_2$ singularities $p_1,\dots,p_k$.
Let $\hat \iota \colon \Hat X \ssetminus \cup_i E_i \to \Hat X$
be the inclusion map, where $E_i := q^{-1}(p_i)$ for all $i$.
Although $q$ is generally not smooth, the composition $q \circ \hat \iota$
is smooth.  
Hence, given  an orbifold form $\eta \in \Omega^k(X)$,
the pullback $(q \circ \hat \iota)^* \eta$
is a smooth form on $\Hat X \ssetminus \cup_i E_i$.
We say that $\eta$ {\bf defines a current} on $\Hat X$ if  the map
$\Omega_c^{n-k}(\Hat X) \to \mathbb{R}$ given by  $\psi \mapsto \int_{\Hat X \ssetminus \cup_i E_i } \hat \iota^*\psi \wedge (q \circ \hat \iota)^*\eta$ 
is a (well-defined) continuous linear function.
Our next lemma gives the cohomology class of this current when $\eta$ is closed.

\begin{Lemma} \label{current}
Let $q \colon \Hat X \to X$ be the blow up of a
$2$-dimensional complex orbifold  at isolated $\Z_2$ singularities.
Let $I(X)_\R$ be the kernel of the restriction map
$H^2(\Hat X; \R) \to H^2(\cup_i E_i;\R)$, where
$E_1,\dots,E_k$ are the exceptional divisors.
If a closed orbifold form $\eta \in \Omega^2(X)$ defines a closed current on $\Hat X$
in a class $\epsilon \in I(X)_\R$, then $q^*([\eta])=\epsilon$. \end{Lemma}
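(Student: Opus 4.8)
The plan is to compare $\epsilon$ and $q^*[\eta]$ after restricting to the open complement $U := \Hat X \ssetminus \cup_i E_i$, where everything is smooth, and then to transport the resulting equality back to $\Hat X$ using the injectivity of $\hat\iota^*$ on $I(X)_\R$. The structural point that makes this work is that, away from the exceptional divisors, the blow-down $q \circ \hat\iota$ is a genuine smooth map, so on $U$ the current machinery collapses to ordinary de Rham theory and the form $(q\circ\hat\iota)^*\eta$ becomes available for direct comparison.

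First I would identify the restriction of the current defined by $\eta$. Writing $T_\eta$ for this closed current on $\Hat X$, which by hypothesis represents $\epsilon$, I observe that for any test form $\psi$ with compact support inside $U$, the defining integral $\int_U \hat\iota^*\psi \wedge (q\circ\hat\iota)^*\eta$ is precisely the pairing of the smooth form $(q\circ\hat\iota)^*\eta$ against $\psi$. Hence the restriction $T_\eta|_U$ is exactly the smooth closed form $(q\circ\hat\iota)^*\eta$ on $U$.

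Next I would compute the two restricted classes separately. On one side, functoriality of pullback together with the smoothness of $q\circ\hat\iota$ gives $\hat\iota^* q^*[\eta] = (q\circ\hat\iota)^*[\eta] = [(q\circ\hat\iota)^*\eta]$ in $H^2(U;\R)$. On the other side, by de Rham's theorem for currents \cite[Theorem 14]{deRham} the class $\epsilon$ is represented on $\Hat X$ by a smooth form $\beta$ homologous to $T_\eta$, say $T_\eta - \beta = \del S$ for some current $S$. Restricting this identity to $U$ and inserting the previous step yields $(q\circ\hat\iota)^*\eta - \hat\iota^*\beta = \del(S|_U)$ as currents on $U$. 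Since a form bounds a current exactly when it bounds a form, the smooth closed forms $(q\circ\hat\iota)^*\eta$ and $\hat\iota^*\beta$ are cohomologous, so $\hat\iota^*\epsilon = [\hat\iota^*\beta] = [(q\circ\hat\iota)^*\eta]$. Combining the two computations gives $\hat\iota^* q^*[\eta] = \hat\iota^*\epsilon$. Since both $q^*[\eta]$ and $\epsilon$ lie in $I(X)_\R$ (the former by the real-coefficient version of Lemma~\ref{lcoho}(1)), and $\hat\iota^*$ is injective on $I(X)_\R$ by the real version of Lemma~\ref{lcoho}(2)—whose Mayer--Vietoris proof goes through verbatim over $\R$—I conclude $q^*[\eta] = \epsilon$.

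The main obstacle is the middle step: making rigorous that the current-theoretic class $\epsilon$ restricts on $U$ to the honest de Rham class of $(q\circ\hat\iota)^*\eta$. This is exactly the point where the passage to currents earns its keep, since $q$ fails to be smooth across $\cup_i E_i$ and there is no global smooth pullback of $\eta$ to compare with $\beta$; the argument rests essentially on de Rham's theorem for currents and on the compatibility of the current/form identification with restriction to open subsets.
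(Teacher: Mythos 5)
Your proof is correct and takes essentially the same route as the paper's: restrict the current to $\Hat X \ssetminus \cup_i E_i$, where it coincides with the smooth form $(q\circ\hat\iota)^*\eta$, deduce $\hat\iota^*\epsilon = [(q\circ\hat\iota)^*\eta] = \hat\iota^* q^*[\eta]$, and conclude by the injectivity of $\hat\iota^*$ on $I(X)_\R$ from Lemma~\ref{lcoho}(2). Your added detail (invoking de Rham's theorem for currents to justify that restricting the current restricts its class, and noting the real-coefficient version of Lemma~\ref{lcoho}) merely makes explicit steps the paper leaves implicit.
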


\begin{proof}
Let $\hat \iota \colon \Hat X \ssetminus \cup_i E_i \to \Hat X$
be the inclusion map, and let $\eta$ define
a closed current $T$  on $\Hat X$.
The restriction of $T$
to $\Hat X \ssetminus \cup_i E_i$ is the current defined by
$(q \circ \hat \iota)^* \eta \in \Omega^2(\Hat X \ssetminus \cup_i E_i).$
Hence, $\hat \iota^* [T] = [ (q \circ \hat \iota)^* \eta] =
\hat \iota^* q^* [\eta]$, and so  the claim follows from Lemma~\ref{lcoho}(2).
\end{proof}

Next, we demonstrate a  relationship between holomorphic forms on $X$ and $\Hat X$.

\begin{Lemma} \label{l38}
Let $q \colon \Hat X \to X$ be the blow up of a $2$-dimensional
complex orbifold at isolated $\Z_2$ singularities.
Given a nonvanishing holomorphic form $\Hat \mu \in \Omega^{2,0}(\Hat X)$, there exists
a nonvanishing holomorphic orbifold form $\mu \in \Omega^{2,0}(X)$ such that
 $q^*\big([\mu]\big) = \big[\Hat\mu\big]$.
\end{Lemma}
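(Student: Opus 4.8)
The plan is to define $\mu$ on the complement of the singular points by pushing $\Hat\mu$ forward through $q$, to check that it extends across each $\Z_2$ singularity to a nonvanishing holomorphic orbifold form, and finally to read off the cohomological identity from Lemma~\ref{current}. Write $p_1,\dots,p_k$ for the singular points and $E_i = q\inv(p_i)$. Since $q$ restricts to a biholomorphism from $\Hat X\ssetminus\cup_i E_i$ onto $X\ssetminus\{p_1,\dots,p_k\}$, I would set $\mu := (q\inv)^*\Hat\mu$ there; this is a nonvanishing holomorphic $2$-form on $X\ssetminus\{p_1,\dots,p_k\}$ satisfying $q^*\mu=\Hat\mu$ on $\Hat X\ssetminus\cup_i E_i$. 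Everything then reduces to a local study near each $p_i$.

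Near $p_i$, I would lift $\mu$ through the orbifold chart $\pi\colon\C^2\to\C^2/\Z_2$. Because $\mu$ descends to the quotient, its pullback $\pi^*\mu$ is an automatically $\Z_2$-invariant holomorphic $2$-form on the punctured ball $\C^2\ssetminus\{0\}$. Since a single point has complex codimension two, Hartogs' extension theorem extends $\pi^*\mu$ to a holomorphic $2$-form on all of $\C^2$; by uniqueness the extension is still $\Z_2$-invariant, so it has the form $g\,dw_1\wedge dw_2$ with $g$ holomorphic and even. This shows $\mu$ is a holomorphic orbifold form on $X$, and reduces nonvanishing at $p_i$ to the claim $g(0)\neq 0$.

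The crux is to connect $g(0)$ to the nonvanishing of $\Hat\mu$ on $E_i$, and this is where the explicit model of the blow up above does the work. Using that model one computes the pullback of the standard form: in the chart $z_1\neq 0$ of $\Hat{\C^2/\Z_2}$, with the normalized coordinates $(v,s)=(z_2/z_1,\,z_1^2u)$, one finds
\begin{equation*}
q^*(dw_1\wedge dw_2)=\tfrac12\,ds\wedge dv,
\end{equation*}
which is a nowhere-vanishing holomorphic $2$-form \emph{across} the exceptional divisor $\{s=0\}$. Hence $q^*\mu=(g\circ q)\cdot\tfrac12\,ds\wedge dv$ extends holomorphically over $E_i$, and since $q$ collapses $E_i$ to $p_i$ the coefficient $g\circ q$ equals $g(0)$ along $E_i$. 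By uniqueness this extension agrees with $\Hat\mu$; as $\Hat\mu$ is nonvanishing at the points of $E_i$, we get $g(0)\neq 0$. Together with nonvanishing on the regular part, $\mu$ is nonvanishing on all of $X$. I expect this local computation — and the bookkeeping of the non-smooth map $q$ in it — to be the main obstacle.

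It remains to verify $q^*([\mu])=[\Hat\mu]$, which I would deduce from Lemma~\ref{current}. The form $\mu$ is closed, being holomorphic of top degree on a complex surface. Moreover $(q\circ\hat\iota)^*\mu=\hat\iota^*\Hat\mu$ on $\Hat X\ssetminus\cup_i E_i$, so the current defined by $\mu$ is simply $\psi\mapsto\int_{\Hat X}\psi\wedge\Hat\mu$ (the divisors $E_i$ have measure zero); this is well-defined, continuous, and closed, and it represents $[\Hat\mu]$. Finally $[\Hat\mu]$ lies in $I(X)_\R$: the restriction of the $(2,0)$-form $\Hat\mu$ to the complex curve $E_i$ vanishes for dimension reasons, so $\Hat\mu$ restricts to zero in $H^2(\cup_i E_i;\R)$. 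Lemma~\ref{current} then yields $q^*([\mu])=[\Hat\mu]$, completing the argument.
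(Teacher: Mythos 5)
Your proposal is correct and follows essentially the same route as the paper's proof: transport $\Hat\mu$ through the biholomorphism off the exceptional set, extend across each singular point by Hartogs (the paper's Lemma~\ref{extend}, which you reprove inline), verify nonvanishing at $p_i$ by the same local computation in the explicit blow-up model (your chart formula $q^*(dw_1\wedge dw_2)=\tfrac12\,ds\wedge dv$ is the affine-chart version of the paper's displayed identity for $\Hat\mu$), and deduce $q^*([\mu])=[\Hat\mu]$ from the injectivity of restriction to the complement of the exceptional divisors. The only cosmetic difference is that you route the cohomological step through Lemma~\ref{current}, while the paper invokes Lemma~\ref{lcoho}(2) directly, but Lemma~\ref{current} is itself just a wrapper around that same injectivity statement.
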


\begin{proof}
Let $p_1,\dots,p_k \in X$ be the isolated $Z_2$ singularities.
Let $\hat \iota \colon \Hat X \ssetminus \cup_i E_i \to \Hat X$ denote
the natural inclusion, where $E_i := q\inv(p_i)$ for each $i$.
Since $q$ restricts to a biholomorphism from $\Hat X \ssetminus \cup_i E_i$
to $X \ssetminus \{p_i\}_i$, 
there exists a holomorphic $2$-form on $X \ssetminus \{p_i\}_i$  whose pullback under this biholomorphism  is $\hat \iota^* \Hat \mu$. 
 By Lemma~\ref{extend} below
we can extend this form to a holomorphic $2$-form $\mu$ on $X$
such that $(q \circ \hat \iota)^* \mu = \hat \iota^* \Hat \mu$.
Moreover, holomorphic $2$-forms on complex surfaces are closed; hence, $\hat\iota^* q^* [\mu] = \hat \iota^* [\Hat \mu]$.
Since each $E_i$ is a complex curve,  
the restriction of $[\Hat \mu]$ to $\cup_i E_i$ vanishes.
Thus, 
 $q^*\big([\mu]\big) = \big[\Hat\mu\big]$ by Lemma~\ref{lcoho}(2).

Let $U_i$ be  an open neighborhood of $p_i$ that lifts to an open set $\Tilde U_i \subset \C^2$ in an orbifold chart around $p_i$.
Identify $U_i$ with $\Tilde U_i/\Z_2$ and
$\Hat U_i := q^{-1}(U_i)$ with the preimage of $\Tilde U_i/\Z_2$  in $\Hat{\C^2/\Z_2}$.
Let $\Tilde \mu := \Tilde f dz_1 \wedge dz_2 \in \Omega^2(\Tilde U)$
be the pullback of $\mu$ to this orbifold chart, where $\Tilde f$ is a  $\Z_2$ invariant holomorphic function. 
Since $(q \circ \hat \iota)^*\mu = \hat \iota^* \Hat \mu$,
$$ \textstyle \Hat \mu  = \frac{1}{2}\Tilde f( \sqrt{u} z_1, \sqrt{u} z_2) \left(  z_1 du \wedge d z_2 +  z_2 d z_1 \wedge du + 2u dz_1 \wedge d z_2 \right)$$
on $\Hat U_i \smallsetminus E_i$.
Since $\Tilde \mu$ and $\Hat \mu$ are both continuous, this equation
holds on all of $\Hat U_i$.
Therefore, since  $\Hat \mu$ doesn't vanish on $\Hat U_i$,  
the form $\mu$ doesn't vanish on $\U_i$.
Therefore, $\mu$ is nonvanishing.

\end{proof}

Here, we used the following variant of Hartogs' extension theorem.

\begin{Lemma}\labell{extend}
Let $X$ be a complex orbifold of dimension at least $2$ with isolated singularities.
Then every holomorphic $m$-form $\mu'$ on the regular part
of $X$ can be extended uniquely to a holomorphic
orbifold $m$-form $\mu$ on $X$.
\end{Lemma}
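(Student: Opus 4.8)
The plan is to reduce the statement to a local problem near each isolated singular point and then invoke the classical Hartogs extension theorem, using crucially that the singular set has complex codimension at least two.

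First, I would observe that uniqueness is immediate. The regular part of $X$ is the complement of a discrete set, hence dense, so any two holomorphic orbifold $m$-forms that restrict to $\mu'$ on the regular part have pullbacks, on every orbifold chart, that agree on a dense open subset; by continuity they coincide. Thus it suffices to produce an extension. Since on charts around regular points the extension is forced to equal $\mu'$, the only content is to extend $\mu'$ across each isolated singular point $p$.

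Next, I would fix such a $p$ together with an orbifold chart $\pi \colon \Tilde U \to \Tilde U/\Gamma = U$ around it, where $\Tilde U \subset \C^n$ is an open set invariant under the linear action of the finite group $\Gamma$, with $n \geq 2$. Since the singularity is isolated and the action is linear, any nonidentity element of $\Gamma$ fixing a nonzero vector would fix an entire complex line, producing non-isolated singular points; hence $\Gamma$ acts freely on $\Tilde U \ssetminus \{0\}$, and $\pi$ restricts to a covering of $U \ssetminus \{p\}$. Pulling back $\mu'|_{U \ssetminus \{p\}}$ under $\pi$ then yields a $\Gamma$-invariant holomorphic $m$-form on $\Tilde U \ssetminus \{0\}$; writing it in the coordinates $z_1,\dots,z_n$, its coefficients are holomorphic functions on $\Tilde U \ssetminus \{0\}$.

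Finally, because $n \geq 2$, the removed set $\{0\}$ has complex codimension at least two, so by Hartogs' extension theorem each coefficient extends holomorphically across $0$. This produces a holomorphic $m$-form $\Tilde\mu$ on all of $\Tilde U$; by continuity $\Tilde\mu$ remains $\Gamma$-invariant, hence defines an orbifold holomorphic $m$-form on $U$ whose restriction to the regular part is $\mu'$. These local forms agree with $\mu'$, and hence with one another, on overlaps, so they patch together into a global orbifold holomorphic $m$-form $\mu$ extending $\mu'$. The one point demanding real care — and the reason for the hypothesis $\dim X \geq 2$ — is precisely this codimension bound that makes Hartogs applicable; the remaining verifications, that pullback, $\Gamma$-invariance, and holomorphicity all survive the extension, are routine.
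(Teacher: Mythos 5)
Your proposal is correct and follows essentially the same route as the paper's proof: pull the form back to an orbifold chart around each isolated singular point, extend each coefficient function across the origin by Hartogs' extension theorem (valid since $\dim_\C X \geq 2$), and deduce $\Gamma$-invariance of the extension from uniqueness. The extra details you supply (uniqueness via density of the regular part, freeness of the $\Gamma$ action away from the origin) are points the paper leaves implicit, but the argument is the same.
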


\begin{proof}
Let $(\Tilde V, \Gamma)$ be a local orbifold chart near  an
isolated singularity $p \in X$.
Let $\Tilde \mu' := \sum_{i_1 < \dots <  i_m} f'_{i_1 \dots i_m}
 dz_{i_1} \wedge \dots \wedge d z_{i_m} \in \Omega^{m,0}(\Tilde V \ssetminus \{0\})$
be the pullback of $\mu'$ to this chart,
where each $f'_I := f'_{i_1\dots i_m}$ is a holomorphic function on $\Tilde V \ssetminus \{0\}$.
By Hartogs' extension theorem, we can extend each $f'_I$ uniquely to a holomorphic function $f_I$ on $\Tilde V$.
Thus, $\Tilde \mu := \sum_{i_1 < \dots <  i_m} f_{i_1 \dots i_m}
 dz_{i_1} \wedge \dots \wedge d z_{i_m}$
is the unique holomorphic extension of $\Tilde \mu'$ to $\Tilde V$.
Taken together with the fact that $\Tilde \mu'$ is $\Gamma$ invariant,
this implies that
$\Tilde \mu$ is also invariant.
\end{proof}

Finally, we demonstrate a  relationship between holomorphic  bundles on $X$ and $\Hat X$.

\begin{Definition}\label{d37}
Let $\cL$ be a holomorphic $\C^\times$  bundle over a complex orbifold $X$ and let $h$ be a hermitian metric on the associated line bundle containing $\cL$.
There exists a real orbifold form in $\Omega^{1,1}(X)$ with local expression 
$$\zeta = \textstyle \frac{\sqrt{-1}}{2 \pi} \del \delbar s^*(\Psi)$$
for every local holomorphic section $s$ of $\cL$, where
$\Psi(m) := \ln |m|^2$ for all $m \in \cL$. (Here, we identify $\cL$ with a subset of the associated line bundle.)
The \textbf{Chern curvature} of $h$ is the orbifold form $2 \pi \sqrt{-1} \, \zeta \in \Omega^{1,1}(X)$. 
The \textbf{Euler class} of $\cL$ is the cohomology class $e(\cL) := -[\zeta] 
\in H^{1,1}(X)$, which does not depend on the choice of the hermitian metric $h$.
\end{Definition}

\begin{Lemma} \label{l39}
Let $q \colon \Hat X \to X$ be the blow up of a $2$-dimensional 
complex orbifold at isolated $\Z_2$ singularities $p_1,\dots,p_k$.
Given a holomorphic $\C^\times$ bundle $\Hat \cL$ over $\Hat X$
there exists a holomorphic $\C^\times$ bundle $\cL$ over $X$
so that $e(\cL) \in H^{1,1}(X)$ is the unique class with
$$q^*(e(\cL)) = e(\Hat \cL)  + 
\textstyle
\sum_i 
\frac{1}{2}
\cE_i 
\int_{E_i}  \!  e(\Hat \cL),$$
where 
$\cE_i  \in H^{1,1}(\Hat X;\R)$ is the Poincar\'e dual to 
$E_i := q\inv(p_i)$ for each $i$.
Moreover, $\cL$ is a manifold exactly if $\int_{E_i} \! e(\Hat \cL)$ is odd for all $i$.
\end{Lemma}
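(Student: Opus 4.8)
The plan is to build $\cL$ by transplanting $\Hat\cL$ across the biholomorphism $q$ and extending it over the singular points, then to pin down its Euler class purely cohomologically and to read off the manifold criterion from a local model.

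First I would construct $\cL$. Since $q$ restricts to a biholomorphism from $\Hat X \ssetminus \cup_i E_i$ onto $X \ssetminus \{p_i\}_i$, the restriction of $\Hat\cL$ transports to a holomorphic $\C^\times$ bundle $\cL_0$ on $X \ssetminus \{p_i\}_i$. To extend $\cL_0$ across each $p_i$, I would work in an orbifold chart $\Tilde U_i \subset \C^2$: there the pullback of $\cL_0$ is a $\Z_2$ equivariant holomorphic $\C^\times$ bundle on $\Tilde U_i \ssetminus \{0\}$, which extends uniquely across the origin because a holomorphic line bundle extends across an analytic set of codimension $\geq 2$ and is trivial on a ball, while the $\Z_2$ action extends by continuity. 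This produces a holomorphic $\C^\times$ orbifold bundle $\cL$ over $X$, canonically determined by $\Hat\cL$, whose restriction to $X \ssetminus \{p_i\}_i$ is $\cL_0$.

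Next I would establish the Euler class formula. Write $d_i := \int_{E_i} e(\Hat\cL)$ and set $\alpha := e(\Hat\cL) + \sum_i \tfrac12 \cE_i\, d_i \in H^{1,1}(\Hat X;\R)$. Because the $E_i$ are disjoint and each resolves an isolated $\Z_2$ (i.e.\ $A_1$) singularity, $\int_{E_j}\cE_i = E_i \cdot E_j = -2\,\delta_{ij}$; hence $\int_{E_j}\alpha = d_j + \tfrac12(-2)d_j = 0$ for every $j$, so $\alpha \in I(X)_\R$. By the real form of Lemma~\ref{lcoho}(1) the map $q^*$ is injective on $H^2(X;\R)$ with image $I(X)_\R$, so there is a unique $\epsilon \in H^2(X;\R)$ with $q^*\epsilon = \alpha$; this gives the asserted uniqueness. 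It remains to check $q^*e(\cL) = \alpha$. Both sides lie in $I(X)_\R$: the left by Lemma~\ref{lcoho}(1), the right by the computation above. Letting $\hat\iota \colon \Hat X \ssetminus \cup_i E_i \to \Hat X$ be the inclusion, we have $\hat\iota^* q^* e(\cL) = \hat\iota^* e(\Hat\cL)$ since $\cL$ and $\Hat\cL$ agree via $q$ away from the exceptional divisors, while $\hat\iota^*\alpha = \hat\iota^* e(\Hat\cL)$ because each $\cE_i$ is Poincar\'e dual to $E_i$ and so restricts to zero on the complement. Since $\hat\iota^*$ is injective on $I(X)_\R$ by Lemma~\ref{lcoho}(2), we conclude $q^* e(\cL) = \alpha$, whence $e(\cL) = \epsilon \in H^{1,1}(X)$.

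Finally I would treat the manifold criterion via the local model $\Hat U_i := q^{-1}(U_i) \cong \text{Tot}(\cO(-2) \to \CP^1)$, in which $E_i$ is the zero section and $\Hat U_i \ssetminus E_i$ is doubly covered by $\Tilde U_i \ssetminus \{0\} \cong \C^2 \ssetminus \{0\}$. The restriction of $\Hat\cL$ has degree $d_i$ on $E_i$; although it need not be holomorphically isomorphic to the pullback of $\cO(d_i)$, the equivariant character is a discrete, hence deformation invariant, quantity, so it may be computed for the model bundle $p^*\cO(d_i)$ under $\C^2 \ssetminus \{0\} \to \CP^1$. A direct computation with the sections $w_1, w_2$ then shows that the extension across the origin carries the character $\chi_i = (-1)^{d_i}$ of $\Z_2$ on its central fiber, and the total space of the $\C^\times$ bundle $\cL$ is a manifold near $p_i$ exactly when $\Z_2$ acts freely there, i.e.\ when $\chi_i = -1$; thus $\cL$ is a manifold iff every $d_i$ is odd. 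The main obstacle is precisely this last step: correctly setting up the blow-up model and its double cover, tracking the $\Z_2$ equivariant structure through the extension across the origin, and verifying that a \emph{smooth} total space corresponds to the \emph{sign} character (hence to \emph{odd} degree) — one must check, for instance, that the naive invariant trivialization yields a non-free action and an orbifold point, so that freeness forces $\chi_i = -1$.
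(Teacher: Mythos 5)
Your cohomological middle step (pinning down $e(\cL)$ via $\int_{E_j}\cE_i=-2\delta_{ij}$, Lemma~\ref{lcoho}, and injectivity of $\hat\iota^*$ on $I(X)_\R$) is correct and is essentially identical to what the paper does. The genuine gap is in your construction of $\cL$, i.e.\ the extension of the transported bundle across each singular point. The principle you invoke --- that a holomorphic line bundle extends across an analytic set of codimension $\geq 2$ --- is \emph{false} in complex dimension two: for a ball $B \subset \C^2$, the exponential sequence together with $H^1(B\ssetminus\{0\};\Z)=H^2(B\ssetminus\{0\};\Z)=0$ gives $\operatorname{Pic}(B\ssetminus\{0\})\cong H^1(B\ssetminus\{0\},\cO)$, which is an infinite-dimensional space (Laurent monomials $z_1^{-a}z_2^{-b}$); and since every line bundle on $B$ itself is trivial, none of these nontrivial bundles extends. (The codimension-$\geq 2$ extension statement you have in mind holds for codimension $\geq 3$, or in the algebraic category, but not here.) So the triviality of your $\Z_2$-equivariant bundle on $\Tilde U_i \ssetminus \{0\}$ is precisely what must be proved, not a consequence of a general principle. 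The same missing input undermines your last step: you reduce the computation of the $\Z_2$-character to the model bundle $p^*\cO(d_i)$ by appealing to ``deformation invariance,'' but without knowing that holomorphic bundles on $\Hat U_i$ (or on the punctured chart) with a given degree are unique up to isomorphism, there is no family connecting your bundle to the model, so the reduction is unjustified --- indeed it is circular, since that uniqueness is equivalent to the classification you are implicitly assuming.

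The paper fills both gaps with a single cohomological computation: $H^1(\Hat U_i,\cO)=0$, proved by covering $\Hat U_i$ by two Stein charts and cancelling Laurent series. Via the exponential sequence this shows that a holomorphic $\C^\times$ bundle on $\Hat U_i$ is determined up to isomorphism by its degree on $E_i$, hence $\Hat\cL|_{\Hat U_i}\simeq \Hat D_{d_i}|_{\Hat U_i}$ for the explicit model $\Hat D_n := (\C^2\ssetminus\{0\})\times_{\C^\times}(\C\times\C^\times)$. That model visibly descends to $D_n := \C^2\times_{\Z_2}\C^\times$ over $U_i$, which simultaneously produces the extension of $\cL'$ across $p_i$ (by gluing) and makes the manifold criterion immediate: $D_n$ is a manifold exactly when $n$ is odd, since the $\Z_2$ action $(z,w)\mapsto(-z,(-1)^n w)$ is free exactly then. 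If you want to rescue your outline, you must either prove the vanishing $H^1(\Hat U_i,\cO)=0$ (or equivalently that the pullback of $\Hat\cL$ to $\Tilde U_i\ssetminus\{0\}$ is holomorphically trivial) or find a substitute argument; as written, the central technical difficulty of the lemma has been assumed away.
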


\begin{proof}
Fix $n \in \Z$.
Let  
$\Hat{\C^2/\Z_2} \to \C^2/\Z_2$ 
be the blow up of $\C^2/\Z_2$ at $[0]$, and let
$E := (\C^2 \smallsetminus \{0\}) \times_{\C^\times} \{0\}$ be the exceptional divisor.
Define a holomorphic
$\C^\times$  bundle $\Hat D_n$ over 
$ \Hat{\C^2/\Z_2}$ 
with 
$\int_E e(\Hat D_n) = n$
by
$$ \Hat D_n  := \left( \C^2 \ssetminus \{0\} \right) \times_{\C^\times} (\C \times \C^\times),$$ 
where $\C^\times$ acts by $$\lambda \cdot (z_1,z_2;u; w) = (\lambda z_1, \lambda z_2; \lambda^{-2} u; \lambda^n w).$$
Similarly,  define a holomorphic $\C^\times$ bundle $D_n$ over 
$\C^2/\Z_2$  by 
$$ D_n := \C^2 \times_{\Z_2} \C^\times,$$
where $\Z_2$ acts by  $$\pm 1 \cdot (z_1,z_2; w) = ( \pm z_1, \pm z_2; (\pm 1)^n w).$$
Clearly, $D_n$ is a manifold exactly if $n$ is odd.
The natural biholomorphism from
$\Hat {\C^n/\Z_2} \ssetminus E$ to $\C^2/\Z_2 \ssetminus \{[0]\}$
lifts to an isomorphism of bundles
\begin{equation}\labell{iso1}
\Hat D_n \big|_{ \big(\Hat{C^2/\Z_2} \ssetminus E\big)} \simeq D_n \big|_{\big(\C^2/\Z_2 \ssetminus \{[0]\}\big)};
\end{equation}
it sends
$[z_1,z_2;u;w]$ to $[\sqrt{u} z_1,\sqrt{u} z_2; (\sqrt{u})^n w]$.

Fix $i \in \{1,\dots,k\}$,
and
let $U_i$ be  an open neighborhood of $p_i$ that lifts to a disk in an orbifold chart.
Then $E_i$ is a deformation retract of  $\Hat U_i := q^{-1}(U_i)$,
and so
$e\big( \Hat D_{\int_{E_i} \!  e(\Hat \cL)}\big)\big|_{\Hat U_i}  
= e(\Hat \cL)|_{\Hat U_i}.$  
Moreover, 
by the lemma below, $H^1(\Hat U_i,\mathcal O) = 0$,
where $\mathcal O$ denote the sheaf of holomorphic functions.
Therefore, there is an isomorphism of bundles over $\Hat U_i$,
\begin{equation}\labell{iso2}
\Hat D_{\int_{E_i} \!  e(\Hat \cL)}\big|_{\Hat U_i}
\simeq 
 \Hat \cL\big|_{\Hat U_i}. 
\end{equation}

Since $q$ restricts to a biholomorphism from
$\Hat X \ssetminus \cup_i E_i$ to $X \ssetminus \{p_i\}_i$,
there exists a holomorphic
$\C^\times$ bundle $\cL'$ over $X \ssetminus \{p_i\}_i$ such that
$q^*\cL' \simeq \hat \iota^* \Hat \cL$,
where $\hat \iota \colon \Hat X \ssetminus \cup_i E_i \to \Hat X$
is the inclusion map.
Moreover,  by \eqref{iso1} and \eqref{iso2},
the bundles $\cL'$ and  $D_{\! \int_{E_i} \!  e(\Hat \cL)}$  
are isomorphic 
over $U_i \ssetminus \{p_i\}$ for each  $i$.
Therefore, by gluing together the bundle $\cL'$
and the 
 bundle  $D_{\! \int_{E_i} \! e(\Hat \cL)}$  over $U_i$
for each $i$, 
we  construct a holomorphic $C^\times$
bundle $\cL$ over $X$ so that
$(q \circ \hat \iota)^* \cL \simeq \hat \iota^* \Hat \cL$.
Since each $\cE_j$ vanishes on $\Hat X\ssetminus \cup_i E_i$,
this implies that 
$$\hat \iota^* q^* e(\cL) = \hat \iota^* e(\Hat \cL) = \hat \iota^*\big( e(\Hat \cL)
+ \textstyle \sum_i \frac{1}{2} \cE_i \int_{E_i} \! e(\Hat \cL) \big).$$
Moreover, since $\int_{E_j} \! \cE_i = -2 \delta_{ij}$ for all $i, j$,
the class
$e( \Hat \cL) + \sum_i \frac{1}{2} \cE_i \int_{E_i} \!  e( \Hat \cL) $
vanishes when restricted to $\cup_i E_i$.
Therefore, the claim follows from Lemma~\ref{lcoho}.
\end{proof}

Here, we used the following calculation.

\begin{Lemma}
Let $q \colon \Hat{ \C^2/\Z_2} \to \C^2/\Z_2$ be the  blow up of $\C^2/\Z_2$ at $[0]$.
Given $\epsilon > 0$, let $U := \{ [z] \in \C^2/\Z_2 \mid |z|^2 < \epsilon\}$
and $\Hat U := q^{-1}(U)$.  
Then $H^{1}(\Hat U,\cO) = 0$,
where $\cO$ denotes the
sheaf of holomorphic functions.
\end{Lemma}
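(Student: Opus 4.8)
The plan is to use the explicit description from the preceding lemma to realize $\Hat U$ as a disk subbundle of a line bundle over $\CP^1$, and then to compute its cohomology by a two-chart \v{C}ech argument. Recall that $\Hat{\C^2/\Z_2} = (\C^2 \ssetminus \{0\}) \times_{\C^\times} \C$ is the total space of the line bundle $\calO(-2)$ over $\CP^1 = (\C^2 \ssetminus \{0\})/\C^\times$, with $u$ the fiber coordinate and the exceptional divisor $E$ equal to the zero section. Writing $\pi \colon \Hat{\C^2/\Z_2} \to \CP^1$ for the projection $[z_1,z_2;u] \mapsto [z_1,z_2]$, and noting that $q$ takes $[z_1,z_2;u]$ to $[\sqrt{u}z_1,\sqrt{u}z_2]$, so that $|z|^2$ pulls back to $|u|(|z_1|^2+|z_2|^2)$, we see that $\Hat U = q^{-1}(U)$ is exactly the open disk subbundle $\{[z_1,z_2;u] : |u|(|z_1|^2+|z_2|^2) < \epsilon\}$, which deformation retracts onto $E \cong \CP^1$.

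Next I would cover $\CP^1$ by the two affine charts $V_0 = \{z_1 \neq 0\}$ and $V_1 = \{z_2 \neq 0\}$, with coordinate $w = z_2/z_1$ on $V_0$ and $1/w$ on $V_1$, and corresponding fiber coordinates $u_0 = u$ and $u_1 = w^2 u_0$. Setting $\Hat U_j := \Hat U \cap \pi^{-1}(V_j)$, a direct computation gives $\Hat U_0 = \{(w,u_0) : |u_0|(1+|w|^2) < \epsilon\}$; since $\log|u_0| + \log(1+|w|^2)$ is plurisubharmonic, this is a sublevel set of a plurisubharmonic function, hence pseudoconvex and therefore Stein. By symmetry $\Hat U_1$ is Stein, and the overlap $\Hat U_{01} = \Hat U_0 \cap \{w \neq 0\}$ is an intersection of two Stein open subsets of $\C^2$, hence Stein. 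By Cartan's Theorem B the cover $\{\Hat U_0, \Hat U_1\}$ is acyclic for $\cO$, so Leray's theorem identifies $H^1(\Hat U, \cO)$ with the \v{C}ech cohomology of this two-set cover, that is, with the cokernel of
\[
\cO(\Hat U_0) \oplus \cO(\Hat U_1) \to \cO(\Hat U_{01}), \qquad (f_0,f_1) \mapsto f_0 - f_1 .
\]

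To show this cokernel vanishes I would expand in the fiber coordinate. Every $g \in \cO(\Hat U_{01})$ has a convergent expansion $g = \sum_{n \geq 0} c_n(w)\, u_0^n$, with each $c_n$ holomorphic on the annulus $V_0 \cap V_1 = \C^\times$; writing the Laurent series $c_n(w) = \sum_j c_{n,j} w^j$, I split off $a_n(w) := \sum_{j \geq 0} c_{n,j} w^j$, which converges on all of $\C = V_0$, and absorb the remaining negative powers into $b_n(w') := -\sum_{j < 0} c_{n,j} (w')^{2n - j}$, which, since $2n \geq 0$, is a genuine power series in $w' = 1/w$ that is holomorphic on $V_1$. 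This gives $c_n = a_n - w^{2n}\, b_n(1/w)$, and the possibility of such a splitting for every $n$ is precisely the statement that $H^1(\CP^1, \calO(2n)) = 0$ for all $n \geq 0$. Assembling $f_0 := \sum_n a_n(w)\, u_0^n$ and $f_1 := \sum_n b_n(w')\, u_1^n$ then yields $g = f_0 - f_1$, exhibiting $g$ as a coboundary and forcing $H^1(\Hat U, \cO) = 0$.

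The step I expect to be the main obstacle is not the degree-by-degree splitting, which is elementary and purely about Laurent series, but the \emph{convergence} of the reassembled series $f_0$ and $f_1$ on the full charts $\Hat U_0$ and $\Hat U_1$. Passing to the non-negative part of a Laurent coefficient can increase its size near the boundary of the relevant disk, so one must control these operations uniformly in $n$, for instance by Cauchy estimates adapted to the shrinking fiber radii $\epsilon/(1+|w|^2)$. Alternatively, one can bypass the estimates entirely: the zero section $E$ has negative normal bundle $\calO(-2)$, so $q|_{\Hat U} \colon \Hat U \to U$ is a \emph{proper} modification contracting $E$ to the singular point, whence Grauert's direct image theorem makes $R^1 (q|_{\Hat U})_* \cO$ a coherent sheaf supported at that point; the graded computation above shows it vanishes, and since $U$ is Stein the Leray spectral sequence then gives $H^1(\Hat U, \cO) = 0$.
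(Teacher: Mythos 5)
Your elementary route is structurally the same as the paper's proof: you use the same two charts (your $\Hat U_0 = \{(w,u_0) : |u_0|(1+|w|^2) < \epsilon\}$ is literally the paper's domain $A$, under $(z_1,z_2)=(w,u_0)$), the same Stein-plus-Leray reduction to a two-set \v{C}ech computation, and the same degree-by-degree Laurent splitting, which in the paper appears as $f = g - (g-f)$ with the negative part transported to the other chart by $\phi_1^{-1}\circ\phi_2$. The problem is that you stop exactly where the paper does its real work. The convergence of the reassembled series $f_0=\sum_n a_n(w)\,u_0^n$ and $f_1=\sum_n b_n(w')\,u_1^n$ on the full charts is not a side issue to be deferred: it is the one nontrivial analytic step, you correctly identify it as ``the main obstacle,'' and you do not close it, so as written the argument is incomplete. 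The paper closes it with no estimates at all, and this is worth internalizing: it expands $\phi_1^*(\tilde f)$ as a single two-variable Laurent series on $A'=A\cap(\C^\times\times\C)$, splits off the Taylor parts $g$ and $h$, and then uses the fact that the domain of convergence of a \emph{power} series is a complete (logarithmically convex) Reinhardt open set; since $g$ and $h$ converge on $A'$ and $A$ is open with $A'=A\cap(\C^\times\times\C)$, the points of $A$ with $w=0$ are automatically swallowed by the domains of convergence. Your proposed alternative via Cauchy estimates adapted to the shrinking fiber radii can indeed be carried out (for a point with $|w_0|<r$ and $|u_0^{(0)}|<\epsilon/(1+r^2)$ one bounds $|a_n(w_0)|$ by $\frac{r}{r-|w_0|}\sup_{|\zeta|=r,\,|u|=\rho}|g|\cdot\rho^{-n}$ and sums a geometric series, choosing $r$ slightly larger than $|w_0|$), but it has to be done, not merely named.

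Your fallback via Grauert is a genuinely different approach --- it is the standard argument that the $A_1$ singularity is rational --- and it would buy a proof with no convergence analysis, but as stated it has its own gap. Coherence of $R^1(q|_{\Hat U})_*\cO$ plus support at the singular point gives a finite-dimensional stalk, but ``the graded computation above shows it vanishes'' is not a proof. The graded computation gives $H^1(\CP^1,\calO(2n))=0$ for $n\geq 0$, and these are the graded pieces of the $I_E$-adic filtration (the conormal bundle of $E$ is $\calO(2)$, so $I_E^n/I_E^{n+1}\simeq\calO(2n)$); to convert this into vanishing of the stalk you still need Grauert's comparison theorem (the theorem on formal functions) to identify the completed stalk with $\varprojlim_n H^1\big(E_n,\cO_{E_n}\big)$ over infinitesimal neighborhoods, an induction over those neighborhoods using the exact sequences $0\to I_E^n/I_E^{n+1}\to\cO_{E_{n+1}}\to\cO_{E_n}\to 0$, and finally Nakayama to pass from vanishing of the completion to vanishing of the finitely generated stalk. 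That chain is standard machinery, but it is the substantive content of this route; without it, neither of your two arguments actually reaches the conclusion $H^1(\Hat U,\cO)=0$.
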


\begin{proof}
In coordinates,  $\Hat U  = \{ [z_1,z_2;u ] \mid |u||z|^2 < \epsilon \}$.
Define
$$\Hat W_1 :=  \{ [z_1,z_2;u] \in \Hat U \mid z_1 \neq 0 \} \quad \mbox{and} \quad
\Hat W_2 :=  \{ [z_1,z_2;u] \in \Hat U \mid z_2 \neq 0 \}.$$
Define maps 
$\phi_1$ and $\phi_2$ from  $\C^2$ to $\Hat{ \C^2/\Z_2}$ by 
$\phi_1(z_1,z_2) := [1,z_1;z_2]$
and $\phi_2(z_1,z_2) := [z_1, 1; z_2]$.
Then each
 $\phi_i$ restricts to a biholomorphism
from $A := \{ x \in \C^2 \mid |z_2|(1 + |z_1|^2) < \epsilon \}$
to $\Hat W_i$, and from $A' := A \cap (\C^\times \times\C)$ to $\Hat W_1 \cap \Hat W_2$.

First we claim $H^i(\Hat U,\cO)=H^i(\{\Hat W_1, \Hat W_2\},\cO)$, where $H^i(\{\Hat W_1, \Hat W_2\},\cO)$ denotes the $\textrm{\v{C}ech}$ cohomology of the cover $\{\Hat W_1,\Hat W_2\}$ with coefficients in $\cO$.
Since the function $t \mapsto \ln \epsilon - \ln(1+e^{2t})$ has negative  second derivative,
the set $A$ is a logarithmically convex (relatively) complete Reinhardt domain \cite[{\bf I.3}]{A}.
Hence, by \cite[{\bf I.5.15} and {\bf I.5.16}]{A}, $A$ is holomorphically convex.
Since every open subset of $\mathbb{C}^2$ satisfies the remaining conditions of Definition of 5.1.3 of \cite{H}, this implies that 
$A$ is a Stein manifold; hence, $\Hat W_1$ and $\Hat W_2$ are as well.
The claim now follows by Theorem 7.4.1 of \cite{H}.

Hence, to complete the proof, it is enough to prove $H^1(\{\Hat W_1,\Hat W_2\}, \cO)$
$= 0$.
So consider $\tilde{f} \in C^1(\{\Hat W_1,\Hat W_2\},\cO)= \cO(\Hat W_1 \cap \Hat W_2)$.
Since $A'$ is a Reinhardt domain containing $\C^\times \times \{0\}$,
 there exist unique constants
$f_{ij} \in \C$ such that $A'$ is contained in the domain of convergence of the Laurent series
$$f:=\sum_{i=-\infty}^\infty \sum_{j=0}^\infty f_{ij} z_1^i z_2^j$$
and the series converges to the holomorphic function $\phi_1^*(\tilde f)$ on $A'$ \cite[\bf I.6.3]{A}.
Therefore, $A'$ is contained in the domains of convergence of the Laurent series
$$ g:=\sum_{i=0}^\infty \sum_{j=0}^\infty f_{ij} z_1^i z_2^j \quad \textrm{and} \quad g-f=-\sum_{i=-\infty}^{-1} \sum_{j=0}^\infty f_{ij} z_1^i z_2^j.$$
Since $\phi_1^{-1} \circ \phi_2$ is an isomorphism from $A'$ to itself, this implies that $A'$ is contained in the domain of convergence of the Taylor series
$$h:=(\phi_1^{-1} \circ \phi_2)^*(g-f)=-\sum_{i=-\infty}^{-1} \sum_{j=0}^\infty f_{ij} z_1^{2j -i} z_2^j=-\sum_{j=0}^\infty \sum_{k=2j+1}^\infty f_{kj} z_1^k z_2^j.$$
Since $A$ is open and $A'=A \cap (\mathbb{C}^\times \times \mathbb{C})$, this implies that $A$ is contained in the domains of convergence of the Taylor series $g$ and $h$.
Therefore, there exist $\tilde g \in \cO(\Hat W_1)$ and $\tilde h \in \cO(\Hat W_2)$ such that $g$ converges to $\phi_1^*(\tilde g)$ and $h$ converges to $\phi_2^*(\tilde h)$. 
(See, e.g., \cite[{\bf I.2A} and {\bf I.2B}]{A}.)
Hence, $\tilde f = \tilde g - \tilde h$  on $\Hat W_1 \cap \Hat W_2$, that is,
$\tilde f = \delta(\tilde g,\tilde h)$, as required. \end{proof}

\section{Generalized K3 surfaces}
\labell{K3}

In this section, we use Kobayashi and Todorov's classification of K\"ahler generalized K3 surfaces
to obtain symplectic orbifold forms on
generalized K3 surfaces with isolated $\Z_2$ singularities.
These spaces will serve as the reduced spaces of the six-dimensional symplectic manifolds with Hamiltonian circle actions that
we construct in the next two sections. More concretely, we prove Proposition~\ref{propKT} below.

Recall that a \textbf{K3 surface} is a  closed, connected complex surface $(\Hat X, \Hat I)$
with $H^1(\Hat X;\mathbb{R})=\{0\}$ and trivial canonical bundle.
A closed complex surface $(X,I)$ with at worst simple singular points and with minimal resolution $q \colon \Hat X \to X$ is a {\bf generalized K3 surface} if $\Hat X$ is a K3 surface.
Here, a proper morphism $q \colon Y \to X$ is a \textbf{resolution} (of the singularities) of $X$ if $Y$ is non-singular and
$q$ restricts to an isomorphism from $Y \setminus q^{-1}(X_\mathrm{sing})$ to $X \setminus X_{\textrm{sing}}$.
A resolution $q \colon Y \to X$ of  $X$ is a \textbf{minimal resolution} if for every resolution $q' \colon Y' \to X$ of $X$ there exists a unique morphism $\Xi:Y' \to Y$ such that $q' = q \circ \Xi$.
In particular, if $X$ is a generalized K3 surface with isolated $\mathbb{Z}_2$ singularities, then the blow up $q \colon \Hat X \to X$ of $X$ at the singular points is the minimal resolution of $X$, and so $\Hat X$ is a K3 surface.
Here, to see that $q$ is the minimal resolution, note that $q^{-1}(p_i)$ does not contain a $(-1)$-curve for any $i$; see, e.g., \cite{I}.

\begin{Proposition}\labell{propKT}
Fix $k \in \{0,1,\dots,9\}$, a positive even integer $A$,
and an even integer $B > -\frac{1}{2} k$.
There exist a generalized K3 surface $(X,I)$ with $k$ isolated $\Z_2$ singularities,	
a K\"ahler orbifold form $\zeta \in \Omega^{1,1}(X)$,
and a real symplectic orbifold form $\nu \in \Omega^{2,0}(X) \oplus \Omega^{0,2}(X)$
so that $(\nu + t \zeta ,\nu + t \zeta) = A + ( B + \frac{1}{2} k) t^2$ for all $t \in \R$. 
Moreover, $q^*([\nu + 2\ell \zeta])$ is primitive\footnote{In particular,
it lies in $H^2(\Hat X;\Z) \subset H^2(\Hat X;\R)$.} for all $\ell \in \Z$, where $q \colon \Hat X \to X$ is the blow up of $X$ at the singular points.
Finally, there exists a holomorphic $\C^\times$ bundle $\cL$ over $X$ 
with Euler class $- [\zeta]$
so that $\cL$ is a manifold.
\end{Proposition}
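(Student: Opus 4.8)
The plan is to construct everything on a smooth K3 surface $\Hat X$ and then transport it to the orbifold $X$ through the blow-down $q \colon \Hat X \to X$, using the descent results of Section~\ref{orbifold}. Concretely, I would produce a K3 surface $\Hat X$ containing $k$ pairwise disjoint smooth rational curves $E_1,\dots,E_k$ of self-intersection $-2$, together with prescribed cohomology; contracting the $E_i$ yields a generalized K3 surface $X$ with exactly $k$ isolated $\Z_2$ (equivalently $A_1$) singularities whose minimal resolution is $q$. The form $\nu$ will descend from a holomorphic symplectic form on $\Hat X$, the bundle $\cL$ from a line bundle on $\Hat X$, and $\zeta$ will be a K\"ahler orbifold form in the descended class; the quadratic identity and the primitivity claim then reduce to intersection computations on $\Hat X$, with the fractional term $\tfrac12 k$ appearing automatically through Lemma~\ref{l39}.

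For the cohomological input I would work in the K3 lattice $L := H^2(\Hat X;\Z)$, an even unimodular lattice of signature $(3,19)$, and prescribe: classes $e_1,\dots,e_k$ with $e_i\cdot e_j = -2\delta_{ij}$ (to be the $\cE_i = [E_i]$); a class $\Hat h \in L$ with $\Hat h^2 = B$ and $\Hat h\cdot e_i = \pm 1$ for every $i$ (to be $e(\Hat\cL)$); and a primitive class $f \in L$ with $f^2 = A$ that is orthogonal to $\Hat h$ and to every $e_i$ (to be $2\operatorname{Re}[\Hat\mu]$). The Gram matrix of $(\Hat h, e_1,\dots,e_k)$ has determinant $(-2)^k\big(B+\tfrac12 k\big)$, so the hypothesis $B > -\tfrac12 k$ makes $\Lambda := \langle \Hat h, e_1,\dots,e_k\rangle$ nondegenerate of signature $(1,k)$, and $\kappa := -\Hat h - \tfrac12\sum_i(\Hat h\cdot e_i)\,e_i \in \Lambda_\R$ then has $\kappa^2 = B + \tfrac12 k > 0$. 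The decisive step is to realize this data geometrically: I would invoke Kobayashi and Todorov's Torelli-type classification of K\"ahler generalized K3 surfaces, choosing the period plane inside $\Lambda^\perp_\R$ with $f = 2\operatorname{Re}[\Hat\mu]$, so that $\operatorname{Pic}(\Hat X) \supseteq \Lambda$, each $e_i$ is represented by a $(-2)$-curve $E_i$, and $\kappa$ lies on the wall of the K\"ahler cone orthogonal to all the $e_i$, hence descends to an ample class on $X$. Carrying out this realization, while also keeping $f$ and $\langle f, 2\kappa\rangle$ primitively embedded (via Nikulin's theory of primitive embeddings), is the main obstacle, and it is here that the bound $k \le 9$ is used.

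Granting such an $\Hat X$, I would descend. A nonvanishing holomorphic two-form $\Hat\mu$, scaled so that $2\operatorname{Re}[\Hat\mu] = f$, descends by Lemma~\ref{l38} to a nonvanishing holomorphic orbifold form $\mu\in\Omega^{2,0}(X)$ with $q^*[\mu] = [\Hat\mu]$; then $\nu := \mu + \bar\mu$ is real, lies in $\Omega^{2,0}(X)\oplus\Omega^{0,2}(X)$, and is symplectic because $\mu$ is nonvanishing. Taking the holomorphic $\C^\times$ bundle $\Hat\cL$ over $\Hat X$ with $e(\Hat\cL) = \Hat h$ (which exists since $\Hat h\in\operatorname{Pic}(\Hat X)$) and applying Lemma~\ref{l39}, I obtain a holomorphic $\C^\times$ bundle $\cL$ over $X$ with $q^*e(\cL) = \Hat h + \tfrac12\sum_i(\Hat h\cdot e_i)\,\cE_i = -\kappa$, so $e(\cL) = -\kappa$ by the injectivity in Lemma~\ref{lcoho}; moreover $\cL$ is a manifold by the last clause of Lemma~\ref{l39}, since each $\Hat h\cdot e_i$ is odd. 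Finally I would let $\zeta$ be any K\"ahler orbifold form in the ample class $\kappa$ on $X$, so that $e(\cL) = -[\zeta]$.

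It remains to verify the numerology. Since $q$ is a biholomorphism off the measure-zero set $\cup_i E_i$, Lemma~\ref{current} gives $\int_X\alpha\wedge\beta = q^*[\alpha]\cdot q^*[\beta]$ for the forms in play; moreover $q^*[\nu] = f$ (as $\Hat\mu$ restricts to zero on each curve $E_i$) and $q^*[\zeta] = \kappa$. Because $f\perp\Lambda \ni \kappa$, the cross term $\int_X\nu\wedge\zeta = f\cdot\kappa$ vanishes, while $\int_X\nu^2 = f^2 = A$ and, using $e_i^2 = -2$ and $\Hat h\cdot e_i = \pm 1$,
\[
\int_X \zeta^2 = \kappa^2 = \Hat h^2 + \tfrac12\sum_i(\Hat h\cdot e_i)^2 = B + \tfrac12 k,
\]
which assembles into $\int_X(\nu+t\zeta)^2 = A + (B+\tfrac12 k)t^2$. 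For the primitivity claim, $q^*[\nu + 2\ell\zeta] = f + \ell\,(2\kappa)$, where $2\kappa = -2\Hat h - \sum_i(\Hat h\cdot e_i)e_i \in L$ is primitive in $\Lambda$; in the orthogonal rank-two sublattice spanned by the primitive classes $f$ and $2\kappa$ this vector has coprime coordinates $(1,\ell)$, hence is primitive there, and it is primitive in $L$ because $\langle f, 2\kappa\rangle$ was arranged to be primitively embedded. This yields all the asserted properties.
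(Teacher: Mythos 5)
Your overall strategy is the same as the paper's: prescribe lattice data in the K3 lattice $L$ (roots $e_i$, a class of square $B$ meeting each root with odd pairing, and an orthogonal class of square $A$ for the holomorphic form), realize it by a marked polarized generalized K3 surface via Kobayashi--Todorov, descend the holomorphic form through Lemma~\ref{l38} and the bundle through Lemma~\ref{l39}, and finish with the same intersection-number and coprime-coordinate computations. Those final computations in your proposal are correct. However, the step you yourself flag as ``the main obstacle'' is precisely the mathematical content of the proposition, and leaving it unexecuted is a genuine gap, in two respects. First, the existence of the lattice data with the required primitivity is not automatic from generalities: the paper proves it by an explicit construction (Lemma~\ref{roots}) in the decomposition $L = H \oplus H \oplus H \oplus -E_8 \oplus -E_8$, taking one root from the third hyperbolic summand and four from each $-E_8$ summand (this is exactly where $k \le 9$ enters, not in a Nikulin-style embedding argument); your appeal to ``Nikulin's theory'' is a plausible alternative but is not carried out, and you would still have to verify the primitivity of the span of \emph{all} the classes $f$, $\Hat h$, $e_1,\dots,e_k$ (which the paper needs and proves by exhibiting a dual basis), not just of $\langle f, 2\kappa\rangle$.

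Second, and more seriously, your realization condition $\operatorname{Pic}(\Hat X) \supseteq \Lambda$ is too weak to give a surface with \emph{exactly} $k$ isolated $\Z_2$ singularities. If the period is not chosen generically, $H^{1,1}(\Hat X) \cap L$ can contain $(-2)$-classes orthogonal to $\kappa$ beyond $\pm e_1,\dots,\pm e_k$; then either $\kappa$ fails to be a polarization in the sense required by Morrison/Kobayashi--Todorov, or the contraction acquires additional or worse-than-$A_1$ singular points, and the whole descent (in particular the count of singularities and the identification of $q$ as the blow up at $k$ points of $\Z_2$ type) breaks down. The paper closes this hole in Corollary~\ref{corKT}: it chooses the imaginary part $\gamma$ of the period from a dense set of vectors in $W^\perp$ satisfying $\gamma^\perp \cap L = W \cap L$ (a countable-union-of-hyperplanes argument), which forces $\kappa^\perp \cap \beta^\perp \cap \gamma^\perp \cap L = K$ and hence $R(X) = \{\pm d_1,\dots,\pm d_k\}$; the classification of simple surface singularities then pins down the singularity type. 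Your proposal needs this genericity argument (or an equivalent), and also needs to note that the K\"ahler orbifold form $\zeta$ in the class of the polarization comes from the second half of Theorem~\ref{KT} (Kobayashi--Todorov's Ricci-flat orbifold metrics), rather than from an unproved orbifold ``ample implies K\"ahler'' assertion.
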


Before proving this proposition, we introduce some terminology.
Let $q \colon \Hat X \to X$ be the minimal resolution of a generalized K3 surface $X$.
Let $\delta_1,\cdots,\delta_k \in H^{1,1}(\Hat X) \cap H^2(\Hat X;\mathbb{Z})$ be the Poincar\'e duals of the $(-2)$-curves contracted 
by  $q$.
By definition, the \textbf{root system} of $X$ is
$$R(X):=\Big\{\delta=\sum_{i=1}^k a_i \delta_i 
\, \Big| \,  
a_i \in \mathbb{Z} \ \mbox{and} \  (\delta,\delta)=-2\Big\}
\subset H^{1,1}(\Hat X) \cap H^2(\Hat X;\mathbb{Z}).
$$
Let $I(X) \subset H^2(\Hat X; \Z)$ be the set of all classes orthogonal to $R(X)$. 
Clearly, the image of  the pullback map $q^* \colon
H^2(X;\Z) \to H^2(\Hat X ;\Z)$ is contained in $I(X)$.
Similarly, $H^{2,0}(\Hat X)$ is contained in $I(X) \otimes \C$.

The {\bf K3 lattice} $L$ is the even unimodular lattice with signature $(3; 19)$; let $L_{\R} := L \otimes_{\Z} {\R}.$ 
Consider the cup product pairing $(\cdot, \cdot)$ on $\Hat X$.
Since  $(\Hat X,\Hat I)$ is a K3 surface,
there exists an isometry from $H^2(\Hat X;\mathbb{Z})$ to $L$ \cite[Proposition VIII.3.2]{BPV}, that is, 
an isomorphism of groups that intertwines the symmetric bilinear forms.
A {\bf marking} of $X$ is
a metric injection $\phi \colon I(X)\rightarrow L$ that can be extended to an isometry $\Hat\phi:H^2(\Hat X;\mathbb{Z}) \rightarrow L$. 
A \textbf{marked generalized K3 surface} is a triple $(X,I,\phi)$ 
where $(X,I)$ is a generalized K3 surface and $\phi$ is a marking. 

Let $$\Omega:=\{\alpha \in L_{\mathbb{C}} \mid (\alpha,\alpha)=0 \mbox{ and } (\alpha,\overline{\alpha})>0\}/\C^{\times}$$ be the classical period domain.
Since $H^{2,0}(\Hat{X}) \simeq \C$, there is a well-defined {\bf period point}
associated to any marked generalized K3 surface $(X,I, \phi)$:
$$\tau_1(X,I, \phi) := [\phi(\alpha_{\Hat X})] \in \Omega,$$
where $\alpha_X \in H^{2,0}(\Hat X)$ is any non-zero class.

A \textbf{polarization} on $X$ is an element of a subset $V_P^+(X)$ of $I(X)$ defined by
$$
V_P^+(X):=\left\{\kappa \in \overline{V_P^+(\Hat X)} \ \Bigg| \
\begin{array}{l}
\mbox{Given } \delta\in H^{1,1}(\Hat X)\cap H^2(\Hat X;\mathbb{Z}) \\
\mbox{with } (\delta,\delta)=-2,\\
(\kappa,\delta)=0 \mbox{ if and only if } \delta \in R(X)
\end{array}\right\},
$$
where $\overline{V_P^+(\Hat X)}$ is the closure of
$$
V_P^+(\Hat X):=\left\{\kappa \in H^{1,1}(\Hat X) \cap H^2(\Hat X;\mathbb{R}) \ \Bigg| \
\begin{array}{l} (\kappa,\kappa) >0  \textrm{ and }(\kappa,\delta) > 0 \\ \mbox{for all }  \mbox{effective} \\ (-2)\textrm{-classes }\delta \textrm{ on } \Hat X 
\end{array} \right\}.
$$
A \textbf{marked polarized generalized K3 surface}  $(X,I,\epsilon,\phi)$
is a marked generalized K3 surface $(X,I,\phi)$ and a polarization $\epsilon$ on $X$.

Let
$$K\Omega:=\{(\kappa,[\alpha])\in L_{\mathbb{R}} \times \Omega\mid (\kappa, \kappa) > 0 \mbox{ and } (\kappa, \alpha) = 0 \}$$ 
be the {\bf polarized period domain}.
There is a {\bf polarized period point} associated to each marked polarized generalized K3 surface $(X,I, \epsilon,\phi)$:
$$\tau_2(X,I, \epsilon,\phi) := (\phi(\epsilon), [\phi(\alpha_{\Hat X})]) \in K\Omega,$$
where again $\alpha_{\Hat X} \in H^{2,0}(\Hat X)$ is any non-zero class.
As Morrison \cite{Mo} proved, $\tau_2$ induces a bijection between isomorphism classes of marked polarized generalized K3 surfaces and $K\Omega$. 
Here two marked polarized generalized K3 surfaces 
$(X_1,I_1,\epsilon_1,\phi_1)$ and $(X_2,I_2,\epsilon_2,\phi_2)$ are isomorphic
if $\phi_1(\epsilon_1)=\phi_2(\epsilon_2)$ and there is a biholomorphism $f \colon X_1 \to X_2$ such that
$\phi_2 \circ q_2^*=\phi_1 \circ q_1^* \circ f^*$.
We need the following theorem, due to Kobayahsi and Todorov, which is a refinement of
the fact that $\tau_2$ is surjective.

\begin{refTheorem}\labell{KT}
Given $(\kappa, [\alpha]) \in K\Omega$, there exists a marked
polarized generalized K3 surface $(X,I, \epsilon,\phi)$ such that
$\tau_2(X,I, \epsilon,\phi) = (\kappa,[\alpha])$.
Moreover,  there exists a  K\"ahler orbifold form $\zeta \in \Omega^{1,1}(X)$
that  defines a closed current on $\Hat X$ in the class 
$\epsilon$, where $q \colon \Hat X \to X$ is the minimal resolution. 
\end{refTheorem}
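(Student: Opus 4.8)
The first assertion is nothing more than the surjectivity of $\tau_2$: by Morrison \cite{Mo}, $\tau_2$ is a bijection onto $K\Omega$, so the given pair $(\kappa,[\alpha])$ is realized by some marked polarized generalized K3 surface $(X,I,\epsilon,\phi)$. Hence the whole content of the statement is the construction of the Kähler orbifold form $\zeta$. Write $q\colon \Hat X\to X$ for the minimal resolution. By Lemma~\ref{lcoho}(1) the map $q^*\colon H^2(X;\Z)\to I(X)$ is an isomorphism, and since $\epsilon\in V_P^+(X)\subset I(X)$ there is a unique class $\ol\epsilon\in H^2(X;\R)$ with $q^*\ol\epsilon=\epsilon$; I will produce $\zeta$ lying in $\ol\epsilon$.

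The plan is to represent $\ol\epsilon$ by a genuine Kähler orbifold form. By the definition of $V_P^+(X)$, the class $\epsilon$ lies in the closure of the Kähler cone of $\Hat X$ and has $(\epsilon,\epsilon)=(\kappa,\kappa)>0$, so it is nef and big; moreover $(\epsilon,\delta)=0$ exactly for $\delta\in R(X)$, so $\epsilon$ is strictly positive on every curve that $q$ does not contract and trivial on the $(-2)$-curves that $q$ collapses. Therefore $\ol\epsilon$ is ample on the complex analytic space $X$, and by the orbifold Kodaira embedding theorem (Baily) a multiple of $\ol\epsilon$ realizes $X$ as a projective variety and yields a closed orbifold $(1,1)$-form in the class $\ol\epsilon$. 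The main difficulty is that this form must be positive (Kähler) even at the $\Z_2$ points, whereas the naive Fubini--Study pullback degenerates there: in a $\Z_2$-chart $\C^2$ the embedding factors through $z\mapsto(z_1^2,z_1 z_2,z_2^2)$, whose differential vanishes at the origin. I would fix this locally: near each singular point $p_i$ the flat form $\tfrac{\sqrt{-1}}{2}\del\delbar |z|^2$ descends to a Kähler orbifold form with a strictly plurisubharmonic, $\Z_2$-invariant potential, and a regularized-maximum gluing of this local potential with a global potential of the Fubini--Study form produces a closed orbifold $(1,1)$-form $\zeta$ that is Kähler everywhere, including at the $p_i$, and still represents $\ol\epsilon$. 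Alternatively one may simply quote the Einstein--Kähler orbifold metric of Kobayashi and Todorov, whose Kähler form lies in $\ol\epsilon$.

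It remains to identify the cohomology class of the current determined by $\zeta$. Since $\zeta$ is a bounded smooth orbifold form and $q\circ\hat\iota$ is smooth with $\cup_i E_i$ of measure zero, the functional $\psi\mapsto \int_{\Hat X\ssetminus\cup_i E_i}\hat\iota^*\psi\wedge (q\circ\hat\iota)^*\zeta$ is well defined and continuous, so $\zeta$ defines a current $T$ on $\Hat X$; closedness of $\zeta$ makes $T$ closed, the boundedness of $\zeta$ ruling out any boundary contribution along the measure-zero set $\cup_i E_i$. Because $q$ collapses each $E_i$ to the point $p_i$, a bounded-mass argument shows that $[T]$ restricts to $0$ on every $E_i$, so $[T]\in I(X)_\R$. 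Lemma~\ref{current} then gives $q^*[\zeta]=[T]$, and comparing with $q^*[\zeta]=q^*\ol\epsilon=\epsilon$ together with the injectivity of $q^*$ forces $[T]=\epsilon$. Thus $\zeta$ is a Kähler orbifold form defining a closed current on $\Hat X$ in the class $\epsilon$, as required. As indicated, the one substantial point is the positivity of $\zeta$ at the singular points; everything else is either Morrison's theorem or a direct application of the current machinery of Lemma~\ref{current}.
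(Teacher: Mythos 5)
Your handling of the first claim (Morrison's surjectivity of $\tau_2$) matches the paper, but your main construction of $\zeta$ has a fatal gap: it implicitly assumes the polarization is a \emph{rational} class. By definition $K\Omega \subset L_\R \times \Omega$, so $\kappa$ is an arbitrary real class with $(\kappa,\kappa)>0$; the Remark following the theorem even notes that Kobayashi--Todorov normalize $(\kappa,\kappa)=1$, which no element of an even lattice can satisfy, so the intended polarizations are essentially never integral. Consequently ``ampleness'' of $\ol{\epsilon}$ is meaningless, no multiple of $\ol{\epsilon}$ is integral, and Baily's orbifold Kodaira embedding theorem cannot be invoked. Worse, for generic $[\alpha]$ (exactly the situation exploited later in Corollary~\ref{corKT}, where $\gamma$ is chosen so that $\gamma^\perp \cap L$ is as small as possible), the Picard lattice of $\Hat X$ is negative definite, so $\Hat X$ and $X$ admit no projective embedding at all; the surface whose existence the theorem asserts is generically non-algebraic, and a Fubini--Study form with a regularized-maximum correction at the singular points simply does not exist. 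This is precisely why the paper argues differently: it quotes Theorem 3 of Kobayashi--Todorov, which produces a K\"ahler form on $X$ in the class $\epsilon$ in the sense of Fujiki--Moishezon for an \emph{arbitrary} (real) polarization --- the orbifold analogue of the theorem that every K3 surface is K\"ahler --- and then their Theorem 1, which upgrades this to a Ricci-flat K\"ahler--Einstein orbifold form that moreover defines a closed current on $\Hat X$ in the class $\epsilon$. Your closing sentence (``alternatively, quote the Einstein--K\"ahler orbifold metric of Kobayashi and Todorov'') is in fact the paper's entire proof, but as you state it it is incomplete: their Theorem 1 requires the K\"ahler-ness of the class (their Theorem 3) as input, and the statement about the closed current on $\Hat X$ is part of their conclusion rather than something your argument recovers.

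Two secondary problems. First, Lemmas~\ref{lcoho} and~\ref{current} are proved in the paper only for isolated $\Z_2$ singularities and their blow ups, whereas Theorem~\ref{KT} concerns arbitrary marked polarized generalized K3 surfaces, whose singular points can be any simple (A-D-E) singularities; your local flat-potential fix in a $\Z_2$ chart likewise does not cover the general case. Second, your boundedness claims are false as stated: the pullback $(q \circ \hat\iota)^*\zeta$ of a smooth orbifold form is \emph{not} bounded near an exceptional divisor --- in blow-up coordinates it blows up like the reciprocal of the distance to $E_i$ and is merely locally integrable --- so both the continuity of the current and its closedness (vanishing of boundary contributions in a cutoff Stokes argument) require genuine estimates, not a ``measure zero plus boundedness'' remark.
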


\begin{proof}
The first claim is a theorem of Morrison \cite{Mo}; see also  \cite[Theorem C, p.\ 347]{KT}.
By Theorem 3 of \cite[p.\ 353]{KT}, $\epsilon$ contains a K\"ahler form on $X$ in the sense of Fujiki-Moishezon.
Thus, Theorem 1 of \cite[p.\ 348]{KT} implies
that there exists a Ricci-flat K\"ahler-Einstein orbifold form on $X$ which defines a closed current 
on $\Hat X$ in the cohomology class  $\epsilon$.
(Alternatively,
see \cite[Theorem 13, p.\ 284]{Kob} and the definition of ``K\"ahler polarization" \cite[p.\ 279]{Kob}.)
\end{proof}

\begin{Remark}
Some authors, e.g. \cite{Loo}, only consider one component of the polarized period domain and restrict marked polarized K3 surfaces
accordingly; Kobayashi and Todorov follow this tradition.
In contrast, following \cite{BPV}, we consider both components. 
This change is inconsequential,
because the isomorphism of $L$ that sends
$x$  to $-x$ induces a diffeomorphism of $K\Omega$ that exchanges the two components.
Similarly, 
instead of restricting to $\kappa \in L_\R$ such that $(\kappa,\kappa) = 1$,
we allow all $\kappa$ such that $(\kappa,\kappa) > 0$.
\end{Remark}

We will use the following corollary of Theorem~\ref{KT}.

\begin{Corollary}\labell{corKT}
Fix a negative-definite primitive sublattice $K \subset L$ and
$\kappa, \beta \in K^\perp \subset L_\R$ such that 
$(\kappa, \kappa) >0$, 
$(\beta, \beta)  > 0$,
and $(\kappa,\beta) =0$.
There exists a marked polarized  generalized K3 surface $(X,I,\epsilon, \phi)$  
such that $\kappa = \phi(\epsilon)$,  $\beta \in \phi( H^{2,0}(\Hat X) \oplus H^{0,2}(\Hat X))$,
and $\Hat\phi(R(X)) = \{d \in K \mid (d,d) = -2 \}$.
Moreover, there exists a  K\"ahler orbifold form $\zeta \in \Omega^{1,1}(X)$ that
defines a closed current on $\Hat X$ in the class $\epsilon$.
Here, $q \colon \Hat X \to X$ is the minimal resolution of $X$ and $\Hat \phi \colon H^2(\Hat X; \Z) \to L$
is any isometry extending $\phi$.
\end{Corollary}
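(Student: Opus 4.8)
The plan is to deduce Corollary~\ref{corKT} from Theorem~\ref{KT} by constructing an appropriate point of the polarized period domain $K\Omega$ and then transporting the resulting marked polarized generalized K3 surface back along a chosen isometry. First I would produce a suitable period point. Using the orthogonal complement $K^\perp \subset L_\R$, I have $\kappa, \beta \in K^\perp$ with $(\kappa,\kappa)>0$, $(\beta,\beta)>0$, and $(\kappa,\beta)=0$. Since $\beta$ alone gives only a real class, I need a complex class $\alpha$ spanning a period point. The natural move is to pick a third vector $\gamma \in K^\perp \cap L_\R$ orthogonal to both $\kappa$ and $\beta$ with $(\gamma,\gamma) = (\beta,\beta)$ and set $\alpha := \beta + \sqrt{-1}\,\gamma$; then $(\alpha,\alpha) = (\beta,\beta) - (\gamma,\gamma) = 0$ and $(\alpha,\overline\alpha) = (\beta,\beta) + (\gamma,\gamma) > 0$, so $[\alpha] \in \Omega$, while $(\kappa,\alpha) = 0$ because $\kappa \perp \beta$ and $\kappa \perp \gamma$. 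Here I must use that $K$ is negative-definite while $L$ has signature $(3;19)$, so $K^\perp$ has signature $(3; 19 - \operatorname{rank} K)$ and in particular contains enough positive directions to find such a $\gamma$; this is where the hypothesis that $K$ is negative-definite does its work. Thus $(\kappa,[\alpha]) \in K\Omega$.

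Next I would invoke Theorem~\ref{KT} to obtain a marked polarized generalized K3 surface $(X,I,\epsilon,\phi)$ with $\tau_2(X,I,\epsilon,\phi) = (\kappa,[\alpha])$, together with the K\"ahler orbifold form $\zeta \in \Omega^{1,1}(X)$ defining a closed current on $\Hat X$ in the class $\epsilon$. By definition of $\tau_2$ this gives $\phi(\epsilon) = \kappa$ and $[\phi(\alpha_{\Hat X})] = [\alpha]$, so $\phi(\alpha_{\Hat X})$ is a nonzero complex multiple of $\alpha = \beta + \sqrt{-1}\,\gamma$. Writing $\alpha_{\Hat X}$ for a generator of $H^{2,0}(\Hat X) \simeq \C$, the real and imaginary parts of $\phi(\alpha_{\Hat X})$ span $\phi\big(H^{2,0}(\Hat X) \oplus H^{0,2}(\Hat X)\big)$ as a real plane, and since that plane equals $\R\beta \oplus \R\gamma$, the class $\beta$ lies in $\phi\big(H^{2,0}(\Hat X) \oplus H^{0,2}(\Hat X)\big)$, as required. (One should note $\Hat\phi$ restricted to $I(X)$ agrees with $\phi$, so this identification makes sense after extending to the isometry $\Hat\phi$.)

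The remaining assertion is the identification of the root system, $\Hat\phi(R(X)) = \{d \in K \mid (d,d) = -2\}$. By definition $R(X)$ consists of integral $(-2)$-classes $\delta \in H^{1,1}(\Hat X)$ that are integer combinations of the $(-2)$-curves contracted by $q$, and $I(X)$ is the set of classes orthogonal to $R(X)$. The polarization condition built into $V_P^+(X)$ says precisely that for an integral $(-2)$-class $\delta$, one has $(\epsilon,\delta)=0$ if and only if $\delta \in R(X)$. Transporting via the isometry $\Hat\phi$ and using $\phi(\epsilon)=\kappa$, the classes in $\Hat\phi(R(X))$ are exactly the integral $(-2)$-vectors orthogonal to $\kappa$. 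I would then match these against $K$: since $\kappa \in K^\perp$ and $K$ is precisely the sublattice I arranged to be orthogonal to the period plane, the integral $(-2)$-vectors orthogonal to $\kappa$ that lie in the $(1,1)$-part are forced into $K$, and conversely every $(-2)$-vector of $K$ is orthogonal to $\kappa$; the Hodge-theoretic input is that a class is of type $(1,1)$ exactly when it is orthogonal to $H^{2,0}(\Hat X)$, i.e. orthogonal to $\alpha$, which by construction holds for $K$. I expect this last step to be the main obstacle: it requires carefully arranging, back when choosing $\gamma$ and hence $K$, that $K$ is exactly the set of integral vectors of $L$ orthogonal to the period plane $\R\beta \oplus \R\gamma$ and to $\kappa$, rather than merely contained in it, so that the ``only if'' direction of the root-system identification is forced. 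Concretely this means choosing $\gamma$ generically within the available positive cone of $K^\perp \cap \kappa^\perp$ so that no integral vector outside $K$ becomes orthogonal to $\alpha$, which is possible because the set of such ``bad'' $\gamma$ is a countable union of proper subspaces.
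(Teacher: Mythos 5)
Your proposal is correct and follows essentially the same route as the paper's proof: choose $\gamma$ of positive square in the orthogonal complement of the span $W$ of $\kappa$, $\beta$, and $K$, generically so that $\gamma^\perp \cap L = W \cap L$ (avoiding a countable union of proper subspaces), apply Theorem~\ref{KT} to the period point $(\kappa,[\beta+\sqrt{-1}\gamma])$, and identify $R(X)$ via the polarization condition together with the fact that integral $(1,1)$-classes are exactly the lattice vectors orthogonal to $\beta$ and $\gamma$. The genericity step you flag as the ``main obstacle'' is precisely the paper's argument, so no gap remains.
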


\begin{proof}
Let $W \subset L_\R$ be the subspace generated by  $\kappa$,  $\beta$, and $K$.
For each $\ell \in L \ssetminus W$, the projective space  $\P(W^\perp \cap \ell^\perp)$ is a codimension one submanifold of $\P (W^\perp)$.
Since $L \ssetminus W$ is countable, the complement of the union $\cup_{\ell \in L \ssetminus W} \P (W^\perp \cap \ell^\perp)$ is dense in $\P (W^\perp)$.
Hence, the set of $\gamma \in W^\perp \subset L_\R$ such that $\gamma^\perp \cap L = W \cap L$ is dense in $L_\R$.
Additionally, since $K$ is a negative-definite sublattice, the pairing $(\cdot,\cdot)$ is indefinite on $W^\perp$.
Therefore, there exists $\gamma \in W^\perp 
\subset L_\R$ with $( \gamma, \gamma )=(\beta,\beta) > 0$ such that
$\gamma^\perp \cap L = W \cap L$, and hence $\kappa^\perp \cap \beta^\perp \cap \gamma^\perp \cap L = K$.

Since $( \kappa,[\beta+\sqrt{-1}\gamma])$ lies in $K\Omega$, Theorem \ref{KT} implies that  there exists a marked polarized generalized 
K3 surface $(X, I, \epsilon, \phi)$ so that  $\phi(\epsilon) =  \kappa$
and $\beta + \sqrt{-1} \gamma \in \phi( H^{2,0}(\Hat X)).$
Moreover, there exists a K\"ahler orbifold form $\zeta \in \Omega^{1,1}(X)$ which defines a closed current on $\Hat X$ in the cohomology class  $\epsilon$. Since $\epsilon \in V_P^+(X)$,
$$
R(X) =\left\{\delta \in H^{1,1}(\Hat X) \cap H^2(\Hat X;\mathbb{Z}) \mid
(\delta,\delta) = -2  \textrm{ and }(\epsilon,\delta)  = 0
 \right\}.
$$
Moreover, since $\beta + \sqrt{-1} \gamma \in \phi(H^{2,0}(\Hat X))$, we have $\Hat\phi(H^{1,1}(\Hat X) ) = \beta^\perp \cap  \gamma^\perp \cap L.$
Since $\kappa=\phi(\epsilon)$ and $\kappa^\perp  \cap \beta^\perp \cap \gamma^\perp \cap L = K$, this implies that
$$ \Hat \phi(R(X)) = \{ d \in K \mid (d, d) = -2 \}.$$
\end{proof}

We are now ready to prove our main proposition.

\begin{proof}[Proof of Proposition~\ref{propKT}]
By Lemma~\ref{roots} below,  
there exist  $\Hat \kappa$, $\beta$, and $d_1,\dots,d_k$ in  $L$ 
such that  
$(\Hat \kappa, \Hat \kappa)=B$, 
$(\beta,\beta)=A$, 
$(\Hat \kappa,\beta) = 0$,
$(\Hat \kappa, d_i) = 1$ 
and 
$(\beta, d_i) = 0$ 
for all $i$, 
and
$(d_i,d_j)=-2 \delta_{ij}$ for all $i$ and $j$;  
moreover, 
$\Hat \kappa$, 
$\beta$, 
and $d_1,\dots,d_k$ 
generate a primitive sublattice.
Let $K$ be the negative definite primitive sublattice generated by $d_1,\dots,d_k$, and
let $\kappa := \Hat\kappa + \frac{1}{2}\sum_{i=1}^k d_i$ be the projection of $\Hat\kappa$ onto $K^\perp \subset L_\R$. Since $(\kappa,\kappa) = B + \frac{1}{2}k > 0$,
Corollary~\ref{corKT} implies that there  exists a marked polarized generalized K3 surface $(X,I,\epsilon, \phi)$ such that $\kappa = \phi(\epsilon)$, $\beta \in \phi(H^{2,0}(\Hat X) \oplus H^{0,2}(\Hat X))$, and $$\Hat \phi(R(X)) = \{d \in K \mid (d,d) = -2 \} = \{\pm d_1,\dots, \pm d_k\}.$$
Moreover, there exists a K\"ahler orbifold form $\zeta \in \Omega^{1,1}(X)$ that defines a closed current on $\Hat X$
in the class $\epsilon$.
Here, $q \colon \Hat X \to X$ is the minimal resolution of $X$ and $\Hat \phi \colon H^2(\Hat X; \Z) \to L$
is any isometry extending $\phi$.

The root system of $X$ is $R(X)=\{ \pm \cE_1,\dots, \pm \cE_k\}$, where $\Hat \phi(\cE_i)=d_i$ for all $i$ and $(\cE_i,\cE_j)=-2\delta_{ij}$ for all $i,j$. By the classification of simple surface singularities (and an analysis of their minimal resolutions), this implies that $X$ has $k$ isolated $\Z_2$ singularities, 
that $q \colon \Hat X \to X$ is the blow up of $X$ at the singular points, and that $\cE_1,\dots,\cE_k \in H^{1,1}(\Hat X) \cap H^2(\Hat X; \Z)$ are the Poincar\'e duals to the exceptional divisors. 

Since $\beta \in  \phi(H^{2,0}(\Hat X) \oplus H^{0,2}(\Hat X)) \cap L$,
there exists a holomorphic form $\Hat\mu \in \Omega^{2,0}(\Hat X)$
such that
$\textrm{Re} (\phi([\Hat\mu]))= \beta$,
where $\textrm{Re}(\cdot)$ denotes the real part.
Since $(\beta,\beta) > 0$ and $\Hat X$ is a  K3 surface,
this implies that $\Hat \mu$  is nonvanishing.
By Lemma \ref{l38},
there exists a nonvanishing holomorphic orbifold form $\mu \in \Omega^{2,0}(X)$ such that $q^*([\mu])=[\Hat\mu]$. 
Thus,
$\nu := \textrm{Re}(\mu) \in  \Omega^{2,0}(X)\oplus \Omega^{0,2}(X)$ is 
a real symplectic orbifold form such that $\phi(q^*([\nu])) =\beta$. Since also $q^*(\zeta)=\epsilon$ by Lemma \ref{current}, the orbifold forms $[\zeta]$ and $\nu$ have the desired properties.

Define a class $\Hat \epsilon := \Hat \phi^{-1} (\Hat \kappa)=\epsilon - \frac{1}{2} \sum_{i=1}^k \cE_i \in H^{1,1}(X) \cap H^2(\Hat X;\mathbb{Z})$.
Since $\Hat X$ is a closed K\"ahler manifold, the Lefschetz theorem on $(1,1)$-classes
imples that there exists a holomorphic $\C^\times$ bundle over $\Hat X$
with Euler class $- \Hat \epsilon$.
Therefore, since $q^*([\zeta]) = \epsilon=\Hat \epsilon+\frac{1}{2}\sum_{i=1}^k \cE_i$, and since $( \Hat \epsilon, \cE_i ) = 1$ for all $i$, Lemma \ref{l39} implies that 
there exists a holomorphic $\C^\times$ bundle $\cL$ over $X$  with Euler class $-[\zeta]$ so that $\cL$ is a manifold. \end{proof}

We conclude this section with an easy technical lemma.

\begin{Lemma}\labell{roots}
Fix  $k \in \{0,1,\dots ,9\}$ and even integers $B$  and $A$.
There exist  $\Hat \kappa$, $\beta$, and $d_1,\dots,d_k$ in  $L$ 
such that  
$(\Hat \kappa, \Hat \kappa)=B$, 
$(\beta,\beta)=A$, 
$(\Hat \kappa,\beta) = 0$,
$(\Hat \kappa, d_i) = 1$ 
and 
$(\beta, d_i) = 0$ 
for all $i$, 
and
$(d_i,d_j)=-2 \delta_{ij}$ for all $i$ and $j$;  
moreover, 
$\Hat \kappa$, 
$\beta$, 
and $d_1,\dots,d_k$ 
generate a primitive sublattice.
\end{Lemma}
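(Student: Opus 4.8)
The plan is to work inside an explicit model of the K3 lattice. Recall that, up to isometry, $L \cong U^{\oplus 3} \oplus E_8(-1)^{\oplus 2}$, where $U$ is the hyperbolic plane with basis $e,f$ satisfying $(e,e)=(f,f)=0$, $(e,f)=1$, and $E_8(-1)$ is the negative definite $E_8$-lattice (whose roots have square $-2$). I would build $\Hat\kappa$, $\beta$, and $d_1,\dots,d_k$ componentwise in these summands, using the two copies of $E_8(-1)$, together with one hyperbolic plane when $k=9$, to carry the mutually orthogonal $(-2)$-classes $d_i$, and reserving two of the hyperbolic planes to realize the prescribed self-intersections of $\Hat\kappa$ and $\beta$.

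First I would produce the $d_i$ as pairwise orthogonal roots generating a \emph{primitive} sublattice. The subtlety is that one cannot take an arbitrary orthogonal set of roots inside a single $E_8(-1)$: a set of orthogonal roots occupying all eight coordinates spans a lattice containing a half-integral glue vector that already lies in $E_8(-1)$, so it fails to be primitive. Working in coordinates on $E_8 = D_8 \cup (s + D_8)$, however, one checks that the four orthogonal roots $e_1\pm e_2,\ e_3-e_4,\ e_5-e_6$ span a primitive $A_1^4$ (their span omits two coordinates, which rules out half-integral glue, and an even-sum count rules out integral glue); and a discriminant-group-length count shows four is exactly the maximum, since a primitive $A_1^m$ in $E_8$ forces $m \le 8-m$. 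Taking such a primitive $A_1^4$ in each copy of $E_8(-1)$, together with $e-f\in U$ in one hyperbolic plane when $k=9$, yields up to nine pairwise orthogonal $(-2)$-classes; since these pieces live in orthogonal direct summands and each is primitive in its own summand, their direct sum $\langle d_1,\dots,d_k\rangle$ is primitive in $L$.

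Next I would choose $\Hat\kappa$ and $\beta$. For each copy of $E_8(-1)$ there is a vector pairing to $1$ with each selected root, either exhibited explicitly (e.g.\ $-(e_1+e_3+e_5+e_7)$ in the coordinates above) or obtained from the standard fact that restriction to a primitive sublattice of a unimodular lattice surjects onto its dual. Summing these, adding $f\in U$ in the hyperbolic plane carrying $d_9$ when $k=9$, and adding a correction term $e + c f$ in a reserved hyperbolic plane $U'$, I obtain a class $\Hat\kappa$ with $(\Hat\kappa,d_i)=1$ for all $i$; since the $U'$-part contributes $2c$ to the self-intersection while the rest contributes a fixed even number, I can solve for $c\in\Z$ to arrange $(\Hat\kappa,\Hat\kappa)=B$, using that $B$ is even. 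In the last free hyperbolic plane I set $\beta := e + \tfrac{A}{2} f$, which is orthogonal to $\Hat\kappa$ and to every $d_i$ and satisfies $(\beta,\beta)=A$, using that $A$ is even.

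Finally, primitivity of $\langle \Hat\kappa,\beta,d_1,\dots,d_k\rangle$ would follow from a coefficient-one witness argument: in any rational combination $a\Hat\kappa + b\beta + \sum_i c_i d_i$ lying in $L$, projecting onto the hyperbolic plane carrying $\beta$ forces $b\in\Z$ (the coefficient of $e$ there is exactly $b$), and projecting onto $U'$ forces $a\in\Z$; subtracting $a\Hat\kappa + b\beta\in L$ then leaves $\sum_i c_i d_i \in \langle d_1,\dots,d_k\rangle$ by the primitivity already established, so each $c_i\in\Z$. The main obstacle is precisely this bookkeeping tension: I must fit up to nine pairwise orthogonal $(-2)$-classes whose span is primitive — which the gluing in $E_8$ caps at four per summand — while still keeping two hyperbolic planes free to realize the arbitrary even norms $A$ and $B$ and to witness primitivity. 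Verifying that four orthogonal roots per $E_8(-1)$ can be chosen primitively, so that $4+4+1$ suffices and two planes remain untouched, is exactly what makes the budget close.
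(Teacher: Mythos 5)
Your construction is correct, and it shares the paper's overall skeleton: both proofs work in the explicit decomposition $L = H \oplus H \oplus H \oplus -E_8 \oplus -E_8$, and both allocate four mutually orthogonal $(-2)$-classes to each copy of $-E_8$ plus one to a hyperbolic plane, with the remaining planes used to realize the even norms $A$ and $B$. The two arguments diverge, however, at exactly the two delicate points. First, the pairing condition $(\Hat \kappa, d_i)=1$: you keep the roots inside their summands and load the dual data into $\Hat \kappa$ (explicit dual vectors, or surjectivity of restriction onto the dual of a primitive sublattice of the unimodular $E_8$), plus a correction term $e'+cf'$ in a reserved plane; the paper instead takes $\Hat \kappa = x_{1,1}+\frac{1}{2}Bx_{1,2}$ and \emph{shifts every root by the isotropic vector} $x_{1,2}$, setting $d_i := y_{1,2i-3}+x_{1,2}$, etc., which produces $(\Hat \kappa, d_i)=(x_{1,1},x_{1,2})=1$ without disturbing any other product. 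Second, primitivity: you need (and correctly prove) that your $A_1^4$ in $D_8$-coordinates is primitive in $E_8$, and then run a projection-onto-summands argument; your maximality remark $m\le 8-m$ is not needed but rightly explains why four per summand is the cap. The paper instead exhibits an explicit dual basis inside $L$ (the vectors $x_{1,2}$, $x_{2,2}$, $x_{3,2}$, and combinations such as $y_{1,2}-y_{1,4}+y_{1,8}$), so that pairing against these forces integrality of all coefficients in one stroke; no primitivity statement about root configurations in $E_8$ is ever invoked -- indeed the paper's roots are pairwise non-adjacent \emph{simple} roots, whose span is automatically a direct summand because the simple roots form a $\Z$-basis. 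Both routes are complete: the paper's isotropic-shift trick buys a shorter verification that decouples the pairing condition from the geometry of $E_8$, while your version keeps the $d_i$ as honest roots of the summands and makes the $4+4+1$ budget tension, which the paper leaves implicit, fully transparent.
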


\begin{proof} 
We will give an explicit construction for the case $k=9$; the other cases follow immediately.

Let $H$ denote the unimodular hyperbolic plane, i.e.,
the even unimodular lattice of signature $(1,1)$; let $E_8$ denote 
the  positive definite even unimodular lattice of rank $8$.
Consider the following orthogonal decomposition of the K3 lattice:
$$L = H \oplus H \oplus H  \oplus -E_8 \oplus -E_8.$$
For each $i \in \{1,2,3\}$, choose a basis
$x_{i,1}$ and $x_{i,2}$ for the $i$'th summand of $H$ in the decomposition above such that
$$(x_{i,1},x_{i,1}) = (x_{i,2},x_{i,2}) = 0 \quad \mbox{and} \quad (x_{i,1},x_{i,2}) = 1.$$
Similarly, for each $i \in \{1,2\}$, choose a basis
$y_{i,1},y_{i,2},\dots, y_{i,8}$ for the $i$'th summand of $-E_8$ such that
$$(y_{i,j},y_{i,k}) = 
\begin{cases}
1 & \mbox{ if } \{j,k\} \in \{ \{1,2\}, \{2,3\}, \dots, \{6,7\}\} \cup \{\{5,8\}\}, \\
-2 & \mbox{ if } j = k, \mbox{ and} \\
0 & \mbox{ otherwise}.
\end{cases}
$$

Let $\Hat \kappa :=  x_{1,1} + \frac{1}{2} B x_{1,2}$, 
$\beta := x_{2,1} + \frac{1}{2}A x_{2,2}$, and 
$$d_i :=
\begin{cases}
x_{3,1}-x_{3,2}+x_{1,2} & \mbox{ if } i=1, \\
y_{1,2i-3}+x_{1,2} & \mbox{ if } i \in \{2,3,4,5\}, \\
y_{2,2i-11}+x_{1,2} & \mbox{ if } i \in \{6,7,8,9\}.
\end{cases}
$$
To see that $\Hat \kappa$, $\beta$, and $d_1,\dots,d_k$ generate a primitive sublattice,
consider the dual basis for the subspace that they span:
$\Hat \kappa' :=   x_{1,2}$, $\beta' := x_{2,2}$,
$d_1' := x_{3,2}$
$d_2' := y_{1,2} - y_{1,4} + y_{1,8}$, 
$d_3' :=  y_{1,4} - y_{1,8}$, 
$d_4' :=  y_{1,8}$, 
$d_5' := y_{1,6} - y_{1,8}$, 
$d_6' := y_{2,2} - y_{2,4} + y_{2,8}$, 
$d_7' :=  y_{2,4} - y_{2,8}$, 
$d_8' :=  y_{2,8}$, and
$d_9' := y_{2,6} - y_{2,8}$.
\end{proof}

\section{Locally free Hamiltonian actions }
\labell{lfree}

In this section, we consider locally free Hamiltonian circle actions on symplectic manifolds
with reduced spaces diffeomorphic to generalized K3 surfaces with
isolated $\Z_2$ singularities.
We do not give a general classification, but simply construct the needed examples.
These correspond to the portions of the manifolds 
described in the introduction that are locally free and have convex Duistermaat-Heckman functions, e.g., the part of the manifold in
Theorem \ref{theorem10} that lies over $(1,9) \subset \R/10\Z$.
Moreover, we endow each example with two congenial complex structures that
we use to ``add fixed points" in the next section.

\begin{Proposition}\labell{plf2}
Fix $k \in \{0, 1, \dots, 9\}$, a positive even integer $A$,
and an even integer $B > -\frac{1}{2}k$.
There exists 
a manifold $\breve M$ with a locally free $\C^\times$ action, an $S^1 \subset \C^\times$
invariant symplectic form $\breve \omega \in \Omega^2(\breve M)$, 
with a proper moment map $\breve\Psi \colon \breve M \to \R$,
a $\C^\times$ invariant map $\breve \pi$ from $\breve M$ to
a generalized K3 surface $(X,I)$ with $k$ isolated $\Z_2$ singularities,
and two complex
structures $\breve J_+$ and $\breve J_-$ on $\breve M$
that commute with the $\C^\times$ action\footnote{More precisely, we insist that the map  $\C^\times \times \breve M \to \breve M$ induced by the action is holomorphic with respect to each complex structure.}, so that the following hold:
\begin{enumerate}
\item For all $t \in \R$, the map $\breve \pi$ induces a symplectomorphism from $\breve M \modt$ to $X$ 
with  symplectic form $\breve \sigma_t \in \Omega^2(X)$;  moreover,
$(\breve \sigma_t,\breve \sigma_t)= A + (B + \frac{1}{2}k)  t^2$ and
$q^*([\breve \sigma_{2 \ell}])$ is primitive
for all $\ell \in \Z$.
Here,  $q \colon \Hat X \to X$ is the minimal resolution of $X$.
\item $\breve \pi$ induces a biholomorphism  from $(\breve M,\breve J_+)/\C^\times$ to $(X,I)$, and an
anti-biholomorphism from $(\breve M,\breve J_-)/\C^\times$ to $(X,I)$.
\item $\breve \omega(\breve\xi, \breve J_\pm(\breve\xi)) > 0$, where $\breve \xi \in \chi(\breve M)$ generates the $S^1$ action.
\item $\breve J_\pm$ tames $\breve \omega$ on the preimage $\breve \Psi\inv\big(\pm (0,\infty)\big)$.
\end{enumerate}
\end{Proposition}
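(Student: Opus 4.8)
The plan is to start from the data provided by Proposition~\ref{propKT}, which for the given $k$, $A$, and $B$ yields a generalized K3 surface $(X,I)$ with $k$ isolated $\Z_2$ singularities, a K\"ahler orbifold form $\zeta \in \Omega^{1,1}(X)$, a real symplectic orbifold form $\nu$ with $(\nu + t\zeta, \nu + t\zeta) = A + (B + \frac{1}{2}k) t^2$, primitivity of $q^*([\nu + 2\ell\zeta])$, and a holomorphic $\C^\times$ bundle $\cL$ over $X$ with Euler class $-[\zeta]$ that is itself a \emph{manifold}. The bundle $\cL$ is the natural candidate for $\breve M$: since the $\C^\times$ action on the fibers is free and $\cL$ is a manifold, $\breve M := \cL$ carries a locally free (indeed free on fibers) $\C^\times$ action, and the projection $\breve \pi \colon \cL \to X$ is automatically $\C^\times$ invariant. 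First I would make this identification and define $\breve\pi$.

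Next I would build the symplectic form via the standard minimal coupling / Sternberg construction over an orbifold. Choosing a hermitian metric $h$ on the line bundle containing $\cL$, the associated connection has curvature representing $\zeta$ (up to the normalization in Definition~\ref{d37}), and the function $\breve\Psi([m]) := \tfrac{1}{2}\ln|m|^2$ (or a reparametrization making $\breve\Psi$ surject properly onto $\R$) plays the role of the moment map for the $S^1 \subset \C^\times$ action. The recipe is to set $\breve\omega := \breve\pi^*\nu + d(\breve\Psi\, \theta) $ adjusted by the coupling term so that $\iota_{\breve\xi}\breve\omega = -d\breve\Psi$ and so that each reduced space $\breve M\modt = \breve\Psi^{-1}(t)/S^1$ carries the form $\nu + t\zeta$; this is exactly what makes part~(1) hold, with $\breve\sigma_t = \nu + t\zeta$ pulled back to $X$, the pairing formula inherited directly from Proposition~\ref{propKT}, and primitivity of $q^*([\breve\sigma_{2\ell}])$ the same statement as primitivity of $q^*([\nu + 2\ell\zeta])$. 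Properness of $\breve\Psi$ follows because $|m|^2$ is proper on the line bundle and the zero section has been removed.

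For the two complex structures I would use the total space of a holomorphic $\C^\times$ bundle: the complex manifold $\cL$ itself has a natural complex structure $\breve J_+$ with respect to which the $\C^\times$ action is holomorphic and $\breve\pi$ descends to a biholomorphism $\breve M/\C^\times \simeq (X,I)$, giving the first half of part~(2). For $\breve J_-$ I would take the conjugate structure $-\breve J_+$ (or equivalently the complex structure coming from the conjugate bundle $\overline{\cL}$), so that $\breve\pi$ induces an \emph{anti}-biholomorphism onto $(X,I)$, giving the second half of~(2). Part~(3), positivity of $\breve\omega(\breve\xi, \breve J_\pm \breve\xi)$, reduces to a local calculation in the fiber direction: $\breve\xi$ is the generator of the $S^1$ fiber rotation, $\breve J_\pm \breve\xi$ is (up to sign) the radial vector field, and the coupling term makes $\breve\omega$ positive on this two-plane regardless of the sign of $\breve J$, because the relevant summand is $d\breve\Psi \wedge \theta$ evaluated on the complex fiber line, which is sign-definite. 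This is where the fact that \emph{both} structures work simultaneously should be checked carefully.

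The main obstacle will be part~(4): $\breve J_\pm$ tames $\breve\omega$ only on the half $\breve\Psi^{-1}(\pm(0,\infty))$, not everywhere. The point is that $\breve\omega$ restricts to $\nu + t\zeta$ on the reduced space, and while $\zeta$ is K\"ahler (hence tames $I$) the form $\nu$ is only symplectic, not tamed; so taming of the full $\breve\omega$ holds on the base only once the coefficient $t$ of the K\"ahler part $\zeta$ dominates, i.e.\ for $t$ of the correct sign and bounded away from $0$, matching the structure $\breve J_+$ over positive $t$ and $\breve J_-$ (the conjugate) over negative $t$. Concretely I would write $\breve\omega(v, \breve J_\pm v)$ as a sum of the manifestly positive fiber contribution from part~(3) and the horizontal contribution $\approx (\nu + t\zeta)(\bar v, I \bar v)$ where $\bar v$ is the horizontal projection, and then argue that for $\pm t > 0$ the K\"ahler term $t\,\zeta(\bar v, I\bar v) > 0$ dominates any negative contribution of $\nu$, using compactness of $X$ to get a uniform bound; the change of sign of the taming region is precisely what forces the introduction of the \emph{two} complex structures rather than one.
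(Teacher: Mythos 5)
Your overall skeleton is the paper's: take $\breve M := \cL$, the holomorphic $\C^\times$ bundle supplied by Proposition~\ref{propKT}, equip it with the minimal-coupling form $p^*(\nu) + \frac{\sqrt{-1}}{4\pi}\del\delbar\Psi^2$ with moment map $\Psi(m)=\ln|m|^2$, and let $\breve J_+$ be the natural complex structure on the total space; this is exactly Lemma~\ref{p:one} combined with Proposition~\ref{p:two}, and your verification of properties (1)--(3) for $\breve J_+$ is essentially the paper's. The genuine gap is your definition of $\breve J_-$. You take $\breve J_- := -\breve J_+$ (the conjugate-bundle structure on the total space of $\ol{\cL}$ is the same thing), and you assert that the coupling term is positive on the plane spanned by $\breve\xi$ and $\breve J_\pm(\breve\xi)$ ``regardless of the sign of $\breve J$.'' That is false by bilinearity: $\breve\omega(\breve\xi, -\breve J_+(\breve\xi)) = -\breve\omega(\breve\xi, \breve J_+(\breve\xi)) < 0$, so property (3) can never hold simultaneously for a structure and its negative. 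For the same reason the fiber-direction contribution to taming has the wrong sign everywhere, so (4) fails on all of $\breve\Psi^{-1}\big((-\infty,0)\big)$, not just near $0$; and in addition the action map $\C^\times \times \breve M \to \breve M$ is anti-holomorphic, not holomorphic, in the $\C^\times$ variable with respect to $-\breve J_+$, violating the footnote's requirement (only property (2) actually survives your choice).

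The missing idea, which is the heart of the paper's Proposition~\ref{p:two}, is to conjugate not just the complex structure but also the bundle coordinate: let $\Lambda(m) := m/|m|^2$ be the fiberwise inversion and set $\breve J_-(v) := -\Lambda_*\big(\breve J_+(\Lambda_*^{-1}(v))\big)$. Since $\Lambda$ intertwines the standard action with $\lambda \mapsto \ol{\lambda}^{-1}$, the extra conjugation restores holomorphicity of the $\C^\times$ action; and since $\Lambda$ is $S^1$-equivariant with $\Lambda^*(\Psi) = -\Psi$ and $\Lambda^*(\breve\omega) = 2p^*(\nu) - \breve\omega$, while $p^*(\nu)(v, \breve J_-(v)) = 0$ because $\nu \in \Omega^{2,0}(X)\oplus\Omega^{0,2}(X)$, one gets $\breve\omega(v, \breve J_-(v)) = \breve\omega\big(\Lambda_*^{-1}(v), \breve J_+(\Lambda_*^{-1}(v))\big)$, which transfers (3) and transfers taming from $\Psi^{-1}\big((0,\infty)\big)$ to $\Psi^{-1}\big((-\infty,0)\big)$. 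A second, smaller, flaw: in (4) you ask the K\"ahler term $t\,\zeta(\bar v, I\bar v)$ to ``dominate any negative contribution of $\nu$'' by compactness, but no such domination is possible uniformly as $t \to 0^{\pm}$, where the K\"ahler term is arbitrarily small. The correct observation is that there is nothing to dominate: $\nu$ has type $(2,0)+(0,2)$, so $\nu(w, I(w)) \equiv 0$ and the horizontal contribution equals $t\,\zeta(\bar v, I\bar v)$ exactly; one also needs the $\breve J_+$-invariant splitting of $T\cL$ used in Lemma~\ref{p:one} to dispose of the cross terms, which your sketch passes over.
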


Proposition~\ref{plf2} follows immediately from Proposition~\ref{propKT} and Proposition~\ref{p:two} below.
To begin the proof of the latter proposition, we first confirm that a standard property of holomorphic bundles over closed K\"ahler manifolds also holds for orbifolds. (See Definition~\ref{d37}.)

\begin{Lemma}\labell{deldelbar}
Let $(X,I)$ be a closed complex orbifold that admits a K\"ahler orbifold form.
Let $p \colon \cL \to X$ be a holomorphic $\mathbb{C}^\times$ bundle with Euler class $-[\zeta]$,
where $\zeta \in \Omega^{1,1}(X)$ is a real orbifold form.
Then there exists a hermitian metric on the associated line bundle containing $\cL$ with Chern curvature $2 \pi \sqrt{-1} \, \zeta$.
\end{Lemma}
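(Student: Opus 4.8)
The plan is to begin with an arbitrary hermitian metric and to correct it by a conformal factor, the existence of which will come from the global $\del\delbar$-lemma on $X$. First I would choose any hermitian metric $h_0$ on the line bundle associated to $\cL$; such a metric exists by a partition of unity argument, averaging over the finite isotropy groups on each orbifold chart to guarantee invariance. Let $\zeta_0 \in \Omega^{1,1}(X)$ be the real orbifold form that $h_0$ determines through Definition~\ref{d37}. By that definition $e(\cL) = -[\zeta_0]$; since $e(\cL) = -[\zeta]$ by hypothesis, the classes $[\zeta]$ and $[\zeta_0]$ agree in $H^{1,1}(X)$.

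Next I would record how the associated form changes under a conformal rescaling of the metric. Given a smooth real orbifold function $f$ on $X$, replace $h_0$ by $e^{f} h_0$. For every local holomorphic section $s$ of $\cL$ the quantity $s^*(\Psi) = \ln |s|^2_{h_0}$ is replaced by $\ln|s|^2_{h_0} + f$, so the associated form becomes $\zeta_0 + \frac{\sqrt{-1}}{2 \pi}\del\delbar f$. Hence it suffices to produce a real orbifold function $f$ with
\[
\tfrac{\sqrt{-1}}{2 \pi}\del\delbar f = \zeta - \zeta_0;
\]
the metric $e^{f} h_0$ then has associated form $\zeta$, and therefore Chern curvature $2 \pi \sqrt{-1}\,\zeta$, as required.

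It remains to solve this equation, which is where the only real content lies. The form $\zeta - \zeta_0$ is a real $(1,1)$-form; it is closed, because $\zeta$ and $\zeta_0$ both represent $-e(\cL)$, and it is $d$-exact, because $[\zeta] = [\zeta_0]$. Since $X$ is a compact complex orbifold admitting a K\"ahler orbifold form, Hodge theory is available on $X$ (via Baily's extension of Hodge theory to K\"ahler V-manifolds), and with it the global $\del\delbar$-lemma: every $d$-exact $(1,1)$-form is $\del\delbar$ of a function. Applying this to $\zeta - \zeta_0$ and symmetrizing to take real parts (which is legitimate because $\zeta - \zeta_0$ is real and $\del\delbar = -\delbar\del$) produces the desired real $f$, unique up to an additive constant. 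The main obstacle is precisely the validity of the $\del\delbar$-lemma in the orbifold setting; once one grants orbifold Hodge theory, the rest is the standard conformal-rescaling bookkeeping carried out above.
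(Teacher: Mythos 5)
Your proposal is correct and follows essentially the same route as the paper: start from an arbitrary hermitian metric, observe that the difference of the two curvature forms is a $d$-exact real $(1,1)$-form because both represent $-e(\cL)$, invoke the $\del\delbar$-lemma (or $dd_c$-lemma) for closed complex orbifolds admitting a K\"ahler orbifold form -- the paper cites \cite[Lemma 5.4]{BBFMT} together with Baily \cite{B}, which is the same orbifold Hodge-theoretic input you appeal to -- and then correct the metric by the conformal factor $e^f$. The only differences are cosmetic: you additionally spell out the existence of the initial metric and the reality of $f$, points the paper leaves implicit.
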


\begin{proof}
Fix any hermitian metric $h'$ on the associated line bundle containing $\cL$.
Let $\zeta' \in \Omega^{1,1}(X)$ be the real orbifold form
with local expression
$$\textstyle \frac{\sqrt{-1}}{2 \pi} \del \delbar s^*(\Psi')$$
for every local holomorphic section $s$ of $\cL$,
where $\Psi'(m) := \ln |m|^2_{h'}$ for all $m \in \cL$.
Since $-\zeta$ and $-\zeta'$ both represent the Euler class of $\cL$, the difference $\zeta - \zeta'$ is $d$-exact.
Moreover, since $X$ is a closed complex orbifold that admits a  K\"ahler orbifold form, $X$ satisfies the $\del \delbar$-lemma by \cite[Lemma 5.4]{BBFMT}; (see also \cite{B}).
Hence, $X$  also satisfies the $d d_c$-lemma.
Therefore, there exists a function $f \colon X \to \R$ such that
$$\textstyle \frac{\sqrt{-1}}{2 \pi} \del \delbar f = \zeta - \zeta'.$$
Let $h$ be  the hermitian metric on the associated line bundle containing $\cL$ determined by $h = e^{f} h'$.
Then 
$$\textstyle \zeta =  \frac{\sqrt{-1}}{2 \pi} \del \delbar s^*(\Psi)$$
for every local holomorphic section $s$ of $\cL$, where $\Psi(m) := \ln |m|_h^2 = \Psi'(m) +  f(p(m))$ for all $m \in \cL$.
\end{proof}

Next we show how, in certain cases, we can use forms on a complex orbifold to construct a symplectic form
on a holomorphic $\C^\times$ bundle over that orbifold
so that the natural complex structure on the bundle has the desired properties.

\begin{Lemma}\label{p:one}
Let $(X,I)$ be a closed complex orbifold, $\zeta \in \Omega^{1,1}(X)$ a K\"ahler orbifold form, and $\nu \in \Omega^{2,0}(X) \oplus \Omega^{0,2}(X)$ a real symplectic orbifold form.
Let $p \colon \cL \to X$ be a holomorphic $\C^\times$ bundle over 
$X$ with Euler class $-[\zeta]$; let $J$ be the natural complex structure on $\cL$. 
Then there exists a hermitian metric on the associated line bundle containing $\cL$ 
such that $\omega := p^*(\nu) + \frac{\sqrt{-1}}{4 \pi} \del \delbar \Psi^2 \in \Omega^2(\cL)$ is an $S^1 \subset \C^\times$ invariant  symplectic
form with moment map $\Psi$, where $\Psi(m) := \ln |m|^2$ for all $m \in \cL$.
Moreover, the following hold:
\begin{enumerate}
\item For all $t \in \R$, the map $p$ induces a symplectomorphism from $\cL \modt $ to $X$ 
with the symplectic form $\nu + t \zeta \in \Omega^2(X)$.
\item $p$ induces a biholomorphism from $(\cL,J)/C^\times$ to $(X,I)$.
\item $\omega(\xi, J(\xi)) > 0$, where $\xi \in \chi(\cL)$ generates the $S^1$ action.
\item $J$ tames $\omega$ on $\Psi\inv\big((0,\infty) \big)$.
\end{enumerate}
\end{Lemma}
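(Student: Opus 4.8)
The plan is to recognize $\omega$ as a fibered (minimal-coupling) form over $(X,I)$ and reduce every assertion to two elementary facts: that $\omega$ is positive along the $\C^\times$-fibers, and that $\nu$ is ``$I$-neutral'' while $\zeta$ is $I$-positive. First I would apply Lemma~\ref{deldelbar} to fix a hermitian metric on the line bundle containing $\cL$ whose associated form, in the sense of Definition~\ref{d37}, is $\zeta$; thus $\zeta=\frac{\sqrt{-1}}{2\pi}\del\delbar\,s^*\Psi$ for every local holomorphic section $s$. Writing a point of $\cL$ as $m=w\,s(p(m))$ in a local trivialization gives $\Psi=\ln|w|^2+p^*(s^*\Psi)$, and since $\ln|w|^2$ is pluriharmonic this yields the global identity $\del\delbar\Psi=-2\pi\sqrt{-1}\,p^*\zeta$. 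Expanding $\del\delbar\Psi^2=2\,\del\Psi\wedge\delbar\Psi+2\Psi\,\del\delbar\Psi$ then rewrites the form as
\begin{equation*}
\omega=p^*\nu+\Psi\,p^*\zeta+\tfrac{\sqrt{-1}}{2\pi}\,\del\Psi\wedge\delbar\Psi .
\end{equation*}
From the original expression $\omega=p^*\nu+\frac{\sqrt{-1}}{4\pi}\del\delbar\Psi^2$ it is immediate that $\omega$ is real, closed (as $p^*\nu$ is closed and $\del\delbar$ of a function is $d$-closed), and $S^1$-invariant (both $p$ and $\Psi$ are $S^1$-invariant).

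Next I would verify the moment map property and isolate the key linear algebra. Since $p^*\nu$, $\Psi\,p^*\zeta$, and $\del\delbar\Psi$ are all horizontal, contracting the displayed form with the generator $\xi$ of the $S^1$-action leaves only $\iota_\xi\big(\frac{\sqrt{-1}}{2\pi}\del\Psi\wedge\delbar\Psi\big)$; using $\xi(\Psi)=0$ and that $(\del\Psi)(\xi)$ is a nonzero imaginary constant, a short computation gives $\iota_\xi\omega=-d\Psi$, so $\Psi$ is a moment map. The decisive observation is that, because $\nu\in\Omega^{2,0}(X)\oplus\Omega^{0,2}(X)$, one has $\nu(\bar v,I\bar v)=0$ for every tangent vector $\bar v$ of $X$, whereas $\zeta(\bar v,I\bar v)>0$ for $\bar v\neq0$ since $\zeta$ is K\"ahler. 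Hence $(\nu+t\zeta)(\bar v,I\bar v)=t\,\zeta(\bar v,I\bar v)$, which shows that $\nu+t\zeta$ is non-degenerate for every $t\in\R$ (if $v$ lies in its kernel then $t\,\zeta(v,Iv)=0$, forcing $v=0$ when $t\neq0$, and $\nu$ is non-degenerate when $t=0$) and is tamed by $I$ precisely when $t>0$.

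The remaining work is a vertical--horizontal decomposition. At each $m\in\cL$ I would split $T_m\cL=V_m\oplus H_m$, where $V_m=\ker dp_m$ is the (complex) fiber direction and $H_m:=\ker d\Psi\cap\ker(d\Psi\circ J)$ is a $J$-invariant complement mapped isomorphically to $T_{p(m)}X$ by $dp$. On $H_m$ both $\del\Psi$ and $\delbar\Psi$ vanish, so the last summand of $\omega$ drops out and $\omega|_{H_m}$ is identified via $dp$ with $\nu+\Psi\zeta$; meanwhile $\omega|_{V_m}=\frac{\sqrt{-1}}{2\pi}\del\Psi\wedge\delbar\Psi|_{V_m}$ is the positive area form on the fiber. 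Because $\del\Psi$ and $\delbar\Psi$ vanish on $H_m$ while $p^*$-forms vanish on $V_m$, the two blocks are $\omega$-orthogonal and each is $J$-invariant. Non-degeneracy of $\omega$ then follows from non-degeneracy of the two blocks, and property~(1) follows since $T_m\Psi^{-1}(t)=H_m\oplus\R\xi$, so the reduced form on $\cL\modt\cong X$ is $\omega|_{H_m}=\nu+t\zeta$. For any $v=v_V+v_H$ the orthogonality gives $\omega(v,Jv)=\omega(v_V,Jv_V)+\omega(v_H,Jv_H)$; the first term is the positive fiber contribution and the second equals $\Psi\,\zeta(dp\,v_H,I\,dp\,v_H)$ by the neutrality of $\nu$. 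This yields property~(3) (take $v=\xi$) and property~(4): both terms are non-negative, and strictly positive for $v\neq0$ exactly when $\Psi>0$. Finally, property~(2) is essentially tautological, since $\cL$ is a holomorphic $\C^\times$-bundle over $X$ and $J$ makes $p$ and the action holomorphic, so $p$ descends to a biholomorphism $(\cL,J)/\C^\times\to(X,I)$.

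I expect the main obstacle to be the vertical--horizontal bookkeeping: checking that $H_m=\ker d\Psi\cap\ker(d\Psi\circ J)$ is genuinely $J$-invariant, $\omega$-orthogonal to the fiber, and annihilates $\del\Psi\wedge\delbar\Psi$. Once this splitting is in place, non-degeneracy, the identification of the reduced form, and the taming all decouple into the two blocks, and the sign governing the taming is pinned down entirely by the neutrality of $\nu$ together with the positivity of $\Psi\zeta$ on the horizontal space.
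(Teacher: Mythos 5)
Your proposal is correct and matches the paper's proof in all essentials: the same appeal to Lemma~\ref{deldelbar}, the same expansion $\omega = p^*\nu + \Psi\, p^*\zeta + \frac{\sqrt{-1}}{2\pi}\del\Psi\wedge\delbar\Psi$, the same computation $\iota_\xi\omega = -d\Psi$, and the same splitting, since your $H_m = \ker d\Psi\cap\ker(d\Psi\circ J)$ coincides with the paper's subspace $V$ (defined by $\eta(v,\xi)=\eta(v,J\xi)=0$, which is equivalent because $\iota_\xi\eta=-d\Psi$ and $\eta$ has type $(1,1)$). The only cosmetic difference is that the paper isolates $\eta := \frac{\sqrt{-1}}{4\pi}\del\delbar\Psi^2$ and shows it is K\"ahler on $\Psi^{-1}\big((0,\infty)\big)$ before adding the $J$-neutral term $p^*\nu$, whereas you carry all of $\omega$ through the block decomposition at once.
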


\begin{proof}
By Lemma~\ref{deldelbar}, there exists
a hermitian metric on the associated line bundle containing $\cL$ such that
$$\textstyle \zeta = \frac{\sqrt{-1}}{2 \pi} \del \delbar s^*(\Psi)$$
for every local holomorphic section $s$ of $\cL$, where $\Psi(m) := \ln |m|^2$ for all  $m \in \cL$. Equivalently, $p^*(\zeta)=\frac{\sqrt{-1}}{2 \pi} \del \delbar \Psi$. Define an
$S^1$ invariant closed real form $\eta \in \Omega^{1,1}(\cL)$
by 
$$\textstyle \eta :=   
\frac{\sqrt{-1}}{4 \pi} \del \delbar \Psi^2
= \frac{\sqrt{-1}}{2 \pi} \left(  \Psi \del \delbar \Psi +  \del \Psi \wedge \delbar \Psi  \right) 
.$$
By a straightforward calculation in coordinates $d\Psi \neq 0$; moreover, 
$\iota_\xi \del \Psi=2 \pi i$, $\iota_\xi \delbar \Psi=-2 \pi i$, and $\iota_\xi \del \delbar \Psi=0.$
Therefore,
$$\textstyle \iota_\xi \eta=- \delbar \Psi - \del \Psi = - d \Psi.$$
By another straightforward calculation, this implies that
$$\textstyle \eta(\xi, J(\xi)) = 4 \pi > 0.$$
Moreover, since $\del \Psi \wedge \delbar \Psi =
\frac{1}{2}  d \Psi \wedge  (\delbar \Psi - \del \Psi),$
$$\textstyle \eta|_{\Psi\inv(t)} = \frac{\sqrt{-1}}{2 \pi} t \del \delbar \Psi= t p^*(\zeta)$$
 for all $t \in \R$. 

Fix any $m \in \cL$ with $t := \Psi(m)  > 0$.
Since $\eta(\xi,J(\xi)) > 0$, $T_m\cL$ is the direct sum of the subspace
$$V:= \{ v \in T_m \cL \mid \eta(v,\xi) = \eta(v,J(\xi)) = 0 \} \subset T_m \cL$$
and the subspace spanned by $\xi$ and $J(\xi)$.
Moreover, since $\eta$ is a $(1,1)$-form, the pullback $J^*(\eta)$ is equal to $\eta$.
Hence, $\eta(J(v),\xi)=-\eta(v,J(\xi))=0$ and $\eta(J(v),J(\xi))=\eta(v,\xi)=0$ for all $v \in V$.
Thus, $V$ is $J$-invariant and the map $p_* \colon T_m \cL \to T_{p(m)} X$ restricts to an isomorphism of complex vector spaces
$$\textstyle V  \to T_{p(m)} X.$$
Since $\iota_\xi\eta=-d\Psi$, $V$ is a subspace of $T_m\Psi^{-1}(t)$.
Since $\eta|_{\Psi\inv(t)} = t p^*(\zeta)$,
this implies that $\eta(v,J(v)) = t \zeta(p_*(v),I(p_*(v)))$ for all $ v \in V$. 
Furthermore, since  $t > 0$ and $\zeta$
is a K\"ahler orbifold form on $(X,I)$, the
latter quantity is positive  if $v \neq 0$.
Hence, $\eta(v,J(v)) > 0$ for all non-zero $v \in V$.
Since $\eta(\xi,J(\xi))>0$, this implies that $\eta(v,J(v)) > 0$ for all nonzero $v \in T_m \cL$. 
Thus, $\eta$ is a K\"ahler form on $\Psi^{-1}((0,\infty))$.

Finally, since $\nu \in \Omega^{2,0}(X) \oplus \Omega^{0,2}(X)$ and $\zeta$ is a K\"ahler orbifold form, if $t \neq 0$ then $(\nu + t \zeta)(w,I(w)) = t \zeta(w,I(w)) \neq 0$ for every non-zero $w \in TX$.
Since $\nu$ is symplectic, this implies that
$\nu + t \zeta$ is  symplectic for all $t \in \R$.
Define an
$S^1$ invariant closed $2$-form $\omega \in \Omega^2(\cL)$
by $\omega :=  p^*(\nu) + \eta$. 
By the results in the first paragraph, $\Psi$ is regular, $\iota_\xi \omega = - d \Psi$, and $\omega|_{\Psi\inv(t)} =  p^*(\nu+ t \zeta)$ for
all $t \in \R$. Since $\nu + t \zeta$ is symplectic for all $t$,
this implies that $\omega$ is symplectic, $\Psi$ is a moment
map for the $S^1$ action on $\cL$, and $p$ induces a symplectomorphism
from  
 $\cL \modt$ to $X$ with the symplectic form $\nu + t \zeta$ for all $t \in \R$.
Finally, since $p^* (\nu) \in \Omega^{2,0}(\cL) \oplus \Omega^{0,2}(\cL)$,
$\omega(v,Jv) = \eta(v,Jv)$ for all $v \in T\cL$.
Therefore, by the preceding paragraphs, $\omega(\xi,J(\xi)) > 0$
and $J$ tames $\omega$ on $\Psi^{-1}((0,\infty))$. \end{proof}

Finally, we show how in the situation of Proposition~\ref{p:one} we can construct another complex structure that also has the desired properties.

\begin{Proposition}\label{p:two}
Let $(X,I)$ be a closed complex orbifold, $\zeta \in \Omega^{1,1}(X)$ be a K\"ahler orbifold form, and $\nu \in \Omega^{2,0}(X) \oplus \Omega^{0,2}(X)$ be a real symplectic orbifold form.
Let $p \colon \cL \to X$ be a holomorphic $\C^\times$ bundle with Euler class $-[\zeta]$.
Then there exist an $S^1 \subset \C^\times$ invariant symplectic form $\omega \in \Omega^2(\cL)$ with a proper moment map $\Psi \colon \cL \to \R$, and complex structures $J$ and $J'$ on $\cL$ that
commute with the $\C^\times$ action, so that the following hold:
\begin{enumerate}
\item For each $t \in \R$, the map $p$ induces a symplectomorphism from $\cL \modt$ to $X$ 
with the symplectic form $\nu + t \zeta \in \Omega^2(X)$.
\item $p$ induces a biholomorphism  from $(\cL,J)/\C^\times$ to $(X,I)$, and an
anti-biholomorphism from $(\cL,J')/\C^\times$ to $(X,I)$.
\item $\omega(\xi, J(\xi)) > 0$ and $\omega(\xi, J'(\xi)) > 0$, where $\xi \in \chi(\cL)$ generates the $S^1$ action.
\item $J$ and $J'$ tame $\omega$ on the preimages $\Psi\inv\big((0,\infty)\big)$ and $\Psi\inv\big((-\infty,0)\big)$, respectively.
\end{enumerate}
\end{Proposition}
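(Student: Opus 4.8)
The plan is to build the symplectic form and the first complex structure $J$ by applying Lemma~\ref{p:one} directly, and then to produce the second complex structure $J'$ by pulling $J$ back under an orientation-reversing symmetry of the bundle. Concretely, Lemma~\ref{p:one} already hands us a hermitian metric on the line bundle containing $\cL$ together with the $S^1$-invariant form $\omega = p^*(\nu) + \frac{\sqrt{-1}}{4\pi}\del\delbar\Psi^2$, where $\Psi(m)=\ln|m|^2$, and it verifies conclusions (1)--(4) of that lemma: $\Psi$ is a moment map, each reduced space is $(X,\nu+t\zeta)$, we have $\omega(\xi,J(\xi))>0$, and the natural complex structure $J$ tames $\omega$ on $\Psi^{-1}((0,\infty))$. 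So the form $\omega$, the moment map $\Psi$, and the complex structure $J$ of Proposition~\ref{p:two} will all be taken straight from Lemma~\ref{p:one}. It remains to check that $\Psi$ is proper and to manufacture $J'$ with the mirror-image properties on $\Psi^{-1}((-\infty,0))$.

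For $J'$ I would exploit the fiberwise inversion $\iota \colon \cL \to \cL$ sending $m$ to the element of the dual bundle (equivalently, the map induced by $w \mapsto w^{-1}$ on the $\C^\times$ fibers). This is a diffeomorphism that conjugates the $\C^\times$-action by the automorphism $\lambda \mapsto \lambda^{-1}$, and it satisfies $\Psi \circ \iota = -\Psi$, since $\ln|w^{-1}|^2 = -\ln|w|^2$. I would then set $J' := -\iota_* \, J \, \iota_*^{-1}$; the sign is chosen so that $J'$ is again an integrable almost complex structure commuting with the $\C^\times$-action but inducing the \emph{conjugate} complex structure on the fibers, which is exactly what produces an anti-biholomorphism downstairs. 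Because $\iota$ reverses the sign of $\Psi$ and we insert a sign in the definition of $J'$, the taming statement for $J$ on $\Psi^{-1}((0,\infty))$ transports to a taming statement for $J'$ on $\Psi^{-1}((-\infty,0))$, giving conclusion (4); and $\omega(\xi,J'(\xi))>0$ follows from the corresponding inequality for $J$ by the same transport, giving conclusion (3). Conclusion (2) for $J'$ is the assertion that $p$ induces an anti-biholomorphism $(\cL,J')/\C^\times \to (X,I)$, which is immediate once one checks that $\iota$ descends to the identity on the base while conjugating the fiber complex structure.

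The most delicate points are the sign bookkeeping in the definition of $J'$ and the verification that $\iota_*$ interacts correctly with $\omega$. The subtlety is that $\iota^*\omega$ need not equal $\omega$: the $\eta = \frac{\sqrt{-1}}{4\pi}\del\delbar\Psi^2$ part is even in $\Psi$ and hence $\iota$-invariant, but one must confirm that the taming inequality $\omega(v,J'(v))>0$ for $v$ in the relevant region really does follow, and this requires relating $\omega(v,J'(v))$ to $(\iota^*\omega)(w, J(w))$ for $w = \iota_*^{-1}(v)$ and then comparing $\iota^*\omega$ with $\omega$ on the horizontal and vertical subspaces separately. The cleanest way to organize this is to observe that $\iota$ is an anti-symplectomorphism of the contact-type structure in the fiber direction while preserving $p^*(\nu)$, so that the quadratic form $v \mapsto \omega(v,J'(v))$ is carried, up to the sign built into $J'$, to $w \mapsto \omega(w, J(w))$ on the region $\Psi>0$. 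I expect the properness of $\Psi$ to be a genuinely separate (though easy) point: since $\Psi(m) = \ln|m|^2$ tends to $\pm\infty$ as $m$ leaves every compact set in the fiber direction and $X$ is compact, $\Psi$ is proper, so $\cL$ together with this data satisfies all four conclusions of Proposition~\ref{p:two}.
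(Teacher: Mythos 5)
Your first step matches the paper exactly: the paper also takes $\omega$, $\Psi$, and $J$ straight from Lemma~\ref{p:one} (properness of $\Psi$ being immediate because $X$ is closed). The gap is in your construction of $J'$. First, the ``fiberwise inversion'' $\iota$ you describe is not a well-defined self-map of $\cL$: in local trivializations $w \mapsto w^{-1}$ intertwines the transition functions with their inverses, so it is a map from $\cL$ to the \emph{dual} bundle, not to $\cL$; and here $\cL$ is never self-dual, since $e(\cL) = -[\zeta]$ while the dual has Euler class $+[\zeta] \neq -[\zeta]$ ($\zeta$ is K\"ahler, so $[\zeta] \neq 0$). Hence $-\iota_* J \iota_*^{-1}$ lives on the wrong space. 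Second, and more seriously, the properties you assign to $\iota$ --- holomorphic on fibers, conjugating the action by $\lambda \mapsto \lambda^{-1}$, with the $\eta$-part of $\omega$ being $\iota$-invariant --- are self-defeating wherever they make sense: a $J$-holomorphic $\iota$ forces $J' := -\iota_* J \iota_*^{-1} = -J$, and then conclusion (3) fails outright, $\omega(\xi, J'(\xi)) = -\omega(\xi, J(\xi)) < 0$. The same sign kills your taming transport: from $\iota^* \omega = \omega$ (which follows from your claims, since also $p \circ \iota = p$) one gets, for $v \neq 0$ over $\Psi < 0$ and $w = \iota_*^{-1}(v)$ over $\Psi > 0$,
$$\omega(v, J'(v)) = -(\iota^*\omega)(w, J(w)) = -\omega(w, J(w)) < 0,$$
so $J'$ \emph{anti}-tames $\omega$ on $\Psi^{-1}\big((-\infty,0)\big)$. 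Moreover, since $\lambda \mapsto \lambda^{-1}$ is holomorphic, the inserted minus sign makes the $\C^\times$-action anti-holomorphic in $\lambda$ with respect to $J'$, violating the requirement that $J'$ commute with the action.

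The paper's construction differs from yours in exactly the two points above, and this is what makes it work. It uses the hermitian metric to define $\Lambda(m) := m/|m|^2$, which \emph{is} a self-map of $\cL$ (fiberwise inversion composed with the anti-linear metric identification of the dual bundle with $\cL$), and sets $J' := -\Lambda_* J \Lambda_*^{-1}$. The two key features are the negations of your claims: $\Lambda$ intertwines the action with $\lambda \mapsto \overline{\lambda}^{-1}$, which is anti-holomorphic in $\lambda$ (this cancels the minus sign, so the $\C^\times$-action is $J'$-holomorphic) and restricts to the identity on $S^1$ (so $\Lambda_* \xi = \xi$, giving (3)); and $\Lambda$ is \emph{not} holomorphic --- it swaps types, $\Lambda^*(\del \Psi) = -\delbar \Psi$ --- so the $\eta$-part of $\omega$ is \emph{odd} rather than invariant, $\Lambda^* \omega = 2 p^*(\nu) - \omega$. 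Combining this with $p^*(\nu)(\,\cdot\,, J'(\cdot)) = 0$ yields $\omega(v, J'(v)) = \omega(\Lambda_*^{-1} v, J(\Lambda_*^{-1} v))$ with the correct positive sign, from which (3) and (4) follow. Without the metric twist built into $\Lambda$, no choice of sign in your definition of $J'$ can satisfy both (3) and (4).
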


\begin{proof}
Let $J$ be the natural complex structure on $\cL$.
By Lemma~\ref{p:one}, there exists
a  hermitian metric on the associated line bundle containing $\cL$ such that
$$\textstyle \omega := p^*(\nu) + \frac{\sqrt{-1}}{4 \pi} \del \delbar \Psi^2 = p^*(\nu) + \frac{\sqrt{-1}}{2\pi}(\Psi \del \delbar \Psi + \del \Psi \wedge \delbar \Psi) \in \Omega^2(\cL)$$
 is an $S^1$ invariant  symplectic form with moment map  $\Psi$, where $\Psi(m) := \ln |m|^2$ for all $m \in \cL$. (Here, and elsewhere in this proof, we take the Dolbeault operators with respect to $J$.) Moreover, the following hold:
\begin{enumerate}
\item For all $t \in \R$, the map $p$ induces a symplectomorphism from $\cL \modt $ to $X$ 
with the symplectic form $\nu + t \zeta \in \Omega^2(X)$.
\item $p$ induces a biholomorphism from $(\cL,J)/\C^\times$ to $(X,I)$.
\item $\omega(\xi, J(\xi)) > 0$, where $\xi \in \chi(\cL)$ generates the $S^1$ action.
\item $J$ tames $\omega$ on $\Psi\inv\big((0,\infty) \big)$.
\end{enumerate}

Define a diffeomorphism $\Lambda \colon \cL \to \cL$ by
$\Lambda(m) := \frac{m}{|m|^2} $ for all $m\in \cL$; note that $p \circ \Lambda=p$.
Let $J'$ be the complex structure on $\cL$ that is conjugate to the pull-back of $J$ by $\Lambda$, i.e., 
$$\textstyle J'(v)=-\Lambda_*(J(\Lambda_*^{-1}(v)))$$
for all $v \in T\cL$.

The map $\Lambda$ intertwines the standard $\mathbb{C}^\times$-action and the conjugate inverse action on $\cL$, i.e.,
$\Lambda( \lambda \cdot m) = \overline{\lambda}^{-1} \cdot \Lambda(m)$ for all $\lambda \in \C^\times$ and $m \in \cL$.
Hence, since the standard $\mathbb{C}^\times$-action on $\cL$ is $J$-holomorphic and the map sending $\lambda \in \mathbb{C}^\times$ to $\overline{\lambda}^{-1}$ is anti-holomorphic, the standard $\mathbb{C}^\times$-action is also $J'$-holomorphic.
Moreover, since $p$ induces a biholomorphism from $(\cL,J)/\C^\times$ to $(X,I)$, it induces an anti-biholomorphism from $(\cL,J')/\C^\times$ to $(X,I)$.

By a straightforward calculation in coordinates, we have $\Lambda^*(\Psi) = -\Psi$, $\Lambda^*(\del \Psi)=-\delbar \Psi$, $\Lambda^*(\delbar \Psi)=-\del \Psi$, and $\Lambda^*(\del \delbar \Psi)=\del \delbar \Psi$. Therefore,
$$\textstyle \Lambda^*(\omega)=2p^*(\nu)-\omega.$$
Because $p$ is anti-holomorphic with respect to $J'$ and $\nu \in \Omega^{2,0}(X) \oplus \Omega^{0,2}(X)$,
$$\textstyle p^*(\nu)(v,J'(v))=\nu(p_*(v),p_*(J'(v)))=-\nu(p_*(v),I(p_*(v)))=0$$
for all $v \in T \cL$. The three displayed equations above imply that $\omega(v,J'(v))=\omega(\Lambda_*^{-1}(v),J(\Lambda_*^{-1}(v)))$
for all $v \in T\cL$. In particular, since $\Lambda$ is $S^1$-equivariant, $\omega(\xi,J'(\xi))=\omega(\xi,J(\xi))>0$.
Moreover, since $J$ tames $\omega$ on $\Psi^{-1}((0,\infty))$ and $\Psi^{-1}((-\infty,0))=\Lambda(\Psi^{-1}((0,\infty)))$,
this implies that $J'$ tames $\omega$ on $\Psi^{-1}((-\infty,0))$.
\end{proof}

\section{Circle actions with fixed points}
\labell{fixed}

In this section, we finally consider Hamiltonian circle actions on symplectic manifolds with
fixed points. However, each regular reduced space is still symplectomorphic to a generalized
K3 surface with isolated $\Z_2$ singularities.
As in the previous section, we don't give a complete classification
but simply construct the examples that we need, corresponding to 
portions of the manifolds described in the introduction
that contain all the points with non-trivial stabilizers, e.g., the part of the manifold in Theorem~\ref{theorem10} that lies
over $(1- \tepsilon, 9 + \tepsilon) \subset \R/10\Z$,
where  $\tepsilon >0$.

\begin{Proposition}\labell{existsfixed2}
Fix  $k \in \{1, \dots, 9\}$, a positive even integer $A$,
and an even integer $B > - \frac{1}{2} k$.
There exists a 
circle action on a symplectic manifold $(\Tilde{M},\Tilde{\omega})$ 
with
a proper moment map $\Tilde\Psi \colon \Tilde{M} \to ( -4-\Tilde{\epsilon}, 
 4 + \Tilde{\epsilon})$, where $\Tilde \epsilon > 0$,  with the following properties:
The level sets  $\Tilde{\Psi}\inv( \pm 4)$ each contain exactly $k$ fixed
points
with weights $\pm \{-2,1,1\}$; otherwise, the action is locally free.
The Duistermaat-Heckman function of $\tM$ is
$$\Tilde \varphi(t) = \begin{cases} 
A - 8k - 4kt + Bt^2  &  t \in ( - 4 - \Tilde{\epsilon},  -4]  \\
A +(B + \frac{1}{2}k) t^2 & t \in [ -4,   4] \\
A  - 8k + 4kt + Bt^2  &  t \in [ 4,  4 + \Tilde{\epsilon}). \\
\end{cases}
$$
The reduced space $\Tilde{M} \modt $ is
diffeomorphic to a generalized K3 surface with $k$ isolated $\Z_2$ singularities  for all $t \in (-4,4)$,
and symplectomorphic to a tame K3 surface   
$(\Hat{X},\Hat{I},(\Hat{\sigma}_\pm)_t)$, 
where $(\Hat{\sigma}_\pm)_t \in \Omega^2(\Hat{X})$ satisfies
$\big[(\Hat{\sigma}_\pm)_t\big] = \Hat{\kappa}_\pm -  t \Hat{\eta}_\pm$,
for all $t \in  \pm (4, 4 +  \Tilde{\epsilon})$; moreover,
$\Hat{\kappa}_\pm,\Hat{\eta}_\pm$
induce a primitive embedding $\Z^2 \hookrightarrow H^2(\Hat{X};\Z)$.
\end{Proposition}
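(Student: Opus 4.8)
The plan is to manufacture $\tM$ from the locally free manifold $\breve M$ of Proposition~\ref{plf2} by inserting two families of fixed points near the levels $t=\pm 4$, following the symplectic cut-and-blow-up technique of \cite{TWa}. First apply Proposition~\ref{plf2} with the prescribed $k$, $A$, $B$ to obtain $(\breve M,\breve\omega)$, its locally free $\C^\times$-action, the proper moment map $\breve\Psi\colon\breve M\to\R$, the projection $\breve\pi$ onto the generalized K3 surface $X$ with $k$ isolated $\Z_2$ points, and the complex structures $\breve J_\pm$ that tame $\breve\omega$ on $\breve\Psi\inv\big(\pm(0,\infty)\big)$ and identify $(\breve M,\breve J_\pm)/\C^\times$ (anti)biholomorphically with $(X,I)$. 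Over the middle interval $(-4,4)$ I would leave $\breve M$ untouched, so that the reduced space is $X$ and the Duistermaat--Heckman function equals $A+(B+\tfrac12 k)t^2$ there by Proposition~\ref{plf2}(1); all the work is concentrated at the two ends.

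Near $t=4$ I would use the holomorphic, taming structure $\breve J_+$ to perform a symplectic cut of $\breve M$ at level $4$, and near $t=-4$ the structure $\breve J_-$ to cut at $-4$. Since $\breve M$ is the complement of the zero section of the line bundle underlying $\cL$, the two cuts compactify each $\C^\times$-fiber to a $\CP^1$, producing a closed complex orbifold $M_\cut$ fibered over $X$ with fiber $\CP^1$ whose two sections are $S^1$-fixed divisors isomorphic to $X$ over $t=\pm4$. Its only singular points are the $2k$ isolated $\Z_2$ points over the singular points of these two copies of $X$; at such a point the local model is $\C^3/\Z_2$ with $\Z_2=-\mathrm{Id}$ and residual weights $\{0,0,-1\}$. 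Blowing up $M_\cut$, as a complex analytic space, at these $2k$ points replaces each by the total space of $\cO_{\CP^2}(-2)$ — smooth, because $\cL$ is a manifold by Proposition~\ref{propKT}. On the exceptional $\CP^2$ the residual $S^1$ has weights $\{0,0,-1\}$; at its isolated weight-$(-1)$ fixed point the tangent weights are $\{1,1\}$ along $\CP^2$ and $-2$ along the $\cO(-2)$-fiber (the fiber coordinate is squared by the $\Z_2$-quotient), i.e.\ exactly $\{-2,1,1\}$. Finally, following \cite{TWa}, I would perturb the symplectic form by an amount governed by $\tepsilon>0$ so as to push the leftover weight-$0$ loci (the exceptional $(-2)$-curves and the old fixed sections) to moment values $\geq 4+\tepsilon$, and set $\tM:=\tPsi\inv\big((-4-\tepsilon,4+\tepsilon)\big)$; this keeps precisely the $k$ isolated fixed points of weights $\{-2,1,1\}$ at $t=4$ and, symmetrically via $\breve J_-$, the $k$ fixed points of weights $\{2,-1,-1\}$ at $t=-4$, with the action locally free elsewhere.

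It then remains to check the reduced-space and Duistermaat--Heckman claims at the ends. Crossing the $k$ fixed points at $t=4$ resolves the $k$ orbifold points of $X$, so for $t\in(4,4+\tepsilon)$ the reduced space is the minimal resolution $\Hat X$ — a smooth K3 surface — tamed by the structure descended from $\breve J_+$, hence a tame K3 surface $(\Hat X,\Hat I,(\Hat\sigma_+)_t)$. To pin down its class I would transport $[\breve\sigma_t]=[\nu]+t[\zeta]$ across the wall: pulling back along $q\colon\Hat X\to X$ (tracking $q^*[\zeta]$ and $q^*[\nu]$ via Lemmas~\ref{current}, \ref{l38}, \ref{l39}) and subtracting the exceptional correction $\delta(t)\sum_i\cE_i$ with $\delta(t)=\tfrac{t}{2}-2$ — which vanishes at $t=4$, so the exceptional curves collapse exactly there — yields $[(\Hat\sigma_+)_t]=\Hat\kappa_+-t\,\Hat\eta_+$ with $\Hat\eta_+=-\Hat\phi\inv(\Hat\kappa)$ and $\Hat\kappa_+=\Hat\phi\inv(\beta)+2\sum_i\cE_i$ in the notation of Lemma~\ref{roots}. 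A direct pairing gives $(\Hat\eta_+,\Hat\eta_+)=B$, $(\Hat\kappa_+,\Hat\eta_+)=-2k$, and $(\Hat\kappa_+,\Hat\kappa_+)=A-8k$, so $\tilde\varphi(t)=[(\Hat\sigma_+)_t]^2=A-8k+4kt+Bt^2$ on $[4,4+\tepsilon)$, matching continuously with the middle piece at $t=4$; the $t\mapsto -t$ symmetry of the $\breve J_+\leftrightarrow\breve J_-$ construction produces the other outer piece. Primitivity of the embedding $\Z^2\hookrightarrow H^2(\Hat X;\Z)$ spanned by $\Hat\kappa_+,\Hat\eta_+$ follows from the explicit primitive sublattice of Lemma~\ref{roots}.

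The main obstacle is the middle step: constructing the perturbed blow-up so that it is simultaneously smooth, symplectic, and equivariant with exactly the stated isolated fixed-point data, while expelling every positive-dimensional fixed locus (the old sections and the exceptional $(-2)$-curves) past the boundary of the moment image and leaving the reduced spaces over $(4,4+\tepsilon)$ symplectomorphic to tame K3 surfaces with the prescribed classes. The orbifold-versus-blow-up dictionary of Section~\ref{orbifold} governs the cohomology and the local weight computation above pins the fixed-point model, but coordinating the global perturbation with both at once — so that nothing degenerates and the quadratic numerology of Lemma~\ref{roots} comes out exactly — is the delicate part, and this is precisely where the techniques of \cite{TWa} are essential.
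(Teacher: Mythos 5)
Your proposal is correct and is essentially the paper's own construction: the paper likewise takes the locally free piece from Proposition~\ref{plf2} (equivalently Propositions~\ref{propKT} and~\ref{p:two}), inserts the fixed points at levels $\pm 4$ by exactly the cut--blow-up--perturb technique you describe (this is what \cite[Propositions 6.1 and 7.9]{TWa} encapsulate, applied to $(\cL,J)$ and $(\cL,J')$ centered at $\pm 4$), identifies $\Hat{\kappa}_\pm = q^*([\nu]) + 2\sum_i\cE_i$ and $\Hat{\eta}_\pm = -q^*([\zeta]) \pm \frac{1}{2}\sum_i \cE_i$, and finishes with the same quadratic-form arithmetic for $\Tilde\varphi$. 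The only notable deviations are organizational and minor: the paper builds two one-sided manifolds $\tM_\pm$ and glues them along the locally free middle rather than cutting twice to form a closed orbifold $M_\cut$, and for primitivity it pairs $\Hat{\kappa}_\pm \mp 4\Hat{\eta}_\pm = q^*([\nu \pm 4\zeta])$ (primitive by Proposition~\ref{propKT}) against the $\cE_i$, whereas your appeal to Lemma~\ref{roots} requires the explicit generators rather than mere primitivity of the big sublattice (a rank-two sublattice of a primitive sublattice need not be primitive), although that check does go through.
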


This proposition will follow from Proposition~\ref{propKT} and the following more general construction.

\begin{Proposition} \label{p62}
Let $(X,I)$ be a closed $2$-dimensional complex orbifold with $k$ isolated $\mathbb{Z}_2$ singularities, $\zeta \in \Omega^{1,1}(X)$ be a K\"ahler orbifold form, and $\nu \in \Omega^{2,0}(X) \oplus \Omega^{0,2}(X)$ be a real symplectic orbifold form.
Let $p \colon \cL \to X$ be a holomorphic $\C^\times$ bundle with Euler class $-[\zeta]$ so that $\cL$ is a manifold. Fix $c_+,c_->0$.

There exists a circle action on a symplectic manifold $(\tM, \tomega)$ with a proper moment map $\widetilde \Psi \colon \tM \to (-c_- - \widetilde \epsilon, c_+ + \widetilde \epsilon)$, where $\widetilde \epsilon > 0$, with the following properties: The level sets $\widetilde \Psi^{-1}( \pm c_\pm)$
each contain exactly $k$ fixed points with weights $\pm \{-2,1,1\}$;
otherwise, the action is locally free. 
For each $t \in (-c_-,c_+)$, the reduced space $\tM \modt$ is symplectomorphic to
$(X,\tsigma_t)$, where $[\tsigma_t] = [\nu + t \zeta]$.
For each $t \in \pm (c_\pm, c_\pm + \widetilde \epsilon)$, the reduced space $\tM \modt$ is symplectomorphic to 
$(\Hat X, (\Hat \sigma_\pm)_t)$, where $\Hat I$ tames $(\Hat \sigma_\pm)_t$ and
$[(\Hat \sigma_\pm)_t] = q^*([\nu+t \zeta]) + (c_\pm \mp t)\sum \cE_i / 2$; 
moreover, $-q^*([\zeta]) \pm \sum \cE_i / 2 \in H^2(\Hat X;\mathbb{Z})$.
Here, $q\colon (\Hat X, \Hat I) \to (X,I)$ is the blow up of $X$ at the singular points
and the $\cE_1,\dots,\cE_k  \in H^{1,1}(\Hat X;\R)$ are the Poincar\'e duals to the exceptional divisors.
\end{Proposition}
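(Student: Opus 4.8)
The plan is to realize $(\tM,\tomega)$ as an open subset of a blow up of a symplectic cut of the locally free model from Section~\ref{lfree}, following the method of \cite{TWa}. First I would feed $(X,I,\zeta,\nu)$ and the bundle $\cL$ into Proposition~\ref{p:two} to obtain the locally free Hamiltonian $\C^\times$-manifold $(\cL,\omega,\Psi)$ together with its two complex structures $J,J'$; recall that $\Psi$ is proper, that $\cL\modt$ is symplectomorphic to $(X,\nu+t\zeta)$ for every $t$, and that the $S^1$-action is free over the smooth locus of $X$ but has a $\Z_2$-stabilizer along the fibre over each singular point $p_i$ (this is exactly where $\cL$ fails to be a principal bundle). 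I would then apply Lerman symplectic cutting twice, discarding $\Psi>c_+$ and $\Psi<-c_-$, to build a closed symplectic orbifold $M_\cut$. Away from the cut loci this is $\Psi\inv\big((-c_-,c_+)\big)$; each cut glues in a copy of the reduced space $X$ at the extreme moment value. The only new singularities are isolated: over each $p_i$ the $\Z_2$-stabilizer acts by $-1$ on the two base directions and, because of the cutting construction, also by $-1$ on the normal direction to the glued-in divisor, so $M_\cut$ acquires exactly $2k$ isolated $\frac12(1,1,1)$ quotient singularities, $k$ at each end.

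Next I would resolve these by blowing up $M_\cut$ at the $2k$ singular points, as complex analytic spaces with respect to the complex structure coming from $J$ near the top and $J'$ near the bottom, and then restrict to $\widetilde\Psi\inv\big((-c_--\widetilde\epsilon,\,c_++\widetilde\epsilon)\big)$ for small $\widetilde\epsilon>0$. The key local computation is that blowing up a $\frac12(1,1,1)$ point produces a smooth chart carrying the residual circle action with weights $\{-2,1,1\}$ at one isolated fixed point $q_i$: in coordinates $(z_1,z_2,z)$ on $\C^3/\Z_2$ the residual action has weights $(0,0,-1)$, and in the blow-up chart near the tip $[0:0:1]$ the functions $z_1/z,\ z_2/z,\ z^2$ give smooth coordinates on which the action has weights $1,1,-2$. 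Crossing such a fixed point replaces $\C^2/\Z_2$ below by its minimal resolution $\mathcal{O}(-2)\to\CP^1$ above, so after this blow up the reduced spaces are still $X$ for $t\in(-c_-,c_+)$ but become the smooth K3 surface $\Hat X$ just beyond $\pm c_\pm$. A small perturbation of $\tomega$ supported near the exceptional loci is needed to give the exceptional curves positive area and to spread the extreme moment level into $\pm(c_\pm,c_\pm+\widetilde\epsilon)$; choosing $\widetilde\epsilon$ small enough that the excised interval contains no fixed $\CP^1$ coming from the exceptional divisors leaves exactly the $k$ isolated fixed points $q_i$ at each of $\pm c_\pm$ and a locally free action elsewhere.

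It then remains to pin down the cohomology classes and taming. For $t\in(-c_-,c_+)$ the reduced form lies in $[\nu+t\zeta]$ by construction. For $t\in\pm(c_\pm,c_\pm+\widetilde\epsilon)$ the Duistermaat-Heckman theorem \cite{DH} forces $[(\Hat\sigma_\pm)_t]$ to vary linearly in $t$ with slope the Euler class of the reduced bundle over $\Hat X$; matching at $t=c_\pm$ with the pulled-back class $q^*([\nu+c_\pm\zeta])$ and recording the area the perturbation assigns to each $(-2)$-curve gives $[(\Hat\sigma_\pm)_t]=q^*([\nu+t\zeta])+(c_\pm\mp t)\sum_i\cE_i/2$, where I would use Lemma~\ref{lcoho} and Lemma~\ref{current} to compare classes on $X$ and $\Hat X$ and $\int_{E_i}\cE_j=-2\delta_{ij}$ to verify the coefficient. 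The integrality claim is exactly the manifold condition from Lemma~\ref{l39}: writing $n_i:=\int_{E_i}e(\Hat\cL)$, that lemma gives $-q^*([\zeta])=q^*(e(\cL))=e(\Hat\cL)+\sum_i\frac12\cE_i\,n_i$, so $-q^*([\zeta])\pm\frac12\sum_i\cE_i=e(\Hat\cL)+\sum_i\frac12\cE_i(n_i\pm1)$, which is integral because each $n_i$ is odd. Finally, taming by $\Hat I$ for $t$ near $\pm c_\pm$ follows from the taming of $\omega$ by $J$ and $J'$ in Proposition~\ref{p:two}, since blow up is holomorphic and the perturbation is chosen small enough to preserve the open taming condition.

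The hard part will be the interplay in the second and third paragraphs: arranging the blow-up-plus-perturbation so that it yields \emph{isolated} fixed points with precisely the weights $\{-2,1,1\}$ and a clean $X\to\Hat X$ transition across the critical level, rather than a fixed divisor, while keeping the perturbation small enough to preserve both properness of $\widetilde\Psi$ and the taming by $\Hat I$. Coupled to this is the exact cohomological bookkeeping producing the correction term $(c_\pm\mp t)\sum_i\cE_i/2$, which requires carefully comparing currents and cohomology classes on $X$ and on $\Hat X$ through the lemmas of Section~\ref{orbifold}.
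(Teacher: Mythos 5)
Your proposal is correct and follows essentially the same approach as the paper: Proposition~\ref{p:two} to produce $(\cL,\omega,\Psi)$ with the commuting complex structures $J$ and $J'$, then the symplectic-cut/blow-up/perturbation method of \cite{TWa}, then Duistermaat--Heckman and Euler-class bookkeeping to pin down $q^*([\nu+t\zeta])+(c_\pm\mp t)\sum_i\cE_i/2$ and the integrality statement. The only organizational difference is that the paper never performs the cut, blow up, or perturbation by hand: it builds two halves $\tM_\pm$ by citing \cite[Proposition 6.1]{TWa} (applied to $(\cL,J)$ centered at $c_+$, and its mirror variant applied to $(\cL,J')$ centered at $-c_-$) to create the fixed points with weights $\pm\{-2,1,1\}$, and \cite[Proposition 7.9]{TWa} to identify the reduced spaces, taming, and classes beyond the critical levels, then glues $\tM_+$ and $\tM_-$ along their common locally free middle piece --- so the step you flag as ``the hard part'' is precisely the content of those cited propositions rather than something to be re-proved.
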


\begin{proof}
Let $\omega \in \Omega^2(\cL)$ be the $S^1 \subset \C^\times$ invariant symplectic form  with a proper moment map $\Psi \colon \cL \to \R$, and let $J$ and $J'$ be the complex structures on $\cL$ that
commute with the $\C^\times$ action, described in Proposition~\ref{p:two}.

We will prove the following claim: There exists $\tepsilon > 0$,
and circle actions on symplectic manifolds $(\tM_\pm,\tomega_\pm)$ with
proper moment maps $\tPsi_\pm \colon \tM_\pm \to \R$ with the following properties:
\begin{enumerate}
\item [(A)] The preimage $\tPsi_\pm^{-1}(-\tepsilon,\tepsilon) \subset \tM_\pm$ is equivariantly symplectomorphic
to $\Psi^{-1}(-\tepsilon,\tepsilon) \subset \cL$.
\item [(B)] The preimage
$\tPsi_\pm^{-1}\big(\pm (-\infty,c_\pm + \tepsilon)\big)$  contains 
exactly $k$ fixed points;  each lies in
$\tPsi_\pm^{-1}(\pm c_\pm)$  and has 
weights $\pm\{-2,1,1\}$.  
\item [(C)] For each $t \in \pm(-\infty,c_\pm)$, 
the reduced space $\tM_\pm \modt$ is symplectomorphic to
$(X,(\tsigma_\pm)_t)$, where $[(\tsigma_\pm)_t] = [\nu + t \zeta]$.
\item [(D)] For each $t \in \pm(-\infty, c_\pm + \widetilde \epsilon)$, 
the reduced space $\tM_\pm \modt$ is symplectomorphic to 
$(\Hat X, (\Hat \sigma_\pm)_t)$, where $\Hat I$ tames $(\Hat \sigma_\pm)_t$ and
$[(\Hat \sigma_\pm)_t] = q^*([\nu+t \zeta]) + (c_\pm \mp t)\sum \cE_i / 2$; 
moreover, $-q^*([\zeta]) \pm \sum \cE_i / 2 \in H^2(\Hat X;\mathbb{Z})$.
\end{enumerate}

Once we have proved this claim, we can conclude that $\tPsi_+^{-1}(-\tepsilon,\tepsilon) \subset \tM_+$
and $\tPsi_-^{-1}(-\tepsilon,\tepsilon) \subset \tM_-$ are equivariantly symplectomorphic.
Then we can use this symplectomorphism
to glue together $\tPsi_+^{-1}(-\tepsilon,c_+ + \tepsilon) \subset \tM_+$ and 
$\tPsi_-^{-1}(-c_- -\tepsilon,\tepsilon) \subset \tM_-$  to construct a circle action on
a symplectic manifold $(\tM,\tomega)$ with a proper moment map $\tPsi \colon \tM \to (-c_- - \tepsilon,
c_+ + \tepsilon)$ satisfying the desired properties.

We begin by constructing $\tM_+$.
By the first claim in Proposition~\ref{p:two},
for each $t \in \R$, the map $p$ induces a symplectomorphism from $\cL \modt$ to $X$ 
with the symplectic form $\nu + t \zeta \in \Omega^2(X)$.

The second claim of Proposition~\ref{p:two}  implies that
 $p$  also induces a biholomorphism from $(\cL,J)/\C^\times$ to $(X,I)$;
hence,  $U_t  :=  \C^\times \cdot \Psi^{-1}(t)$ is equal to $\cL$ for all $t \in \R$.
Moreover,  by the third claim, $\omega(\xi, J(\xi)) > 0$, where $\xi \in \chi(\cL)$ generates the $S^1$ action.
Hence, by \cite[Proposition 3.1]{TWa}, $J$ induces a complex structure
on the reduced space $\cL \modt$ so that the natural map $\cL \modt \to (U_t, J)/\C^\times$ is a biholomorphism. 
Therefore $p$ induces a biholomorphic symplectomorphism from $\cL \modt$ to $(X,I,\nu + t \zeta)$.

The last claim of Proposition~\ref{p:two} implies that $J$ tames $\omega$ on
the preimage $\Psi^{-1}\big((0,\infty)\big)$.
Additionally, since the reduced space $\cL \modc$ 
has $k$ isolated $\Z_2$ singularities, 
the $S^1$ action on $\Psi^{-1}(c_+)$ is free except for $k$ orbits with stabilizer $\Z_2$.
Therefore, we can apply \cite[Proposition 6.1]{TWa} (centered at $c_+$ instead of at $0$) to $(\cL, J)$
to find 
$\epsilon \in (0,2c_+/3)$, a complex manifold
$(\tM_+,\tJ_+)$ with a holomorphic $\C^\times$ action, an $S^1 \subset \C^\times$ invariant  symplectic form $\tomega_+ \in \Omega^2(\tM_+)$
satisfying $\tomega_+(\xi_{\tM_+}, \tJ_+(\xi_{\tM_+})) > 0$ on $\tM_+ \ssetminus \tM_+^{S^1}$, where $\xi_{\tM_+}$ generates the $S^1$ action,
and a proper moment map $\tPsi_+ \colon \tM_+ \to \R$ so that the following hold:
\begin{enumerate}
\item  $\Tilde\Psi_+\inv\big((c_+-\epsilon,c_+]\big)$
contains exactly $k$ fixed points; each lies in $\tPsi_+^{-1}(c_+)$ and has
weights $\{-2,1,1\}$.
\item There's an $S^1$ equivariant symplectomorphism
from $\Tilde\Psi_+\inv \big((-\infty,c_+ -\epsilon/2)\big)$ to $\Psi\inv \big((-\infty,c_+ -\epsilon/2)\big)$
that induces a biholomorphism from $\tM_+ \modt$ to $\cL \modt$ for all regular $t \in (-\infty, c_+ -\epsilon/2)$.
\item $\tomega_+$ is tamed on $\tPsi_+^{-1} \big((c_+ - \epsilon, c_+ + \epsilon)\big).$
\end{enumerate}

By Property (2) above, Claim (A) holds for all $\tepsilon \in (0,\epsilon)$.
Moreover, since the $S^1$ action on $\cL$ is locally free,
Properties (1) and (2) combine to show that there exists
$\tepsilon \in (0,\epsilon)$ so that Claim (B) holds.
Hence, by the paragraph above, Property (2) implies that 
$\tM_+ \modt$ is biholomorphically symplectomorphic to $(X, I, \nu + t \zeta)$
for each $t \in (-\infty, \tepsilon)$.
Moreover, $-[\zeta]$ is the Euler class of $\tPsi_{+}^{-1} (t) \to X$. 

Next, apply  \cite[Proposition 7.9]{TWa} (centered at $c_+$ instead of $0$) to 
$\tM_+$ with  $(a_-,a_+) = (-\infty,c_+ + \tepsilon)$.
By the preceding paragraph, we may assume that the complex orbifold and classes appearing in \cite[Proposition 7.9]{TWa}
are $(X,I)$ and $[\nu], -[\zeta] \in H^2(X;\R)$.  
(To see this take any $t < \tepsilon$.)
More precisely, the following hold:

\begin{itemize}
\item 
For each $t \in (-\infty,c_+)$ 
the   reduced space $\tM_+ \modt$ is  biholomorphically symplectomorphic to 
$(X,I,(\tsigma_+)_t)$, where $(\tsigma_+)_t \in \Omega^2(X)$ satisfies $[(\tsigma_+)_t] = [\nu + t \zeta]$. 
\item 
For each $t \in (c_+,c_+ + \tepsilon)$, 
the reduced space $\tM_+ \modt$ is biholomorphically symplectomorphic to 
$(\Hat X,\Hat I,(\Hat \sigma_+)_t)$, where $(\Hat \sigma_+)_t =q^*([\nu+t \zeta]) + (c_+ - t)\sum \cE_i / 2$;
moreover, $-q^*([\zeta]) + \sum \cE_i / 2 \in H^2(\Hat X;\mathbb{Z})$ because it is the Euler class of $\tPsi^{-1}_+(t) \to \Hat X$.
\end{itemize}

Claim (C) follows immediately.
Additionally, by Property (3) above and \cite[Proposition 3.1]{TWa}, 
$\Hat I$ tames $(\Hat \sigma_+)_t$ for all $t \in (c_+,c_++\tepsilon)$. 
So Claim (D) holds.

The construction of $\tM_-$ is the mirror image to that of $\tM_+$.
Let $I'$ be the conjugate complex structure on $X$.
By a slight modification of the preceding argument, if we equip $\cL$ with the complex structure
$J'$, then $p$ induces a biholomorphic
symplectomorphism from $\cL \modt$ to $(X,I', \nu + t \zeta)$. 
We then need the following variant of \cite[Proposition 6.1]{TWa}, which still holds by \cite[Remark 6.3]{TWa}:
Replace $(-\epsilon,0]$ by $[0,\epsilon)$ and $\{-2,1,\dots,1\}$ by $\{2, -1,\dots,-1\}$ in Claim (1) of that proposition;
and replace each $(-\infty, -\epsilon/2)$ by $(\epsilon/2,\infty)$ in Claim (2).
We apply this new proposition (centered at $-c_-$) to $(\cL, J')$.
Similarly, we apply the following variant of \cite[Proposition 7.9]{TWa} (centered at $-c_-$) to $\tM_-$ with $(a_-,a_+) = (-c_- - \tepsilon, \infty)$.
(which still holds by \cite[Remark 7.11]{TWa}.)
Replace $\{-2,1\dots,1\}$ by $\{2,-1,\dots,-1\}$; replace each
$(0,a_+)$ by $(a_-,0)$ and each $(a_-,0)$ by $(0,a_+)$; and replace each
$\sum \mathcal{E}_i$ by $-\sum \mathcal{E}_i$.  
In this case, for all $t \in (-c_--\epsilon, -c_-)$, the reduced space $\tM_- \modt$ is 
biholomorphically symplectomorphic to $(\Hat X,\Hat I,  (\Hat \sigma_-)_t)$,
where $(\Hat \sigma_-)_t =q^*([\nu+t \zeta]) + (c_- + t)\sum \cE_i / 2$;
moreover, $-q^*([\zeta]) - \sum \cE_i / 2 \in H^2(\Hat X;\mathbb{Z})$.
Otherwise, the argument is strictly analogous to the argument for $\tM_+$. \end{proof}

We are now ready prove our main proposition.

\begin{proof}[Proof of Proposition~\ref{existsfixed2}]
Together, Proposition~\ref{propKT} and Proposition~\ref{p62} (with $c_\pm = 4$) immediately imply this proposition,
except that we still need to prove that $\Hat \kappa_\pm, \Hat \eta_\pm$ induce
a primitive embedding $\mathbb{Z}^2 \hookrightarrow H^2(\Hat X;\mathbb{Z})$,
where $\Hat\kappa_\pm:=q^*([\nu]) + 2 \sum \cE_i \in H^2(\Hat X; \mathbb{R})$, $\Hat \eta_\pm:=-q^*([\zeta]) \pm \sum \cE_i / 2 \in H^2(\Hat X;\mathbb{Z})$, and
$\nu, \zeta \in \Omega^2(X)$.
Here, $q\colon \Hat X \to X$ is the minimal resolution and the $\cE_1,\dots,\cE_k  \in H^{1,1}(\Hat X;\R)$ are the Poincar\'e duals to the exceptional divisors.
Additionally, they imply that $q^*([\nu+2\ell \zeta])$ is primitive for all $\ell \in \mathbb{Z}$.
In particular, $\Hat \kappa_\pm \mp 4 \Hat \eta_\pm = q^*([\nu \pm 4 \zeta])$
is primitive.
Since $( \Hat \eta_\pm, \cE_i) = \mp 1$ and $(\Hat \kappa_\pm \mp 4 \Hat \eta_\pm, \cE_i)=0$ for all $i$,
this implies that
$\Hat \kappa_\pm \mp 4 \Hat \eta_\pm, \Hat \eta_\pm$ (and hence $\Hat \kappa_\pm,\Hat \eta_\pm$)
induce a primitive embedding $\Z^2 \hookrightarrow H^2(\Hat X;\Z)$.
\end{proof}

\section{Constructing the examples in Theorems \ref{maintheorem} and \ref{theorem10}}
\labell{proof}

We now have all the ingredients that we need to construct the symplectic manifolds
described in Theorems 1 and 2, and are ready to assemble them.
We begin by proving our main theorem.

\begin{proof}[Proof of Theorem \ref{maintheorem}]
Let an integer $k \geq 5$ be given.
Choose a positive integer $\ell$ and $k_1,\dots,k_\ell \in \{5,6,7,8,9\}$ such that 
$\sum_{i=1}^\ell k_i = k$.
Define $C_0 := -k_l$ and $C_j :=k_j + \sum_{i=1}^{j-1} 2k_i$ for all $i \in \{1,\dots,\ell\}$.  Note that
$C_1 = C_0 + k_\ell + k_1= C_\ell - 2k + k_\ell + k_1$ and that $C_j = C_{j-1} + k_{j-1} + k_j$ for all $j > 1$.
Finally,  choose an even integer $N$ such that   $N > 2  k_j (k_j  - 4) > 2(k_j - 4)^2$ for all $j$.

By construction, the polynomial $N - 2(t - C_j + k_j)^2$ is positive
on $[C_{j-1} + 4, C_j -4]$ for each $j \in \{1,\dots,\ell\}$.
Therefore, by Proposition \ref{existsfree2}, there exists a free circle action on a symplectic
manifold $(M'_j,\omega'_j)$ with a proper moment map 
$\Psi_j' \colon M_j' \to (C_{j - 1}  + 4, C_j  - 4)$ so that, for all 
$t \in (C_{j - 1} + 4, C_j  - 4)$, the reduced space $M_j' \modt$ is symplectomorphic to a tame
K3 surface $(X_j', I_{j,t}', \sigma_{j,t}')$; moreover,
\begin{itemize}
\item $\int_{X_j'} (\sigma_{j,t}')^2 = N  -2(t - C_j + k_j)^2$; and
\item $[\sigma_{j,t}'] = \kappa_j' - t \eta_j'$, where $\kappa_j'$,  $\eta_j'$
induce a primitive embedding $\Z^2 \hookrightarrow H^2(X_j';\Z)$.
\end{itemize}

Next, for each $j \in \{1,\dots,\ell\}$, we can apply Proposition~\ref{existsfixed2} (centered at $C_j$ instead of at $0$) with $k=k_j$, $A= N - 2k_j(k_j-4)$, and $B=-2$.
Hence, for each such $j$, there exists a circle action on a
symplectic manifold  $(\widetilde{M}_j,\widetilde{\omega}_j)$ with a proper moment map 
$\widetilde{\Psi}_j:\widetilde{M}_j \rightarrow (C_j -4-\widetilde{\epsilon}, C_j  + 4 +\widetilde{\epsilon})$, where $\widetilde{\epsilon} > 0$,
with the following properties:
The level sets $\widetilde{\Psi}_j^{-1}(C_j  \pm 4)$ 
each contain exactly  $k_j$  fixed points with weights $\pm\{-2,1,1\}$; otherwise the action is locally free. 
The Duistermaat-Heckman function of $\widetilde{M}_j$ is
\begin{center}
$\widetilde{\varphi}_j(t)=\begin{cases} 
N - 2(t - C_j + k_j)^2 & t \in (C_j-4- \widetilde{\epsilon},C_j  - 4] \\ 
N - 2k_j(k_j-4) + \frac{k-4}{2}(t - C_j)^2 & t \in [C_j  - 4, C_j  + 4] \\ 
N - 2(t - C_{j} - k_{j})^2 & t \in [C_j + 4,  C_j  + 4 +\widetilde{\epsilon}). 
\end{cases}$
\end{center}
The reduced space $\widetilde{M}_j\modt$ is diffeomorphic to a generalized K3 surface with $k_j$ isolated $\mathbb{Z}_2$ singularities for all  $t \in (C_j  -4,C_j  + 4)$, and symplectomorphic
to a tame K3 surface $(\Hat{X}_j,\Hat{I}_{j},(\Hat{\sigma}_{j\pm})_{t})$, 
where $(\Hat{\sigma}_{j\pm})_{t} \in \Omega^2(\Hat X_j)$ satisfies
 $[(\Hat{\sigma}_{j\pm})_{t}]=\Hat{\kappa}_{j\pm}-t\Hat{\eta}_{j\pm}$
for all  $t \in  C_j  \pm(4,4+ \tepsilon)$; moreover,
$\Hat{\kappa}_{j\pm}$, $\Hat{\eta}_{j\pm}$ induce a primitive embedding $\mathbb{Z}^2 \hookrightarrow H^2(\widetilde{X};\mathbb{Z})$.
Finally, we may assume that $\widetilde{\epsilon} < 1$.

The  above equations imply that
\begin{gather*}
\int_{\Hat{X}_j}(\Hat{\sigma}_{j-})_{t}^2=\widetilde{\varphi}_j(t)=\int_{X_j'}(\sigma_{j,t}')^2 
\ \ \forall j, \ \forall  t \in (C_j  - 4 - \widetilde{\epsilon} , C_j  - 4 ); \\
\int_{\Hat{X}_j}(\Hat{\sigma}_{j+})_{t}^2=\widetilde{\varphi}_j(t)=\int_{X_{j+1}'}(\sigma_{j+1,t}')^2 
\ \ \forall j \neq \ell, \ \forall  t \in (C_j  + 4, C_j  + 4 + \widetilde{\epsilon});  \\
\int_{\Hat{X}_\ell}(\Hat{\sigma}_{\ell+})_{t}^2=\widetilde{\varphi}_\ell(t)=\int_{X_1'}(\sigma_{1, t- 2k}')^2 
\ \  \forall t \in (C_\ell  + 4, C_\ell +  4 + \widetilde{\epsilon}).
\end{gather*}
Therefore, by carefully repeatedly applying Proposition \ref{freeunique2}, there exist $\epsilon$ and $\widehat{\epsilon}$ with $0<\epsilon<\widehat{\epsilon}< \widetilde{\epsilon}$ and equivariant symplectomorphisms
\begin{multline*}
\widetilde{M}_j \supset \widetilde{\Psi}^{-1}(C_j  - 4  - \widehat{\epsilon},C_j  - 4 -\epsilon) \\
\stackrel{\simeq}{\longrightarrow} (\Psi_j')^{-1}(C_j  - 4 - \widehat{\epsilon}, C_j  - 4 - \epsilon ) \subset M_j'\quad   \forall j;
\end{multline*}
\vspace{-.2in}
\begin{multline*}
\widetilde{M}_j \supset \widetilde{\Psi}_j^{-1}(C_j  + 4+\epsilon,C_j  + 4  +\widehat{\epsilon})\\
\stackrel{\simeq}{\longrightarrow} (\Psi_{j+1}')^{-1}(C_j  + 4 +\epsilon,C_j  + 4 +\widehat{\epsilon}) \subset M_{j+1}' \quad \forall j \neq \ell;
\end{multline*}
\vspace{-.2in}
\begin{multline*}
\widetilde{M}_\ell \supset \widetilde{\Psi}_\ell^{-1}(C_\ell + 4+\epsilon,C_\ell  + 4  +\widehat{\epsilon})\\
\stackrel{\simeq}{\longrightarrow} (\Psi_{1}')^{-1}(C_0 + 4 +  \epsilon, C_0 +  4 +\widehat{\epsilon}) \subset M_{1}'.
\end{multline*}

Using these equivariant symplectomorphisms, 
we can glue together $\widetilde{\Psi}_j^{-1}(C_j - 4 -\widehat{\epsilon},C_j + 4  +\widehat{\epsilon})\subset \widetilde{M}_j$ 
and $(\Psi_j')^{-1}(C_{j-1} + 4 +\epsilon,C_j - 4 -\epsilon) \subset M_j'$ for all $j \in \{1,\dots,\ell\}$
 to construct a circle action on a closed, connected six-dimensional 
symplectic manifold $(M,\omega)$ with exactly $2k$ fixed points,
and a generalized moment map $\Psi:M \rightarrow \mathbb{R}/2k\Z$. 
The action is not Hamiltonian, because there is no fixed point whose weights are all positive (or all negative). 
Hence, the manifold $M$ satisfies all the requirements. 
\end{proof}

Theorem \ref{theorem10} also follows from a careful reading of the proof above; 
take $\ell = 1$, $k_1 = 5$, and $N = 12$.

\end{document}